\providecommand{\tabularnewline}{\\}
\numberwithin{equation}{section}
\numberwithin{figure}{section}
\theoremstyle{plain}
\newtheorem{thm}{\protect\theoremname}
\theoremstyle{plain}
\newtheorem{prop}[thm]{\protect\propositionname}
\theoremstyle{plain}
\newtheorem{cor}[thm]{\protect\corollaryname}
\theoremstyle{plain}
\newtheorem{lem}[thm]{\protect\lemmaname}
\theoremstyle{remark}
\newtheorem{rem}[thm]{\protect\remarkname}
\providecommand{\corollaryname}{Corollary}
\providecommand{\lemmaname}{Lemma}
\providecommand{\propositionname}{Proposition}
\providecommand{\remarkname}{Remark}
\providecommand{\theoremname}{Theorem}
\providecommand{\corollaryname}{Corollary}
\providecommand{\lemmaname}{Lemma}
\providecommand{\propositionname}{Proposition}
\providecommand{\remarkname}{Remark}
\providecommand{\theoremname}{Theorem}
\begin{document}
\addtolength{\textwidth}{0mm} \addtolength{\hoffset}{0mm} \addtolength{\textheight}{5mm}
\addtolength{\voffset}{-5mm}


\global\long\def\CC{\mathbb{C}}%
\global\long\def\BB{\mathbb{B}}%
\global\long\def\PP{\mathbb{P}}%
\global\long\def\QQ{\mathbb{Q}}%
\global\long\def\RR{\mathbb{R}}%
\global\long\def\FF{\mathbb{F}}%

\global\long\def\DD{\mathbb{D}}%
\global\long\def\NN{\mathbb{N}}%
\global\long\def\ZZ{\mathbb{Z}}%
\global\long\def\HH{\mathbb{H}}%
\global\long\def\Gal{{\rm Gal}}%
\global\long\def\bA{\mathbf{A}}%
\global\long\def\kP{\mathfrak{P}}%
\global\long\def\kQ{\mathfrak{q}}%
\global\long\def\ka{\mathfrak{a}}%
\global\long\def\kP{\mathfrak{p}}%
\global\long\def\cO{\mathcal{O}}%
\global\long\def\cC{\mathfrak{\mathcal{C}}}%
\global\long\def\cM{\mathcal{M}}%
\global\long\def\cK{\mathcal{K}}%
\global\long\def\cN{\mathcal{N}}%
\global\long\def\cP{\mathcal{P}}%
\global\long\def\cH{\mathcal{H}}%
\global\long\def\a{\alpha}%
\global\long\def\b{\beta}%
\global\long\def\d{\delta}%
\global\long\def\D{\Delta}%
\global\long\def\L{\Lambda}%
\global\long\def\g{\gamma}%
\global\long\def\G{\Gamma}%
\global\long\def\d{\delta}%
\global\long\def\D{\Delta}%
\global\long\def\e{\varepsilon}%
\global\long\def\k{\kappa}%
\global\long\def\l{\lambda}%
\global\long\def\m{\mu}%
\global\long\def\o{\omega}%
\global\long\def\p{\pi}%
\global\long\def\P{\Pi}%
\global\long\def\s{\sigma}%
\global\long\def\S{\Sigma}%
\global\long\def\t{\theta}%
\global\long\def\T{\Theta}%
\global\long\def\f{\varphi}%

\global\long\def\deg{{\rm deg}}%
\global\long\def\det{{\rm det}}%

\global\long\def\Dem{Proof: }%
\global\long\def\ker{{\rm Ker}}%
\global\long\def\im{{\rm Im}}%
\global\long\def\rk{{\rm rk}}%
\global\long\def\car{{\rm car}}%
\global\long\def\fix{{\rm Fix( }}%
\global\long\def\card{{\rm Card }}%
\global\long\def\codim{{\rm codim}}%
\global\long\def\coker{{\rm Coker}}%

\global\long\def\pgcd{{\rm pgcd}}%
\global\long\def\ppcm{{\rm ppcm}}%
\global\long\def\la{\langle}%
\global\long\def\ra{\rangle}%

\global\long\def\Alb{{\rm Alb}}%
\global\long\def\Jac{{\rm Jac}}%
\global\long\def\Disc{{\rm Disc}}%
\global\long\def\Tr{{\rm Tr}}%
\global\long\def\Nr{{\rm Nr}}%

\global\long\def\NS{{\rm NS}}%
\global\long\def\Pic{{\rm Pic}}%

\global\long\def\Km{{\rm Km}}%
\global\long\def\rk{{\rm rk}}%
\global\long\def\Hom{{\rm Hom}}%
\global\long\def\End{{\rm End}}%
\global\long\def\aut{{\rm Aut}}%
\global\long\def\SSm{{\rm S}}%
\global\long\def\psl{{\rm PSL}}%
\global\long\def\cu{{\rm (-2)}}%
\global\long\def\mod{{\rm \,mod\,}}%

\title[Generalized Kummer surfaces and Fourier--Mukai partners]{Fourier--Mukai partners and generalized Kummer structures on generalized
Kummer surfaces of order $3$}
\author{Xavier Roulleau, Alessandra Sarti}
\begin{abstract}
A generalized Kummer surface $X$ of order $3$ is the minimal resolution
of the quotient of an abelian surface $A$ by an order $3$ symplectic
automorphism. We study a generalization of a problem of Shioda for
classical Kummer surfaces, which is to understand to what extend $X$
is determined by $A$ and conversely. The surface $X$ posses a big
and nef divisor $L_{X}$ such that $L_{X}^{2}=0$ or $2$ mod $6$;
we suppose it has Picard number $19$. We show that for surfaces with
$L_{X}^{2}=6k$ with $k\neq0,6\mod9$, the surface $X$ determines
the transcendental lattice $T(A)$ of $A$ and the Hodge structure
on $T(A)$. Conversely if $A$ and $B$ are Fourier-Mukai partners
(i.e. if the Hodge structures of their transcendental lattices are
isomorphic) and $Y$ is the generalized Kummer surface which is the
minimal resolution of the quotient of $B$ by an order $3$ symplectic
automorphism, we obtain that $X$ and $Y$ are isomorphic. These results
are also known to hold for surfaces with $L_{X}^{2}=2\mod6$ from
a previous work \cite{RS}. When $k=0\text{ or }6\mod9,$ we show
that $X$ determines $T(A)$ and its Hodge structure, but the converse
does not hold in general. 
\end{abstract}

\maketitle

\section{Introduction}

An (algebraic and complex) generalized Kummer surface $X=\Km_{3}(A)$
of order $3$ is a K3 surface which is the minimal resolution of the
quotient $A/G_{A}$ of an abelian surface $A$ by an order $3$ symplectic
automorphism group $G_{A}$. In \cite{RS,KRS,Roulleau} we study when
there exists another Abelian surface $B$ and order $3$ automorphism
group $G_{B}$ such that $(B,G_{B})$ is not isomorphic to $(A,G_{A})$
but the generalized Kummer surfaces $\Km_{3}(A)$ and $\Km_{3}(B)$
are. That question is a natural extension to generalized Kummer surfaces
of a problem of Shioda on classical Kummer surfaces.

Recall that the quotient surface $A/G_{A}$ has $9$ cups singularities,
each of these being resolved by two $\cu$-curves on $X$. For a very
general abelian surface $A$, the K3 surface $X$ has Picard number
$19$. In the present paper, we will always suppose that the surface
$X$ has Picard number $19$. The positive generator $L_{X}$ of the
orthogonal complement in the Néron-Severi group $\NS(X)$ of the $18$
exceptional curves of the resolution $X\to A/G_{A}$ satisfies $L_{X}^{2}=0$
or $2\mod6$.

Let us also recall that for an abelian surface (respectively a K3
surface) $Y$, a Fourier-Mukai partner of $Y$ is an abelian surface
(respectively K3 surface) $Y'$ such that there is an isomorphism
of Hodge structures 
\[
(T(Y),\CC\o_{Y})\simeq(T(Y'),\CC\o_{Y'}).
\]
Here the transcendental lattice $T(Y)$ is the orthogonal complement
in $H^{2}(Y,\ZZ)$ of the Néron-Severi lattice $\NS(Y)\subset H^{2}(Y,\ZZ)$
and $\o_{Y}$ is a generator of the space of holomorphic $2$-forms.

Let $(A,G_{A})$ and $(B,G_{B})$ be two abelian surfaces with an
order $3$ symplectic automorphism group. Suppose that $X=\Km_{3}(A)$
is a generalized Kummer surface with Picard number $19$. Consider
the following two assertions:\\
 (I) The surface $B$ is a Fourier--Mukai partner of $A$. \\
 (II) The surfaces $\Km_{3}(B)$ and $\Km_{3}(A)$ are isomorphic.
\\
 As shown in \cite{RS}, for generalized Kummer surfaces $X$ with
$L_{X}^{2}=2\mod6$, the assertions (I) and (II) are equivalent. The
aim of this paper is to prove the following result: 
\begin{thm}
\label{thm:Main1}Let $X$ be a generalized Kummer surface with Picard
number $19$ and such that $L_{X}^{2}=6k$, for $k\in\NN^{*}$.\\
 a) Suppose that $k\neq0\text{ and }6\mod9$. Then (I) is equivalent
to (II).\\
 b) Suppose $k=0$ or $6\mod9$. Then (II) implies (I), but (I) does
not implies (II) in general: there exist abelian surfaces with order
$3$ symplectic automorphisms that are Fourier-Mukai partners and
such that the associated generalized Kummer surfaces are not isomorphic. 
\end{thm}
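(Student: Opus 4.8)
The plan is to translate both assertions into statements about transcendental lattices and then apply the global Torelli theorem together with Nikulin's gluing theory. The starting point is the identification of $T(X)$ with $T(A)(3)$ (and of $T(Y)$ with $T(B)(3)$) relating the transcendental lattice of an order-$3$ generalized Kummer surface to that of the abelian surface; this rescales the intersection form by $3$ without altering the Hodge decomposition on $T\o\CC$. A $\ZZ$-linear map is an isometry for the rescaled forms if and only if it is one for the original forms, and rescaling leaves Hodge compatibility untouched, so a Hodge isometry $T(A)\simeq T(B)$ exists if and only if a Hodge isometry $T(X)\simeq T(Y)$ exists. Hence (I) is equivalent to the existence of a Hodge isometry $\f\colon T(X)\to T(Y)$. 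From this the implication (II)$\Rightarrow$(I), which must hold in both cases a) and b), is immediate: an isomorphism $X\simeq Y$ induces a Hodge isometry $H^{2}(X,\ZZ)\simeq H^{2}(Y,\ZZ)$ preserving the N\'eron--Severi and transcendental parts, which therefore restricts to a Hodge isometry $T(X)\simeq T(Y)$; descaling gives (I).

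For the converse (I)$\Rightarrow$(II) I would argue as follows. By the global Torelli theorem for K3 surfaces, $X\simeq Y$ as soon as the Hodge isometry $\f\colon T(X)\to T(Y)$ extends to some Hodge isometry $\Phi\colon H^{2}(X,\ZZ)\to H^{2}(Y,\ZZ)$ (which may then be adjusted by sign and by reflections in classes of $\cu$-curves so as to carry the ample cone of $X$ to that of $Y$). Since $H^{2}$ is the unimodular K3 lattice and $\NS(X)=T(X)^{\perp}$, $\NS(Y)=T(Y)^{\perp}$, extending $\f$ amounts, by Nikulin's theory of discriminant forms, to finding an isometry $\psi\colon\NS(X)\to\NS(Y)$ whose induced map on discriminant groups matches the one induced by $\f$ under the canonical anti-isometries $q_{T(X)}\simeq-q_{\NS(X)}$ and $q_{T(Y)}\simeq-q_{\NS(Y)}$. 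Because $X$ and $Y$ are both generalized Kummer surfaces of order $3$ with $L_{X}^{2}=L_{Y}^{2}=6k$, their N\'eron--Severi lattices are isometric abstract lattices, each an overlattice of $\la 6k\ra\oplus K$, where $K$ is the rank $18$ order-$3$ Kummer lattice spanned by the exceptional curves. So the real question is whether the discriminant automorphism of $q_{\NS(X)}$ prescribed by $\f$ lies in the image of the natural map $O(\NS(X))\to O(q_{\NS(X)})$.

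The heart of part a) is therefore the $3$-adic arithmetic of this discriminant form. The discriminant group of $\NS(X)$ is assembled from the $\ZZ/6k$ carried by $\la L_{X}\ra$ and the ternary $3$-group carried by $K$; its shape at the prime $3$ depends on $k$ through $v_{3}(6k)=1+v_{3}(k)$ together with the way the generator of $\la L_{X}\ra$ is glued to $K$. I expect the dichotomy in the theorem to reflect exactly this: for $k\not\equiv0,6\mod9$ the $3$-component of $q_{\NS(X)}$ is of a type for which $O(\NS(X))\to O(q_{\NS(X)})$ surjects onto the subgroup containing the automorphism prescribed by $\f$, so that $\f$ always extends --- here one invokes Nikulin's surjectivity and extension criteria, available since $\NS(X)$ has signature $(1,18)$ and the discriminant group has small rank at each prime. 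The main obstacle is precisely this computation: determining the glue between $\la 6k\ra$ and $K$ and reading off from the resulting $3$-adic form exactly when the prescribed discriminant automorphism is realizable by an isometry of $\NS(X)$.

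For part b), when $k\equiv0$ or $6\mod9$ the $3$-adic discriminant form should acquire the special shape that produces a discriminant automorphism realized by no isometry of $\NS(X)$, equivalently an extra class in the relevant double coset. I would exploit this to build the counterexamples explicitly: starting from a suitable $T(A)$ realizing the critical $3$-adic type (for instance an abelian surface of product type $E\times E'$ or one with a prescribed transcendental lattice), produce a Fourier--Mukai partner $B$ so that the associated Hodge isometry $T(X)\simeq T(Y)$ is pinned to the non-realizable discriminant automorphism; then $\f$ does not extend to $H^{2}$, and by the global Torelli theorem $X\not\simeq Y$, giving (I) without (II). Checking that $A$ and $B$ are genuine Fourier--Mukai partners while the two Kummer surfaces stay non-isomorphic is the delicate point, and should be settled by the same discriminant-form bookkeeping as in part a), now read in the opposite direction.
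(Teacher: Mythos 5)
Your proposal fails at its first step, and the failure is not repairable within your framework. The identification $T(X)\simeq T(A)(3)$ that you use to translate (I) into the existence of a Hodge isometry $T(X)\simeq T(Y)$ holds only when $L_{X}^{2}=2\mod6$, the case already treated in \cite{RS}. In the situation of this theorem, $L_{X}^{2}=6k$, one only has an isometric embedding $\pi_{A*}\colon T(A)(3)\hookrightarrow T(X)$ whose image has index $3$: the Gram matrices in the paper have determinants $-162k$ for $T(A)(3)$ and $-18k$ for $T(X)$, so these lattices are never isometric here. This is precisely the subtlety the introduction warns about, and it is fatal to your plan: if your opening equivalence ((I) iff there is a Hodge isometry $T(X)\simeq T(Y)$) were correct, then, combined with the fact that for these Picard-number-$19$ surfaces a Hodge isometry of transcendental lattices already forces an isomorphism (the \cite{HLOY2} result quoted in the paper, which makes (II) equivalent to (II')), one would get that (I) and (II) are equivalent for \emph{all} $k$ --- contradicting part b) of the very statement you are proving. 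The genuine content of the proof is the classification of the index-$3$ over-lattices of $T(A)(3)$ that are isometric to $T(X)$: there is exactly one when $k\neq0,6\mod9$, but three when $k=0\mod9$ and two when $k=6\mod9$ (Theorem \ref{thm:NumberOfTX}), and this count is what produces the dichotomy --- not, as you propose, the image of $O(\NS(X))\to O(q_{\NS(X)})$.

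Both halves of your argument therefore aim at the wrong obstruction. Extending a Hodge isometry $T(X)\simeq T(Y)$ to $H^{2}$ is a non-issue here, by the \cite{HLOY2} argument just mentioned; what actually requires proof for (II)$\Rightarrow$(I) is that an isometry $T(X)\to T(Y)$ must carry the sub-lattice $\pi_{A*}T(A)(3)$ onto $\pi_{B*}T(B)(3)$, which the paper establishes by showing that every isometry of $T(X)$ preserves this sub-lattice (Proposition \ref{prop:PreservationOfT(A)(3)}) and, when $k=0$ or $6\mod9$, that the competing over-lattices are all isomorphic \emph{as over-lattices}, via the surjectivity of $O(T(A)(3))\to O(\text{A}_{T(A)(3)})$. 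Your strategy for part b) is moreover structurally impossible: you seek $X\not\simeq Y$ together with a Hodge isometry $T(X)\simeq T(Y)$, but for these surfaces such an isometry already implies $X\simeq Y$. The paper's counterexamples run in the opposite direction: for $k=0$ or $6\mod9$ one takes the several distinct over-lattices $T_{i}$ of $T(A)(3)$, all abstractly isometric to $T(X)$, equips them with one and the same general period $\omega$, and shows that the Hodge structures $(T_{i},\omega)$ are pairwise non-isomorphic (Proposition \ref{prop:KummerNotIso}); by surjectivity of the period map these are realized by generalized Kummer surfaces whose underlying abelian surfaces are Fourier--Mukai partners, giving (I) without (II) because the transcendental Hodge structures of the Kummer surfaces themselves are non-isometric.
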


By \cite{HLOY2}, since $X=\Km_{3}(A)$ has Picard number $19>2+\ell$
(here $\ell$ is the length of the discriminant group of $\NS(X)$,
which is also the length of $T(X)$, and therefore is $\leq3$), if
a K3 surface is a Fourier-Mukai partner of $\Km_{3}(A)$, then it
is isomorphic to $\Km_{3}(A)$. The assertion (II) is therefore equivalent
to :\\
 (II') The surfaces $\Km_{3}(A)$ and $\Km_{3}(B)$ are Fourier-Mukai
partners, i.e. there exists a Hodge isometry 
\[
(T(\Km_{3}(A)),\CC\omega_{\Km_{3}(A)})\simeq(T(\Km_{3}(B)),\CC\omega_{\Km_{3}(B)}).
\]

In case $L_{X}^{2}=2\mod3$, the equivalence between (I) and (II')
(and therefore with (II)) is much simpler. Indeed, it is easy to compare
the Hodge structures, since in that case $T(\Km_{3}(A))$ is isometric
to $T(A)(3)$, (where the $3$ means that the quadratic intersection
form of the lattice is multiplied by $3$). In case $L_{X}^{2}=0\mod6$,
one only knows that $T(\Km_{3}(A))$ contains an index $3$ sub-lattice
isometric to $T(A)(3)$ and that subtlety makes the proof much more
involved.

A generalized Kummer structure on the generalized Kummer surface $X$
is an isomorphism class of pairs $(B,G_{B})$ such that the associated
generalized Kummer surface $\Km_{3}(B)$ is isomorphic to $X=\Km_{3}(A)$.
The generalized problem of Shioda can be rephrased as the problem
to understand if there is a unique generalized Kummer structure. Theorem
\ref{thm:Main1} shows that a generalized Kummer structure $\{(B,G_{B})\}$
on $X$ is such that $B$ is a Fourier-Mukai partner of $A$, for
$X$ such that $L_{X}^{2}\neq0,6\mod9$. Theorem \ref{thm:Main1}
is a key-result in order to compute the number of generalized Kummer
structures in \cite{Roulleau} according to the value of $L_{X}^{2}$,
for $L_{X}^{2}\neq0,6\mod9$.

In \cite{HLOY,HLOY2}, Hosono, Lian, Oguiso and Yau study the analogous
problem for the classical Kummer surfaces (and compute the number
of Kummer structures of some Kummer surfaces). For these surfaces,
one always has $T(\Km(A))=T(A)(2)$, and the equivalence between the
analog of (I) and (II) for Kummer surfaces is immediate. The fact
that for generalized Kummer surfaces (I) does not imply (II) is all
the more noteworthy.

The paper is structured as follows: In Section 2, we give preliminaries
and notations on lattice theory. In Section 3, we explain how we proceed
to prove Theorem \ref{thm:Main1}. One wants in particular to know
the over-lattices of $T(A)(3)$ which are isomorphic to $T(X)$. Sections
4 and 5 are devoted to compute these over-lattices. Section 6 is a
proof of Theorem \ref{thm:Main1} when $k\neq0,6\mod9$, and section
7 deals with the remaining cases.

\textbf{Acknowledgements.} The authors thank Simon Brandhorst and
Igor Reider for useful conversations. X.R. acknowledges support from
Centre Henri Lebesgue ANR-11-LABX-0020-01.

\section{Preliminaries on lattices and notations }

The following section gives preliminaries and notations on lattice
theory, standard references are \cite[Section 1]{Nikulin} and \cite{CS}.
Sections \ref{subsec:The-discriminant-group} and \ref{subsec:Examples-of-quadratic},
on the torsion quadratic modules of the form $(\frac{u}{v})$, are
well-known results, but recalling it here fixes the notations and
gives the level of details we need. In Section \ref{subsec:Genus-of-lattice,},
we recall some lattice-theoretic results of Nikulin.

\subsection{\label{subsec:The-discriminant-group}The discriminant group of a
lattice as a torsion quadratic module}

Let $L$ be an even lattice ; the intersection of two elements $v,v'\in L$
is denoted by $vv'\in\ZZ$. The bilinear pairing extends to $L\otimes\QQ$,
and the dual of $L$ is 
\[
\check{L}:=\{v\in L\otimes\QQ\,|\,\forall w\in L,\,vw\in\ZZ\}.
\]
The discriminant group of $L$ is a torsion quadratic module, denoted
by $\text{A}_{L}$: its subjacent group is the finite abelian group
$\check{L}/L$, and its quadratic form $q_{L}:\check{L}/L\to\QQ/2\ZZ$
is defined for $\frac{1}{n}w\in\check{L}$ (for $n\in\ZZ,n\neq0$
and $w\in L$ such that $\frac{1}{n}w\in\check{L}$) by 
\[
q_{L}(\frac{1}{n}w)=\frac{1}{n^{2}}w^{2}\mod2\ZZ.
\]
If $v\in L$, one has 
\[
(\frac{1}{n}w+v)^{2}=\frac{1}{n^{2}}w^{2}+2\frac{1}{n}wv+v^{2}.
\]
Since $\frac{1}{n}w\in\check{L}$, $2\frac{1}{n}wv$ is an even integer
and since $L$ is even, one has $v^{2}\in2\ZZ$, thus the quadratic
form $q_{L}$ is a well defined function on the finite group $\check{L}/L$.

Let $L$ be the lattice $\ZZ^{n}$ with Gram matrix $G$ for the canonical
basis. The columns $c_{1},\dots,c_{n}$ of $G^{-1}$ considered as
elements of $\QQ^{n}/\ZZ^{n}$ generate the group $\text{A}_{L}$,
moreover the quadratic form $q_{L}$ on these generators is given
by the matrix $G^{-1}=(^{t}c_{i}Gc_{j})_{1\leq i,j\leq n}$, where
the diagonal entries of $G^{-1}$ are taken modulo $2\ZZ$, and the
other entries are taken modulo $\ZZ$ (and respecting the symmetry).

\subsection{\label{subsec:Genus-of-lattice,}Genus of lattice, discriminant group
and over-lattices}

By definition, two lattices $L_{1},L_{2}$ are in the same genus if
$L_{1}\otimes\QQ_{p}\simeq L_{2}\otimes\QQ_{p}$ for all primes $p$
and $L_{1}\otimes\RR\simeq L_{2}\otimes\RR$. One has 
\begin{thm}
(\cite[Corollary 1.9.4]{Nikulin}) The lattices $L_{1},L_{2}$ are
in the same genus if and only if they have the same signature and
have isometric discriminant groups. 
\end{thm}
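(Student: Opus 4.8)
The plan is to prove both implications by reducing to purely local data, at each prime $p$ and at the real place, and to exploit the fact that the discriminant form $q_{L}$ decomposes canonically as an orthogonal direct sum $q_{L}=\bigoplus_{p}(q_{L})_{p}$ over the primes $p$, where $(q_{L})_{p}$ is the restriction of $q_{L}$ to the $p$-Sylow subgroup of $A_{L}$ and depends only on the $\ZZ_{p}$-lattice $L\otimes\ZZ_{p}$. One direction is then immediate: if $L_{1}$ and $L_{2}$ lie in the same genus, then $L_{1}\otimes\RR\simeq L_{2}\otimes\RR$ forces equal signature by Sylvester's law of inertia, while the local isometries $L_{1}\otimes\ZZ_{p}\simeq L_{2}\otimes\ZZ_{p}$ yield $(q_{L_{1}})_{p}\simeq(q_{L_{2}})_{p}$ for every $p$, and summing over $p$ gives $q_{L_{1}}\simeq q_{L_{2}}$. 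So the substance lies entirely in the converse, which I would attack by showing that the signature together with $q_{L}$ reconstructs each local lattice $L\otimes\ZZ_{p}$ up to isometry.

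First I would treat an odd prime $p$. Here I use the Jordan decomposition $L\otimes\ZZ_{p}\cong\bigoplus_{k\geq0}p^{k}U_{k}$ with each $U_{k}$ unimodular (diagonalizable over $\ZZ_{p}$ since $p\neq2$). The scaled blocks $p^{k}U_{k}$ with $k\geq1$ are exactly what contributes to the discriminant group, and their isometry type is read off directly from $(q_{L})_{p}$; the unimodular block $U_{0}$ contributes nothing to $A_{L}$ and is pinned down by its rank and determinant. Its rank is the total rank of $L$ minus the ranks of the higher blocks, and its determinant square-class in $\QQ_{p}^{\times}/(\QQ_{p}^{\times})^{2}$ is recovered globally: the $p$-adic valuation of $\det L$ equals the $p$-part of the order $|A_{L}|$, which is visible in $(q_{L})_{p}$, and the sign of $\det L$ is $(-1)^{t_{-}}$, coming from the signature. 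Since over $\ZZ_{p}$ with $p$ odd a unimodular lattice is determined by its rank and determinant, this fixes $U_{0}$ and hence $L\otimes\ZZ_{p}$.

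The hard part will be the prime $p=2$, which is the main obstacle. Over $\ZZ_{2}$ quadratic forms need not diagonalize, so the even binary blocks of type $\bigl(\begin{smallmatrix}0&1\\1&0\end{smallmatrix}\bigr)$ and $\bigl(\begin{smallmatrix}2&1\\1&2\end{smallmatrix}\bigr)$ must be carried along, the even/odd type of each Jordan constituent matters, and the determinant and signature interact only modulo $8$. The key tool bridging the $2$-adic data and the global signature is Milgram's formula
\[
\sum_{x\in A_{L}}\exp\bigl(\pi i\,q_{L}(x)\bigr)=\sqrt{|A_{L}|}\,\exp\Bigl(\tfrac{\pi i}{4}\,\mathrm{sign}(L)\Bigr),
\]
which shows that $\mathrm{sign}(L)\bmod 8$ is determined by $q_{L}$ and, conversely, supplies the congruence needed to recover the $2$-adic oddity. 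Combining this with the $2$-adic Jordan decomposition and the classification of $\ZZ_{2}$-quadratic forms by rank, determinant, Hasse invariant and oddity, I would show that $(q_{L})_{2}$ together with the rank and $\mathrm{sign}(L)\bmod 8$ reconstructs $L\otimes\ZZ_{2}$.

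Finally I would assemble the pieces. Given $L_{1}$ and $L_{2}$ of equal signature with $q_{L_{1}}\simeq q_{L_{2}}$, the reconstructions above produce isometries $L_{1}\otimes\ZZ_{p}\simeq L_{2}\otimes\ZZ_{p}$ at every prime $p$, while equality of signature gives $L_{1}\otimes\RR\simeq L_{2}\otimes\RR$; hence $L_{1}$ and $L_{2}$ lie in the same genus, completing the equivalence.
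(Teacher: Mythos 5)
The paper offers no proof of this statement at all: it is imported verbatim from \cite[Corollary 1.9.4]{Nikulin}, so the only meaningful benchmark is Nikulin's own argument (or the equivalent treatment in \cite[Chapter 15]{CS}). Your outline follows exactly that standard route: the forward direction by noting that $(q_{L})_{p}$ depends only on $L\otimes\ZZ_{p}$ and the signature only on $L\otimes\RR$; the converse by reconstructing each completion from the discriminant form together with the rank and the determinant $(-1)^{t_{-}}|A_{L}|$. Your easy direction and your odd-prime reconstruction are correct as stated.

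The gap is at $p=2$, which you rightly flag as the hard part but then only promise to handle, and the specific plan you sketch would not go through literally. First, the $2$-adic Jordan decomposition is not unique: distinct Jordan symbols related by sign walking and oddity fusion give isometric $\ZZ_{2}$-lattices and isometric discriminant forms, so no procedure can ``read off'' the Jordan constituents from $(q_{L})_{2}$ the way it can for odd $p$. Second, the invariants you name --- rank, determinant, Hasse invariant and oddity --- classify quadratic \emph{spaces} over $\QQ_{2}$, not $\ZZ_{2}$-lattices; two non-isometric $\ZZ_{2}$-lattices can have isometric rationalizations, so this toolkit cannot pin down $L\otimes\ZZ_{2}$. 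What the standard proof actually uses here is a stable statement (Nikulin's Propositions 1.8.1--1.8.2 and Theorem 1.9.1, going back to Wall and Kneser): two even $\ZZ_{2}$-lattices with isometric discriminant forms become isometric after adding even unimodular $\ZZ_{2}$-lattices, and even unimodular $\ZZ_{2}$-lattices are classified by rank and determinant; since your hypotheses force equal ranks and equal determinants $(-1)^{t_{-}}|A_{L_{i}}|$, this yields $L_{1}\otimes\ZZ_{2}\simeq L_{2}\otimes\ZZ_{2}$. Milgram's formula, which you invoke as the key bridge, is in fact the tool for the complementary \emph{existence} question (which signatures mod $8$ are compatible with a given finite quadratic form); it is not needed for the uniqueness you are after, since equality of signatures mod $8$ is already part of your hypothesis. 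So: right architecture, matching the standard proof, but the $2$-adic kernel of the argument needs Wall's classification of finite quadratic forms or Nikulin's stable equivalence theorem, not the invariants you list.
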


Any over-lattice $L_{H}$ of $L$ is (up to isometry) the pull-back
$L_{H}=\pi^{-1}(H)$ by the quotient map $\pi:\check{L}\to\check{L}/L$
of an isotropic subgroup $H$ contained in $\text{A}_{L}=\check{L}/L$.

An isometry $g\in O(L)$ induces an isometry $\bar{g}$ of $\text{A}_{L}$.
If $H_{1}\subset\text{A}_{L}$ is an isotropic sub-group, so is $H_{2}=\bar{g}(H_{1})$
and $g$ induces an isometry between the over-lattices $L_{H_{1}}$
and $L_{H_{2}}$ preserving the lattice $L$. More generally, two
over-lattices $\iota_{1}:L\hookrightarrow L_{1},\iota_{2}:L\hookrightarrow L_{2}$
are said isomorphic if there exists an isometry $g$ of $L$ extending
to an isometry $\tilde{g}$ between $L_{1}$ and $L_{2}$. Then the
following diagram 
\[
\begin{array}{ccc}
L & \stackrel{\iota_{1}}{\to} & L_{1}\\
\downarrow g &  & \downarrow\tilde{g}\\
L & \stackrel{\iota_{2}}{\to} & L_{2}
\end{array}
\]
is commutative. Let $H_{1},H_{2}$ be the isotropic sub-groups of
$\text{A}_{L}$ corresponding to over-lattices $L_{1},L_{2}$. One
has: 
\begin{prop}
\label{prop:().-Nikulin1Over}(\cite[Proposition 1.4.3]{Nikulin}).
The over-lattices $\iota_{1}:L\hookrightarrow L_{1},\iota_{2}:L\hookrightarrow L_{2}$
are isomorphic if and only if there exist an isometry $g$ of $L$
such $\bar{g}(H_{1})=H_{2}$. 
\end{prop}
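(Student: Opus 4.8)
The plan is to realize every over-lattice concretely as a sublattice of $L\otimes\QQ$ lying between $L$ and $\check{L}$, and then to exploit the rigidity of $\QQ$-linear maps, which are determined by their restriction to $L$. First I would recall the standing facts. Since $L_1,L_2$ are over-lattices, by the correspondence stated above they are $L_i=\pi^{-1}(H_i)\subseteq\check{L}$, so that $L\subseteq L_i\subseteq\check{L}$ and $H_i=L_i/L$ inside $\text{A}_L=\check{L}/L$. I would also use the already-stated fact that any isometry $g\in O(L)$ extends to a $\QQ$-linear isometry $g_{\QQ}$ of $(L\otimes\QQ,\cdot)$; because $g_{\QQ}$ preserves the pairing and maps $L$ onto $L$, it carries $\check{L}$ onto $\check{L}$ and descends to the induced isometry $\bar{g}$ of $\text{A}_L$. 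The single observation driving both implications is the following rigidity: $L$ spans $L\otimes\QQ$ over $\QQ$, so a $\QQ$-linear map on $L\otimes\QQ$ is determined by its values on $L$; consequently any isometry $\tilde{g}\colon L_1\to L_2$ that extends $g$ must coincide with $g_{\QQ}|_{L_1}$.

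For the forward implication, suppose the over-lattices are isomorphic, so there is $g\in O(L)$ and an extension $\tilde{g}\colon L_1\to L_2$ making the diagram commute. By the rigidity observation $\tilde{g}=g_{\QQ}|_{L_1}$, hence $g_{\QQ}(L_1)=L_2$. Taking the image under $\pi$ and using $g_{\QQ}(L)=L$, I obtain $\bar{g}(H_1)=\bar{g}(L_1/L)=L_2/L=H_2$, as required. For the converse, suppose $g\in O(L)$ satisfies $\bar{g}(H_1)=H_2$. Lifting through $\pi$, this says $g_{\QQ}(L_1)=g_{\QQ}(\pi^{-1}(H_1))=\pi^{-1}(H_2)=L_2$, so $\tilde{g}:=g_{\QQ}|_{L_1}$ is a bijective $\QQ$-linear isometry from $L_1$ onto $L_2$ restricting to $g$ on $L$; this is exactly an isomorphism of over-lattices, and the diagram commutes by construction.

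The argument is essentially formal once the over-lattices are viewed inside the common ambient space $L\otimes\QQ$; the only point requiring care — and the place where a careless proof could slip — is checking that an abstract isometry $\tilde{g}$ of over-lattices extending $g$ is forced to be the restriction of $g_{\QQ}$, rather than merely some isometry compatible with $g$. This is handled by the spanning remark above: both $\tilde{g}$ and $g_{\QQ}$ are $\QQ$-linear and agree on $L$, which generates $L_1\otimes\QQ=L\otimes\QQ$. Beyond this bookkeeping of the quotient maps, I do not expect any genuine obstacle.
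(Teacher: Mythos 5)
Your proof is correct. Note that the paper does not actually prove this statement: it is quoted directly from Nikulin (Proposition 1.4.3), so the only comparison available is with the standard argument behind that citation, and yours is exactly it --- realize both over-lattices inside $\check{L}\subset L\otimes\QQ$ via $L_{i}=\pi^{-1}(H_{i})$, and use the rigidity fact that any isometry $\tilde{g}\colon L_{1}\to L_{2}$ extending $g$ must equal $g_{\QQ}|_{L_{1}}$ because $L$ spans $L\otimes\QQ$. You correctly isolate that rigidity step as the crux; without it the forward implication would not follow.
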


Let $M$ be a lattice containing $L$ with finite index, let $H_{1},\dots,H_{n}$
be the set of all isotropic sub-groups of the (finite) group $\text{A}_{L}$
such that their associated over-lattices $L_{1},\dots,L_{n}$ are
isometric to $M$. Let $\iota_{j}:L\hookrightarrow L_{j}$ be the
inclusion of $L$ in $L_{j}$. 
\begin{cor}
\label{cor:Unique-embedd-up-to-isom}Under the setting above, suppose
that the over-lattices $\iota_{j}:L\hookrightarrow L_{j}$, $j=1,\dots,n$
are isomorphic. Then there exist a unique embedding of $L$ in $M$
up to isometries of $L$ and $M$. 
\end{cor}

\begin{proof}
Let $\iota:L\hookrightarrow M$, $\iota':L\hookrightarrow M$ be two
embeddings, let $H,H'$ be the two isotropic sub-groups of $L$ corresponding
to these over-lattices. There exist two indices $s,t\in\{1,\dots,n\}$
such that (up to isometries) $\iota:L\hookrightarrow M$ is isomorphic
to $\iota_{s}:L\hookrightarrow L_{s}$ and $\iota':L\hookrightarrow M$
is isomorphic to $\iota_{t}:L\hookrightarrow L_{t}$, the over-lattices
$L_{s},L_{t}$ corresponding to $H_{s}$ and $H_{t}$ respectively.
Since by hypothesis $\iota_{s}:L\hookrightarrow L_{s}$ and $\iota_{t}:L\hookrightarrow L_{t}$
are isomorphic, the over-lattices $\iota:L\hookrightarrow M$, $\iota':L\hookrightarrow M$
are isomorphic, thus there exist isometries $g$ of $L$ and $\tilde{g}$
of $M$ such that $\tilde{g}\circ\iota=g\circ\iota'$. 
\end{proof}

\subsection{\label{subsec:Examples-of-quadratic}Examples of quadratic torsion
modules }

In this section, we give some examples of quadratic torsion modules
which we will later use to compute the discriminant groups of various
lattices.

Consider $\frac{u}{v}\in\QQ\setminus\{0\}$ with $u,v$ coprime such
that either $u$ or $v$ is even. Let us denote by $(\frac{u}{v})$
the torsion quadratic module $(\ZZ/v\ZZ,q)$ with quadratic form $q:\ZZ/v\ZZ\to\QQ/2\ZZ$
defined by 
\[
q(x)=\frac{ux^{2}}{v}\in(\frac{1}{v}\ZZ)/2\ZZ\subset\QQ/2\ZZ.
\]
If $x'=x+tv$ with $t\in\ZZ$, one has 
\[
q(x')=\frac{u(x^{2}+2xtv+t^{2}v^{2})}{v}=\frac{ux^{2}}{v}+u(2xt+t^{2}v)=\frac{ux^{2}}{v}=q(x),
\]
thus $q$ is well-defined; here we use that $u$ or $v$ is even at
the third equality (if both were odd, the form $q$ would be well-defined
only modulo $\ZZ$). For $u,u'$ coprime to $v$, one has 
\[
(\frac{u}{v})=(\frac{u'}{v})
\]
if and only if $u=u'\mod2v$.

Suppose that $v=ab$ with $a,b$ coprime integers, and let $s,t\in\ZZ$
such that $as+bt=1$. If $(s',t')$ is another solution, one has $s'=s+mb$
and $t'=t-ma$ for $m\in\ZZ$.

$\bullet$ When $u$ is even, (thus $a$ and $b$ are odd). Then,
since $u$ is even, one has $us=us'\mod2b$, $ut=ut'\mod2a$ and the
torsion quadratic modules 
\[
(\frac{us}{b}),\,\,(\frac{ut}{a})
\]
are independent of the choice of the solution $(s,t)$.

$\bullet$ When $u$ is odd and $a$, say, is even, $b$ is odd. The
relation $as+bt=1$ implies that $t$ is odd. Moreover, since $b$
is odd, up to changing $s$ by $s+mb$ for $m\in\ZZ$, one can choose
$s$ to be even, and in fact one must do it in order for the quadratic
form of $(\frac{us}{b})$ to have values in $\ZZ/2\ZZ$. That choice
of $s$ is unique modulo $2b$. The choice of $t$ is then unique
modulo $2a$, and in the notations: $(\frac{us}{b}),(\frac{ut}{a})$,
it is always implicitly supposed that $s$ is even.

In both cases $u$ even or odd, the relation 
\[
\frac{u}{ab}=\frac{tu}{a}+\frac{su}{b}
\]
implies that $(\frac{u}{v})$ decomposes as a direct sum 
\[
(\frac{u}{v})=(\frac{tu}{a})+(\frac{su}{b})
\]
where we use the canonical isomorphism $x\mod ab\to(x\,\mod a,x\mod b)$
between $\ZZ/ab\ZZ$ and $\ZZ/a\ZZ\oplus\ZZ/b\ZZ$ (we recall that
$a,b$ are coprime integers). For $v=ab$ with $ab$ coprime, we denote
by $(\frac{u}{v})_{a}=(\frac{tu}{a})$ the restriction of $(\frac{u}{v})$
to $\ZZ/a\ZZ$. Since $s$ is even, in $\ZZ/2a\ZZ$ one has $\bar{b}\bar{t}=\bar{1}$,
thus we sometimes write $(\frac{u}{v})_{a}=(\frac{u/b}{a})$.

The decomposition $(\frac{u}{ab})=(\frac{tu}{a})+(\frac{su}{b})$
implies that if $v=p_{1}^{n_{1}}\dots p_{k}^{n_{k}}$ is the factorization
of $v$ into a product of distinct primes, the torsion quadratic module
$(\frac{u}{v})$ is isometric to a direct sum 
\[
(\frac{u_{1}}{p_{1}^{n_{1}}})+\dots+(\frac{u_{k}}{p_{k}^{n_{k}}})
\]
for some integers $u_{i}$ coprime to $p_{i}$ and such that $u_{i}$
or $p_{i}$ is even. More generally, using that a finite abelian group
is the direct sum of its $p$-sub-groups, a torsion quadratic module
$M$ is a direct sum $M=\oplus_{p\,\text{prime}}M_{p}$ of $p$-sub-groups.
For $p\geq3$, $M_{p}$ is a direct sum of torsion quadratic modules
of the form $(\frac{u}{p^{n}})$ (for $u$ even, coprime to $p$ and
$n\in\NN^{*}$). For $p=2$, the situation is more complicated, see
\cite{CS}.

Let be $w\in\ZZ$ such that $\bar{w}\in\ZZ/v\ZZ$ is a unit. For any
$n\in\ZZ$, one has 
\[
u(w+nv)^{2}=uw^{2}+2nuwv+un^{2}v^{2}=uw^{2}\mod2v,
\]
(where in the last equality we use that $u$ or $v$ is even), thus
group of units $(\ZZ/v\ZZ)^{*}$ acts on the set of torsion quadratic
modules $(\frac{u}{v})$ by $\bar{w}.(\frac{u}{v})=(\frac{uw^{2}}{v})$.
One has 
\[
\forall x\in\ZZ/v\ZZ,\,\,\frac{uw^{2}}{v}x^{2}=\frac{u}{v}(\bar{w}x)^{2},
\]
therefore the automorphism of $\ZZ/v\ZZ$ defined by $x\to\bar{w}x$
is an isometry between the torsion quadratic modules $\bar{w}.(\frac{u}{v})$
and $(\frac{u}{v})$. Conversely, if there is an isometry between
$(\frac{u}{v})$ and $(\frac{u'}{v})$, the subjacent automorphism
$\psi$ of $\ZZ/v\ZZ$ being of the form $x\to\bar{w}x$ for some
$\bar{w}\in(\ZZ/v\ZZ)^{*}$, one has $u'=uw^{2}\mod2v$, thus $u'=uw^{2}\mod v$
and $u/u'\in(\ZZ/v\ZZ)^{*}$ is a square.

Suppose that $v$ is an odd prime. The set $S_{q}$ of square elements
in $(\ZZ/v\ZZ)^{*}$ is a sub-group of index $2$. Let $m_{2}:\ZZ/v\ZZ\to2\ZZ/\ZZ2v\ZZ$
be the map defined by $x+v\ZZ\to2x+2v\ZZ$; it is a bijection. Let
$E$ be the set 
\[
E=m_{2}((\ZZ/v\ZZ)^{*}).
\]
The set $E$ is in bijection with the set of quadratic torsion modules
by the map $u\in E\to(\frac{u}{v})$. There is a natural faithful
action of $S_{q}$ on $E$ given by 
\[
(s,u)\in S_{q}\times E\to m_{2}(m_{2}^{-1}(u)s)\in E.
\]
That action has two orbits, corresponding to the invertible square,
and invertible non-square in $S_{q}$. Thus there are only two isometry
classes among the torsion quadratic modules $(\frac{u}{v})$, $u\in E$.
For example, the only torsion quadratic modules on $\ZZ/3^{a}\ZZ$
up to isometry are $(\frac{2}{3^{a}})$ and $(\frac{4}{3^{a}})$ (for
$a\geq1$).

\section{Fourier-Mukai partners in case $L_{X}^{2}=0\protect\mod6$}

Suppose that the generalized Kummer surface $X=\Km_{3}(A)$ satisfies
$L_{X}^{2}=6k$. Then (see \cite[Theorem 7]{Roulleau}), there exists
a basis of $T(A)$ with Gram matrix 
\[
\left(\begin{array}{ccc}
-2k & 0 & 0\\
0 & 2 & 3\\
0 & 3 & 6
\end{array}\right),
\]
therefore $T(A)(3)$ has a basis with Gram matrix 
\begin{equation}
\left(\begin{array}{ccc}
-6k & 0 & 0\\
0 & 6 & 9\\
0 & 9 & 18
\end{array}\right).\label{eq:nom1}
\end{equation}
Let $T(X)$ be the transcendental lattice of $X$: this is the orthogonal
complement of $\NS(X)$ in $H^{2}(X,\ZZ)$. 
\begin{prop}
\label{prop:T(X)-unique-in-genus}The lattice $T(X)$ has a basis
with Gram matrix 
\[
\left(\begin{array}{ccc}
-6k & 0 & 0\\
0 & 6 & 3\\
0 & 3 & 2
\end{array}\right).
\]
It is unique in its genus. The discriminant group $\text{A}_{T(X)}$
is generated by $(\frac{1}{6k},0,0),\,\,(0,\frac{2}{3},-1)$ and is
isomorphic to $\ZZ/6k\ZZ\times\ZZ/3\ZZ$. 
\end{prop}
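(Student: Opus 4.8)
Write $L:=T(A)(3)$ and let $e_{1},e_{2},e_{3}$ be the basis with Gram matrix \eqref{eq:nom1}, so that $e_{1}^{2}=-6k$, $e_{2}^{2}=6$, $e_{3}^{2}=18$, $e_{2}e_{3}=9$ and $e_{1}e_{2}=e_{1}e_{3}=0$. The geometric input is that $L$ embeds into $T(X)$ with index $3$, so $T(X)$ is one of the even over-lattices attached by \S2.2 to an isotropic order-$3$ subgroup of $\text{A}_{L}$. The plan is first to realize the asserted lattice as such an over-lattice, then to show it is forced by its genus. Computing $\text{A}_{L}$ by the recipe of \S2.1 gives a group of order $|\det|=162k$; the block $\left(\begin{smallmatrix}6&9\\9&18\end{smallmatrix}\right)$ has Smith normal form $(3,9)$, so its part is $\ZZ/3\ZZ\times\ZZ/9\ZZ$. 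The element $\tfrac{1}{3}e_{3}$ lies in $\check{L}$ (it pairs integrally with $e_{1},e_{2},e_{3}$), has order $3$ modulo $L$, and is isotropic because $q_{L}(\tfrac{1}{3}e_{3})=\tfrac{1}{9}e_{3}^{2}=2\equiv 0\bmod 2\ZZ$. Adjoining it yields the over-lattice with basis $e_{1},e_{2},\tfrac{1}{3}e_{3}$, whose Gram matrix is precisely the one asserted, since $e_{2}\cdot\tfrac{1}{3}e_{3}=3$ and $(\tfrac{1}{3}e_{3})^{2}=2$.

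I would then read off the discriminant group directly from this Gram matrix by the method of \S2.1. Its inverse is the block matrix with $-\tfrac{1}{6k}$ in the first entry and $\left(\begin{smallmatrix}2/3&-1\\-1&2\end{smallmatrix}\right)$ in the lower block; the first column gives a generator $(\tfrac{1}{6k},0,0)$ of order $6k$, the second gives $(0,\tfrac{2}{3},-1)$ of order $3$, and the third column is integral, hence trivial in $\check{L}/L$. As these two generators are supported on complementary summands, $\text{A}_{T(X)}\cong\ZZ/6k\ZZ\times\ZZ/3\ZZ$, of order $18k=|\det|$, as claimed.

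The real content is the uniqueness in the genus, and it is also what certifies that the over-lattice just built is $T(X)$. Indeed $T(X)=\NS(X)^{\perp}$ in the unimodular lattice $H^{2}(X,\ZZ)$, so $\text{A}_{T(X)}$ is anti-isometric to $\text{A}_{\NS(X)}$; since $\NS(X)$ is known from the resolution $X\to A/G_{A}$, the discriminant form of $T(X)$ is determined and matches the one computed above, while the signature is $(2,1)$ (the binary block is positive definite and $e_{1}^{2}<0$). Thus $T(X)$ and the over-lattice lie in the same genus, and it remains to see that this genus holds a single isometry class. Here the cheap criterion fails: the lattice is $\langle-6k\rangle\oplus A_{2}$ (the binary block has determinant $3$ and minimal norm $2$, hence is $A_{2}$) of rank $3$, whereas $\ell(\text{A}_{T(X)})=2$ because $\ZZ/6k\ZZ\times\ZZ/3\ZZ$ is non-cyclic, so the Nikulin bound $\rk\geq\ell+2$ is unavailable. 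I would instead invoke Eichler's theorem: the form is indefinite of rank $\geq 3$, so isometry classes in the genus coincide with spinor genera, and one is reduced to showing there is a single spinor genus. This I would settle by a local computation of the spinor-norm groups at the primes $2$, $3$ and those dividing $k$, reading the $p$-adic data off the torsion-quadratic-module decomposition of \S2.3. This spinor-genus count, uniformly in $k$, is the step I expect to be the main obstacle; everything else is bookkeeping.
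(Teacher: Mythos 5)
Your construction of the index-$3$ over-lattice and the discriminant computation are correct and constitute the part the paper dismisses as ``a direct computation'': adjoining $\tfrac{1}{3}e_{3}$ to $T(A)(3)$ gives an even over-lattice with exactly the stated Gram matrix, and reading the columns of its inverse gives the generators $(\tfrac{1}{6k},0,0)$, $(0,\tfrac{2}{3},-1)$ and the group $\ZZ/6k\ZZ\times\ZZ/3\ZZ$. The two assertions that carry the weight of the proposition, however, are not proved in your proposal. First, to identify $T(X)$ itself (rather than merely \emph{some} index-$3$ over-lattice of $T(A)(3)$) with this lattice, you argue that the discriminant form of $T(X)$ ``is determined and matches'' because $q_{T(X)}\simeq-q_{\NS(X)}$ and ``$\NS(X)$ is known from the resolution''. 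That clause hides the real work: $\NS(X)$ is not the lattice spanned by $L_{X}$ and the $18$ exceptional curves, but a nontrivial finite-index over-lattice of it (glue vectors coming from the triple cover), and determining that over-lattice and its discriminant form is precisely the content of the reference \cite[Section 2.4]{Roulleau} that the paper cites for the first two assertions; it is also where the dichotomy $L_{X}^{2}=0$ versus $2\mod6$ originates. Without this computation the genus of $T(X)$ is not pinned down, so the claimed matching is unverified.

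Second, uniqueness in the genus --- which you yourself call ``the real content'' --- is left as a plan rather than a proof. You correctly observe that Nikulin's criterion is unavailable here ($\rk=3$, $\ell(\text{A}_{T(X)})=2$, so $\rk\geq\ell+2$ fails), and you correctly name a viable strategy (Eichler: genus $=$ spinor genera for indefinite rank $\geq3$, then show there is one spinor genus), but you then defer the local computation ``uniformly in $k$'' as ``the step I expect to be the main obstacle''. That step \emph{is} the proposition; a proof cannot stop there, especially since for special $k$ such statements can genuinely fail, so something about this particular form must be used. Note that the paper itself shows you the practical way to finish: for the closely analogous lattice $T(A)(3)$ it invokes the Miranda--Morrison regularity criteria \cite[Chapter VIII, Lemmas 7.6, 7.7 and Theorem 7.5(4)]{MirMor} in Proposition \ref{prop:T(A)(3)UniqueGenusOrth} to get both uniqueness in the genus and surjectivity of $O\to O(\text{A})$; the same machinery applies to $\langle-6k\rangle\oplus A_{2}$ and would close your gap, but the $p$-adic verification (or the equivalent spinor-norm computation) must actually be carried out.
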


\begin{proof}
See \cite[Section 2.4]{Roulleau} for the first two assertions; the
last one is clear by the shape of the Gram matrix. 
\end{proof}
One has 
\begin{prop}
There exists a morphism $\pi_{A*}:T(A)(3)\to T(X)$ such that $\pi_{A*}$
is an isometry onto its image and $\pi_{A*}(T(A)(3))$ has index $3$
in $T(X)$. 
\end{prop}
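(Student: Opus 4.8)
The plan is to exhibit the isometry $\pi_{A*}$ explicitly in terms of the given bases. We are given a Gram matrix for $T(A)(3)$ in (\ref{eq:nom1}) and a Gram matrix for $T(X)$ in Proposition \ref{prop:T(X)-unique-in-genus}; call their ordered bases $(a_{1},a_{2},a_{3})$ and $(b_{1},b_{2},b_{3})$ respectively. First I would observe that the first basis vectors already match: $a_{1}^{2}=-6k=b_{1}^{2}$ and both are orthogonal to the rank-$2$ part, so one may set $\pi_{A*}(a_{1})=b_{1}$ and concentrate on the positive-definite rank-$2$ block. There the problem reduces to embedding the lattice with Gram matrix $\left(\begin{smallmatrix}6 & 9\\ 9 & 18\end{smallmatrix}\right)$ as an index-$3$ sublattice of the lattice with Gram matrix $\left(\begin{smallmatrix}6 & 3\\ 3 & 2\end{smallmatrix}\right)$.

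The key step is to find the integer matrix realizing this embedding. I would look for vectors $v_{2},v_{3}$ in the $T(X)$-block, written as integer combinations of $b_{2},b_{3}$, whose Gram matrix equals $\left(\begin{smallmatrix}6 & 9\\ 9 & 18\end{smallmatrix}\right)$ and which span an index-$3$ sublattice. A natural candidate is to take $\pi_{A*}(a_{2})=b_{2}$ (since $b_{2}^{2}=6=a_{2}^{2}$) and then solve for $\pi_{A*}(a_{3})=x b_{2}+y b_{3}$ with integers $x,y$ satisfying $(xb_{2}+yb_{3})^{2}=18$ and $b_{2}\cdot(xb_{2}+yb_{3})=9$; the second equation gives $6x+3y=9$, i.e. $2x+y=3$, and substituting into the first pins down $(x,y)$. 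One checks that the resulting change-of-basis matrix has determinant $\pm3$, which simultaneously verifies that the sublattice has index $3$ and that the restriction of the $T(X)$-form to it is the prescribed $T(A)(3)$-form. This is a finite, purely arithmetic verification of a $2\times 2$ system, so I expect it to go through cleanly once the right $(x,y)$ is located.

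With the explicit matrix in hand, the remaining assertions are formal. The map $\pi_{A*}$ sends a basis of $T(A)(3)$ to a set of vectors in $T(X)$ reproducing the $T(A)(3)$ Gram matrix, hence it is an isometry onto its image by construction (an integral linear map preserving the full Gram matrix is an isometry of lattices onto its image, and injectivity follows since the Gram matrix (\ref{eq:nom1}) is nondegenerate). That the image has index $3$ follows from $|\det|=3$ for the overall change-of-basis matrix, or equivalently by comparing discriminants: $\det$ of (\ref{eq:nom1}) is $9$ times $\det$ of the $T(X)$ matrix, and an index-$m$ sublattice scales the discriminant by $m^{2}$, giving $m^{2}=9$, so $m=3$.

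The main obstacle, such as it is, is purely one of exhibiting the correct integer vectors: one must check that the candidate $(x,y)$ produced by the linear constraint $2x+y=3$ actually yields $(xb_{2}+yb_{3})^{2}=18$ rather than merely being forced to be consistent, and that the chosen generators span exactly an index-$3$ (not index-$1$ or larger) sublattice. Because $T(X)$ is unique in its genus by Proposition \ref{prop:T(X)-unique-in-genus}, there is no ambiguity about which target lattice we are embedding into, so once a single valid embedding is produced the statement is established; I do not anticipate needing the overlattice machinery of Proposition \ref{prop:().-Nikulin1Over} here, though it is that machinery which will later upgrade this single embedding into a classification of \emph{all} overlattices of $T(A)(3)$ isometric to $T(X)$.
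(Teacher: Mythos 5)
Your arithmetic is correct as far as it goes: the system $2x+y=3$, $6x^{2}+6xy+2y^{2}=18$ has solutions $(x,y)=(0,3)$ and $(3,-3)$, so $a_{1}\mapsto b_{1}$, $a_{2}\mapsto b_{2}$, $a_{3}\mapsto 3b_{3}$ is an isometry from the lattice \eqref{eq:nom1} onto an index-$3$ sublattice of $T(X)$ (indeed exactly the sublattice $T_{1}=\langle e_{1},e_{2},3e_{3}\rangle$ that the paper introduces in Section 6), and your discriminant comparison ($\det=-162k$ versus $-18k$, ratio $9=3^{2}$) is the same argument the paper invokes for the index assertion. The gap is elsewhere: $\pi_{A*}$ is not a name for some embedding to be constructed; it is a specific, geometrically defined morphism --- the pushforward on transcendental lattices induced by the quotient $A\to A/G_{A}$ and the resolution $\Km_{3}(A)\to A/G_{A}$ --- and the paper's proof of existence is a citation of \cite[Section 2.3]{RS}, where this map is constructed and shown to satisfy $\pi_{A*}(T(A))\simeq T(A)(3)$. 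That geometric fact cannot be produced by lattice arithmetic inside $T(X)$.

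The distinction is essential for how the proposition is used downstream. In the proof of Theorem \ref{thm:Main3} one needs that $T(X)$, \emph{with its Hodge structure} $\CC\omega_{X}$, is an over-lattice of the image of $T(A)(3)$ carrying the Hodge structure $\CC\omega_{A}$; in other words, one needs that $\pi_{A*}$ sends $\omega_{A}$ to a multiple of $\omega_{X}$, which holds for the geometric pushforward but is meaningless for an abstractly chosen embedding. It is precisely this compatibility that allows the paper to conclude that necessarily $(T(X),\CC\omega_{X})\simeq(T,\CC\omega_{A})$ once the over-lattice $T$ is known to be unique. So your construction correctly proves the purely lattice-theoretic shadow of the statement --- that $T(X)$ contains an index-$3$ sublattice isometric to $T(A)(3)$, a computation the paper in effect redoes later --- but the actual content of the proposition, namely that the geometrically defined morphism $\pi_{A*}$ has these properties, is imported from \cite{RS} and is not recoverable by your method.
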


\begin{proof}
The morphism $\pi_{A*}$ is defined in \cite[Section 2.3]{RS}, it
is induced by the natural rational map $\pi_{1}:A\dashrightarrow\Km_{2}(A)=X$,
and it is shown that the lattice $\pi_{A*}(T(A))$ is isometric to
$T(A)(3)$. For the assertion on the index, it is sufficient to compare
the discriminant of the two lattices. 
\end{proof}
Suppose that there exists in $T(A)(3)\otimes\QQ$ a unique over-lattice
$T$ of $T(A)(3)$ such that $T$ is isometric to $T(X)$, where $X=\Km_{3}(A)$.
Let us suppose moreover that $T$ contains a unique sub-lattice isometric
to $T(A)(3)$. Let $(B,G_{B})$ be another abelian surface $B$ with
an order $3$ symplectic automorphism group $G_{B}$. 
\begin{thm}
\label{thm:Main3}Under the above hypothesis, there exists a Hodge
isometry 
\begin{equation}
(T(A),\CC\o_{A})\simeq(T(B),\CC\o_{B})\label{eq:hodgeIso-1-1}
\end{equation}
if and only if there exists a Hodge isometry 
\[
(T(\Km_{3}(A)),\CC\omega_{\Km_{3}(A)})\simeq(T(\Km_{3}(B)),\CC\omega_{\Km_{3}(B)}).
\]
\end{thm}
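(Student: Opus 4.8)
The plan is to transport the integral structures through the rational isometry underlying any Hodge isometry, and to use the maps $\pi_{A*},\pi_{B*}$ together with the two uniqueness hypotheses to pin down images precisely. The basic setup is the following. Applied to $B$, the construction of the previous proposition gives a morphism $\pi_{B*}\colon T(B)(3)\to T(\Km_{3}(B))$ which is an isometry onto a sublattice of index $3$, so that $T(\Km_{3}(B))$ is an over-lattice of $\pi_{B*}(T(B)(3))\cong T(B)(3)$ inside $T(B)(3)\otimes\QQ$. A key geometric input, established in \cite{RS} when $\pi_{A*}$ is constructed, is that $\pi_{A*}$ is compatible with the Hodge structures: it sends $\omega_{A}$ to a nonzero multiple of $\omega_{\Km_{3}(A)}$, and likewise for $\pi_{B*}$. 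Since scaling the bilinear form by $3$ changes neither the underlying module nor the complex line $\CC\omega$, any Hodge isometry $T(A)\to T(B)$ is simultaneously a Hodge isometry $T(A)(3)\to T(B)(3)$, and these facts let us move the complex line $\CC\omega$ freely across all four lattices through the identifications $T(A)\otimes\QQ=T(A)(3)\otimes\QQ=T(\Km_{3}(A))\otimes\QQ$ (via $\pi_{A*}$) and their analogues for $B$. By the uniqueness hypothesis, $T(\Km_{3}(A))$ is exactly the over-lattice $T$ of $\pi_{A*}(T(A)(3))$ in this picture.

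For the implication $(\ref{eq:hodgeIso-1-1})\Rightarrow$(Kummer Hodge isometry), I would start from a Hodge isometry $\phi\colon T(A)\to T(B)$ and consider its rational extension $\phi_{\QQ}\colon T(A)(3)\otimes\QQ\to T(B)(3)\otimes\QQ$, an isometry for the scaled forms. The over-lattice $T(\Km_{3}(B))\subset T(B)(3)\otimes\QQ$ then pulls back under $\phi_{\QQ}$ to an over-lattice of $T(A)(3)$ in $T(A)(3)\otimes\QQ$ isometric to $T(\Km_{3}(B))$. Because $A$ and $B$ are Fourier--Mukai partners we have $T(A)\cong T(B)$ as lattices, hence the same value of $k$ and $T(\Km_{3}(B))\cong T(X)$; thus this pulled-back over-lattice is isometric to $T(X)$, and hypothesis one forces it to equal $T=T(\Km_{3}(A))$. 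Consequently $\phi_{\QQ}(T(\Km_{3}(A)))=T(\Km_{3}(B))$, so $\phi_{\QQ}$ restricts to an isometry $T(\Km_{3}(A))\to T(\Km_{3}(B))$ which is a Hodge isometry because $\phi$ preserves $\CC\omega$ and $\pi_{A*},\pi_{B*}$ match $\CC\omega_{A},\CC\omega_{B}$ with $\CC\omega_{\Km_{3}(A)},\CC\omega_{\Km_{3}(B)}$.

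For the converse, I would begin with a Hodge isometry $\psi\colon T(\Km_{3}(A))\to T(\Km_{3}(B))$. Comparing invariants (the Gram determinant of $T(X)$ is $-18k$, and the discriminant form is $\ZZ/6k\ZZ\times\ZZ/3\ZZ$) with the classification of transcendental lattices of generalized Kummer surfaces in Proposition \ref{prop:T(X)-unique-in-genus} shows $L_{\Km_{3}(B)}^{2}=6k$ and $T(B)(3)\cong T(A)(3)$, so the rational identifications apply. Now $\psi_{\QQ}$ carries the index-$3$ sublattice $\pi_{B*}(T(B)(3))\subset T(\Km_{3}(B))$ back to a sublattice of $T=T(\Km_{3}(A))$ isometric to $T(B)(3)\cong T(A)(3)$. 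By the second uniqueness hypothesis this sublattice must be $\pi_{A*}(T(A)(3))$, so $\psi_{\QQ}$ restricts to an isometry $\pi_{A*}(T(A)(3))\to\pi_{B*}(T(B)(3))$; dividing the forms by $3$ and using $\pi_{A*},\pi_{B*}$ yields an isometry $T(A)\to T(B)$, which is Hodge because $\psi$ respects the complex lines.

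The part I expect to require the most care is isolating the exact roles of the two uniqueness hypotheses: the first (unique over-lattice isometric to $T(X)$) is what makes the image of $T(\Km_{3}(A))$ canonical in the forward direction, while the second (unique sublattice isometric to $T(A)(3)$) is what lets the rational isometry descend to an honest integral isometry $T(A)\to T(B)$ in the converse. Without them, $\psi_{\QQ}$ could send $\pi_{A*}(T(A)(3))$ to some other isometric-but-distinct sublattice of $T(\Km_{3}(B))$, blocking descent; this is precisely the mechanism that will fail in case (b) of Theorem \ref{thm:Main1}. A secondary point to verify carefully is the Hodge-compatibility and the multiplicative constants attached to $\pi_{A*}$ and $\pi_{B*}$, so that tracking $\CC\omega$ through the chain of identifications is legitimate.
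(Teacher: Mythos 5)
Your proposal is correct and takes essentially the same route as the paper's proof: the forward implication rests on the uniqueness of the over-lattice of $T(A)(3)$ isometric to $T(X)$ (forcing the rational extension of the Hodge isometry to carry $T(\Km_{3}(A))$ onto $T(\Km_{3}(B))$), and the converse rests on the uniqueness of the sublattice isometric to $T(A)(3)$ inside $T\simeq T(X)\simeq T(X')$ (forcing the Hodge isometry to send $\pi_{A*}(T(A)(3))$ to $\pi_{B*}(T(B)(3))$, whence a Hodge isometry $T(A)\simeq T(B)$ after unscaling). Your write-up merely makes explicit some identifications the paper leaves implicit, namely the rational extensions and the compatibility of $\pi_{A*},\pi_{B*}$ with the period lines.
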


\begin{proof}
By Proposition \ref{prop:T(X)-unique-in-genus}, the lattice $T(X)$
is uniquely determined by the lattice $T(A)$ and conversely, in particular,
the lattice $T(X)$ is isometric to $T(X')$ (where $X'=\Km_{3}(B)$)
if and only if $T(A)$ is isometric to $T(B)$. Suppose that one has
an isomorphism of Hodge structures $(T(A),\CC\o_{A})\simeq(T(B),\CC\o_{B})$,
then $T(A)\simeq T(B)$ and from the hypothesis, there exists in $T(B)(3)\otimes\QQ$
a unique over-lattice $T'$ isometric to $T(X')$. The Hodge structure
$(T(X),\CC\omega_{X})$ is then uniquely determined by $(T,\CC\omega_{A})$:
necessarily $(T(X),\CC\omega_{X})\simeq(T,\CC\omega_{A})$, and also
$(T(X'),\CC\omega_{X'})\simeq(T',\CC\omega_{A})$, therefore the Hodge
structures $(T(X),\CC\o_{X})$ and $(T(X'),\CC\o_{X'})$ are isomorphic.

Conversely, suppose that there exists an isomorphism 
\[
\phi:(T(X),\CC\o_{X})\to(T(X'),\CC\o_{X'})
\]
of Hodge structures. Since $T$ contains a unique sub-lattice isometric
to $T(A)(3)$ and $T(X)\simeq T\simeq T(X')$, the sub-lattice $T(A)(3)$
is sent by $\phi$ to $T(B)(3)$ and there is a Hodge isomorphism
$(T(A),\CC\o_{A})\simeq(T(B),\CC\o_{B}).$ 
\end{proof}
Let us therefore study the even (as always) over-lattices $T$ of
$T(A)(3)$ such that $T(A)(3)$ has index $3$ in $T$. We consider
these over-lattices contained in $T(A)(3)\otimes\QQ$. The quadratic
form on the discriminant group $\text{A}_{T(A)(3)}$ is given by 
\[
Q=\left(\begin{array}{ccc}
-\tfrac{1}{6k} & 0 & 0\\
0 & \tfrac{2}{3} & -\tfrac{1}{3}\\
0 & -\tfrac{1}{3} & \tfrac{2}{9}
\end{array}\right).
\]
In the following list of $13$ elements of the form $(v;s_{q})\in\text{A}_{T(A)(3)}\times\QQ/2\ZZ$:
\begin{equation}
\begin{alignedat}{1} & (0,0,\frac{1}{3};0)\\
 & (0,\frac{1}{3},0;\tfrac{2}{3}),(0,\frac{1}{3},\frac{1}{3};\tfrac{2}{3}),(0,\frac{1}{3},\frac{2}{3};\tfrac{14}{3}),\\
 & (\frac{1}{3},0,0;-\tfrac{2}{3}k),(\frac{1}{3},0,\frac{1}{3};-\tfrac{2}{3}k),(\frac{1}{3},0,\frac{2}{3};-\tfrac{2}{3}k)\\
 & (\frac{1}{3},\frac{1}{3},0;-\tfrac{2}{3}k+\tfrac{2}{3}),(\frac{1}{3},\frac{1}{3},\frac{1}{3};-\tfrac{2}{3}k+\tfrac{2}{3}),(\frac{1}{3},\frac{1}{3},\frac{2}{3};-\tfrac{2}{3}k+\tfrac{14}{3}),\\
 & (\frac{1}{3},\frac{2}{3},0;-\tfrac{2}{3}k+\tfrac{8}{3}),(\frac{1}{3},\frac{2}{3},\frac{1}{3};-\tfrac{2}{3}k+\tfrac{2}{3}),(\frac{1}{3},\frac{2}{3},\frac{2}{3};-\tfrac{2}{3}k+\tfrac{8}{3}),
\end{alignedat}
\label{eq:liste}
\end{equation}
are the $13$ generators 
\[
v=(a_{1},a_{2},a_{3})\in\text{A}_{T(A)(3)}
\]
of the $13$ order $3$ sub-groups $H_{v}$ of $\text{A}_{T(A)(3)}\simeq\ZZ/6k\ZZ\times\ZZ/3\ZZ\times\ZZ/9\ZZ$,
and their square $s_{q}=vQ\,^{t}v\in\QQ/2\ZZ$. We denote by $T_{v}$
the pull-back in $T(A)(3)\otimes\QQ$ of the group $H_{v}\subset\text{A}_{T(A)(3)}$.
We remark that the groups generated by the elements 
\[
(0,\frac{1}{3},0;\tfrac{2}{3}),(0,\frac{1}{3},\frac{1}{3};\tfrac{2}{3}),(0,\frac{1}{3},\frac{2}{3};\tfrac{14}{3})
\]
in the list \eqref{eq:liste} are not isotropic sub-groups, thus their
pull-back to $T(A)(3)\otimes\QQ$ are not over-lattices of $T(A)(3)$.
For any $k$, the case $v_{0}=(0,0,\frac{1}{3})$ gives an index $3$
over-lattice isometric to $T(X)$.

Let $N^{over}$ be the number of over-lattices of $T(A)(3)$ in $T(A)(3)\otimes\QQ$
that are isomorphic to $T(X)$. From the above discussion on case
$v_{0}=(0,0,\frac{1}{3})$, we get that $N^{over}\geq1$. The aim
of the next two Sections is to prove the following result: 
\begin{thm}
\label{thm:NumberOfTX}We have\vspace{1mm}
\\
\begin{tabular}{|c|c|c|c|c|}
\hline 
 & $k=1\text{ or }2\mod3$  & $k=0\mod9$  & $k=3\mod9$  & $k=6\mod9$\tabularnewline
\hline 
$N^{over}$  & $1$  & $3$  & $1$  & $2$\tabularnewline
\hline 
$w$  & $(0,0,\frac{1}{3})$  & $\begin{array}{c}
(0,0,\frac{1}{3})\\
(\frac{1}{3},0,\frac{1}{3})\\
(\frac{1}{3},0,\frac{2}{3})
\end{array}$  & $(0,0,\frac{1}{3})$  & $\begin{array}{c}
(0,0,\frac{1}{3})\\
(\frac{1}{3},0,0)
\end{array}$\tabularnewline
\hline 
\end{tabular}\\
 where in the last line are the elements $w$ such that $T_{w}$ is
isometric to $T(X)$. 
\end{thm}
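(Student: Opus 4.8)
The plan is to turn the isometry question into a comparison of $3$-primary discriminant forms, and then to compute these for the finitely many candidate over-lattices listed in \eqref{eq:liste}.

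\emph{Reduction to the prime $3$.} Each index-$3$ over-lattice $T_v=\pi^{-1}(H_v)$ has determinant $-18k$ (since $\det T(A)(3)=-162k$ scales by the index squared) and signature $(2,1)$, exactly as $T(X)$. By Proposition~\ref{prop:T(X)-unique-in-genus} the lattice $T(X)$ is unique in its genus, so the genus criterion (equal signature together with isometric discriminant forms forces the same genus) shows that $T_v\simeq T(X)$ if and only if $\text{A}_{T_v}\simeq\text{A}_{T(X)}$ as torsion quadratic modules. Moreover every isotropic group $H_v$ lies in the $3$-torsion of $\text{A}_{T(A)(3)}$, so passing to the over-lattice only alters the $3$-primary component: the prime-to-$3$ parts of $\text{A}_{T_v}$ and of $\text{A}_{T(X)}$ both agree with those of $\text{A}_{T(A)(3)}$. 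Hence it suffices to compare the $3$-parts, and the whole computation localizes at $3$.

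\emph{Listing the isotropic groups.} From the values $s_q$ in \eqref{eq:liste} I read off which of the $13$ groups $H_v$ satisfy $s_q\equiv0$ in $\QQ/2\ZZ$: the group $v_0=(0,0,\tfrac13)$ for every $k$; the three groups $(\tfrac13,0,0),(\tfrac13,0,\tfrac13),(\tfrac13,0,\tfrac23)$ exactly when $3\mid k$; the six groups with $a_1=\tfrac13$ and $a_2\in\{\tfrac13,\tfrac23\}$ exactly when $k\equiv1\bmod3$; and the three groups with $a_1=0,\,a_2=\tfrac13$ never. This already produces the trichotomy in $k\bmod3$ and reduces $N^{over}$ to counting, inside each family, the $v$ whose $3$-part matches $T(X)$.

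\emph{Computing the $3$-parts.} Write $k=3^a m$ with $\gcd(m,3)=1$. Then the $3$-part of $\text{A}_{T(X)}$ is $\ZZ/3^{a+1}\times\ZZ/3$ with form $(\tfrac{-2m}{3^{a+1}})\oplus(\tfrac{2}{3})$. For each isotropic $v=a_1e_1+a_2e_2+a_3e_3$ I would exhibit a Gram matrix of $T_v=\langle L,v\rangle$ in an explicit basis, invert it, and extract the $3$-part of $\text{A}_{T_v}$ using the calculus of the modules $(\tfrac{u}{3^n})$ of Section~2.3, together with the fact proved there that a form on a cyclic $3$-group is, up to isometry, either $(\tfrac{2}{3^n})$ or $(\tfrac{4}{3^n})$, the two being told apart by the square class of $u$ modulo $3$. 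For $v=(\tfrac13,0,0)$ the first cyclic factor shrinks from $\ZZ/3^{a+1}$ to $\ZZ/3^{a-1}$: when $a\geq2$ the group $\text{A}_{T_v}$ has smaller exponent than $\text{A}_{T(X)}$ and the two cannot be isometric, which kills this $v$ for $k\equiv0\bmod9$; when $a=1$ the groups coincide and the square-class comparison of $(\tfrac{2}{9})$ with $(\tfrac{-2m}{9})$ gives a match precisely for $m\equiv2\bmod3$, i.e. $k\equiv6\bmod9$. The two mixed generators $(\tfrac13,0,\tfrac13),(\tfrac13,0,\tfrac23)$ amalgamate the factor of valuation $a+1$ with the $\ZZ/9$ carried by the block $\left(\begin{smallmatrix}6&9\\9&18\end{smallmatrix}\right)$, and the analogous computation fills in the remaining columns of the table; finally the six generators isotropic for $k\equiv1\bmod3$ are checked to yield a $3$-part not isometric to that of $T(X)$, so that $N^{over}=1$ there.

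\emph{Main obstacle.} The delicate point is the last step for the mixed generators, where $T_v$ fuses two cyclic summands of different $3$-adic valuations. One must pin down both the resulting abelian group structure (whether a cyclic factor of order $3^{a+1}$ survives) and the square class of the surviving form, and it is exactly here that the finer dependence on $a=v_3(k)$ and on $m\bmod3$ — that is, on $k\bmod9$ — enters. This is what separates the cases $k\equiv0,3,6\bmod9$ and explains why $(\tfrac13,0,0)$ behaves oppositely to $(\tfrac13,0,\tfrac13)$ and $(\tfrac13,0,\tfrac23)$.
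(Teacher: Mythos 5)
Your framework coincides with the paper's: both turn ``$T_{v}\simeq T(X)$'' into ``$\text{A}_{T_{v}}\simeq\text{A}_{T(X)}$ as torsion quadratic modules'' via Nikulin's genus criterion together with the uniqueness of $T(X)$ in its genus (Proposition \ref{prop:T(X)-unique-in-genus}). Your census of which of the thirteen groups in \eqref{eq:liste} are isotropic is correct and gives the trichotomy in $k\mod3$; your localization at the prime $3$ is correct (and in fact slightly cleaner than the paper, which re-verifies the prime-to-$3$ agreement by a square-class computation in the $k\equiv0\mod9$ sub-case); and your treatment of $v=(\frac{1}{3},0,0)$ --- exponent mismatch when $9\mid k$, square-class comparison of $(\frac{2}{9})$ against $(\frac{-2m}{9})$ when $k=3m$ with $3\nmid m$ --- is a complete proof of that case, agreeing with Proposition \ref{prop:6mod9}.

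However, there is a genuine gap: the computations that actually produce the table are announced rather than performed. For the mixed generators $(\frac{1}{3},0,\frac{1}{3})$ and $(\frac{1}{3},0,\frac{2}{3})$ you write that ``the analogous computation fills in the remaining columns,'' and your closing paragraph concedes that exactly this step --- whether a cyclic factor of order $3^{a+1}$ survives the amalgamation, and which square class the surviving form carries --- is the main obstacle. But those questions \emph{are} the theorem: their answers are what make $N^{over}$ equal to $3$, $1$, $2$ in the three columns with $3\mid k$. In the paper this is the content of Sections 4.2--4.3: one inverts the Gram matrix of $T_{(\frac{1}{3},0,\frac{1}{3})}$ and finds that for $k\equiv3\mod9$ the discriminant group degenerates to $(\ZZ/3\ZZ)^{3}\times\ZZ/2k'\ZZ$ (so no cyclic factor of order $9$ survives and $T_{v}\not\simeq T(X)$); that for $k\equiv6\mod9$ the group matches but the $3$-part of the form is $(\frac{2}{3})+(\frac{4}{9})$, whereas by Equation \eqref{eq:no3onATX} that of $T(X)$ is $(\frac{2}{3})+(\frac{2}{9})$; and that for $k\equiv0\mod9$ both group and form match, which requires the non-obvious verification that $1-k'$ is a square modulo $3^{\a+2}$ (writing $k'=3^{\a}k''$ with $3\nmid k''$). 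Similarly, the six generators isotropic for $k\equiv1\mod3$ are only ``checked'' in your text; the paper disposes of them by exhibiting an explicit isometry $g$ of $T(A)(3)$ (Equation \eqref{eq:g-Isometrie}) whose action on $\text{A}_{T(A)(3)}$ makes the six groups a single orbit --- the same $g$ identifies $T_{(\frac{1}{3},0,\frac{2}{3})}$ with $T_{(\frac{1}{3},0,\frac{1}{3})}$ as over-lattices, halving the mixed-generator work --- and then showing that the resulting discriminant group is cyclic, hence not isomorphic to $\ZZ/6k\ZZ\times\ZZ/3\ZZ$. Until these computations (or substitutes for them) are supplied, your argument establishes only the $k\equiv2\mod3$ entry and the row of $(\frac{1}{3},0,0)$.
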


For $k=2\mod3$, we remark that the element $v=(0,0,\frac{1}{3})$
in the list \eqref{eq:liste} is the only possibility in order for
$T_{v}$ to be an over-lattice of $T(A)(3)$. Let us check the possibilities
from the list \eqref{eq:liste} according to the remaining cases $k=0$
or $1\mod3$.

\section{Case $k=0\protect\mod3$}

Let us prove Theorem \ref{thm:NumberOfTX} when $k=0\mod3$. In this
case $k=3k'$ for $k'\in\ZZ$, the possibilities different from $v_{0}=(0,0,\frac{1}{3};0)$
are 
\[
\begin{array}{c}
(\frac{1}{3},0,0;-\tfrac{2}{3}k),(\frac{1}{3},0,\frac{1}{3};-\tfrac{2}{3}k),(\frac{1}{3},0,\frac{2}{3};-\tfrac{2}{3}k).\end{array}
\]

\subsection{Case $k=3k'$ and $v=(\frac{1}{3},0,0)$}

Let us study the case 
\[
v=(\frac{1}{3},0,0)\in\text{A}_{T(A)(3)}.
\]
We have that $v^{2}=-\tfrac{2}{3}k$, and this is $0$ in $\QQ/2\ZZ$
since $k=3k'$ by assumption. Then the Gram matrix in some basis of
the over-lattice $T_{v}$ associated to $H_{v}$ is 
\[
\left(\begin{array}{ccc}
-2k' & 0 & 0\\
0 & 6 & 9\\
0 & 9 & 18
\end{array}\right).
\]
The lattice $T_{v}$ has discriminant group isomorphic to $\ZZ2k'\ZZ\times\ZZ/3\ZZ\times\ZZ/9\ZZ$,
generated by the columns of the matrix 
\[
\left(\begin{array}{ccc}
-\frac{1}{2k'} & 0 & 0\\
0 & \frac{2}{3} & \frac{-1}{3}\\
0 & -\frac{1}{3} & \frac{2}{9}
\end{array}\right).
\]

\begin{prop}
\label{prop:6mod9}The lattice $T_{v}$ is isometric to $T(X)$ if
and only if $k=6\mod9$. 
\end{prop}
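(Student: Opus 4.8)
The plan is to compare the lattice $T_v$ with $T(X)$ by using the genus criterion (the Theorem of Nikulin quoted above): since both are even lattices of the same rank and signature, they are isometric if and only if their discriminant quadratic forms are isometric. Recall $T(X)$ has discriminant group $\ZZ/6k\ZZ\times\ZZ/3\ZZ$, while the proposition gives $T_v$ with discriminant group $\ZZ/2k'\ZZ\times\ZZ/3\ZZ\times\ZZ/9\ZZ$ where $k=3k'$; so both have discriminant order $18k'$ (matching, as expected since $T_v$ is an index-$3$ over-lattice of $T(A)(3)$ whose discriminant drops by $9$). The task is therefore a concrete computation and comparison of discriminant forms, which one handles prime by prime, using the decomposition $\text{A}_L=\oplus_p \text{A}_{L,p}$ and the notation $(\tfrac{u}{v})$ from Section~2.3.

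First I would split both discriminant forms into their $3$-primary and prime-to-$3$ parts. The prime-to-$3$ parts come entirely from the $-6k=-18k'$ entry (equivalently $-2k'$ in $T_v$ after the index-$3$ passage), and these agree on both sides once we track the power of $3$ correctly, so they impose no obstruction beyond a consistency check. The heart of the matter is the $3$-primary part. For $T_v$ I would read off $q$ on the generators $(0,\tfrac23,-\tfrac13)$ and $(0,-\tfrac13,\tfrac29)$ from the displayed matrix $\begin{pmatrix}\tfrac23&-\tfrac13\\-\tfrac13&\tfrac29\end{pmatrix}$ together with the contribution of $-\tfrac{1}{2k'}$, and put it into the normal form $\bigl(\tfrac{u_1}{3^{a}}\bigr)+\bigl(\tfrac{u_2}{3^{b}}\bigr)+\cdots$ guaranteed by Section~2.3. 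The same is done for $T(X)$, whose $3$-part must be computed from its generator $(0,\tfrac23,-1)$ together with the $3$-adic part of the $-\tfrac{1}{6k}$ generator. Because $k=3k'$, the factor $-6k=-18k'$ contributes an extra $3$ to the discriminant group of $T(X)$, so the two $3$-primary forms live on isomorphic groups only after this is accounted for.

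The decisive step is to test isometry of the two $3$-adic torsion quadratic forms using the criterion from Section~2.3: on each cyclic factor $\ZZ/3^a\ZZ$ there are exactly two isometry classes, distinguished by whether the coefficient $u$ is a square or a non-square modulo $3^a$ (equivalently, represented by $\bigl(\tfrac{2}{3^a}\bigr)$ or $\bigl(\tfrac{4}{3^a}\bigr)$). So the comparison reduces to determining, on each cyclic piece, the quadratic residue class of the relevant unit, and here the value of $k'$ modulo $3$ — that is, $k$ modulo $9$ — enters through the $-\tfrac{1}{2k'}$ (equivalently $-\tfrac{1}{6k}$) contribution. I expect the residue symbol to flip precisely according to whether $k'\equiv 0,1,2\bmod 3$, and the assertion $k\equiv 6\bmod 9$, i.e. $k'\equiv 2\bmod3$, should be exactly the condition under which the non-square/square patterns on the two sides match.

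The main obstacle I anticipate is the bookkeeping when the $3$-part of $-6k$ interacts with the $3$-part coming from the $6,9,18$ block: one must correctly diagonalize the $3$-primary form of both lattices and carefully reduce the cross-term $-\tfrac13$ (which is not diagonal) into a direct sum of standard pieces $(\tfrac{u}{3^a})$, watching that the ambient $3$-adic unit obstructions (the square classes) are computed \emph{after} this diagonalization rather than before. Getting the square-class comparison right — in particular handling the passage from $\ZZ/9\ZZ$-type pieces to $\ZZ/3\ZZ$-type pieces when the extra factor of $3$ merges with the existing $3$-torsion — is where the $k\equiv 6\bmod 9$ condition is genuinely pinned down, and where a naive computation is most likely to lose a factor of $3$ or a square class.
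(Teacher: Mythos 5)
Your overall strategy is the same as the paper's: compare the discriminant quadratic forms of $T_v$ and $T(X)$ prime by prime, with the decision made by square classes in the $3$-primary part, and then conclude via the genus. However, as written the argument has genuine gaps. The first is your opening claim that two even lattices of the same rank and signature ``are isometric if and only if their discriminant quadratic forms are isometric'': Nikulin's theorem only says they then lie in the same \emph{genus}, and in general a genus may contain several isometry classes. The implication you need (same discriminant form $\Rightarrow$ isometric) is valid here only because $T(X)$ is unique in its genus, i.e.\ Proposition \ref{prop:T(X)-unique-in-genus}, which must be invoked explicitly; the paper does exactly this as the last step of its proof.

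The second gap is that the decisive computation is never performed: you only state that you ``expect'' the square classes to match precisely when $k'\equiv 2\bmod 3$. The proof requires carrying this out. On the $T_v$ side, after diagonalizing, the $3$-primary form is $(\frac{2}{3})+(\frac{2}{9})$ (the $-\frac{1}{2k'}$ block contributes nothing at $3$ when $3\nmid k'$). On the $T(X)$ side one must split $(-\frac{1}{18k'})=(-\frac{u}{2k'})+(-\frac{v}{9})$ with $9u=1\mod2k'$ and $2k'v=1\mod9$, observe that $u=(3^{-1})^{2}$ is automatically a square so $(-\frac{u}{2k'})\simeq(-\frac{1}{2k'})$, and then check by direct computation that $-(2k')^{-1}\bmod 9$ lies in the square class giving $(\frac{4}{9})$ when $k'=1\mod3$ (so the forms differ and there is no isometry) and in the class giving $(\frac{2}{9})$ when $k'=2\mod3$ (so the forms agree). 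Without this, neither the failure at $k=3\mod9$ nor the success at $k=6\mod9$ is established. Finally, the case $k=0\mod9$ (i.e.\ $3\mid k'$) is not a form comparison at all and your reduction glosses over it: there the two discriminant groups are non-isomorphic as abstract groups, since the $3$-primary part of $\ZZ/18k'\ZZ\times\ZZ/3\ZZ$ has length $2$ while that of $\ZZ/2k'\ZZ\times\ZZ/3\ZZ\times\ZZ/9\ZZ$ has length $3$ when $3\mid k'$ (note also that the common order of these groups is $54k'$, not $18k'$ as you wrote). That group-theoretic obstruction, not a square-class mismatch, is what rules out isometry in that case.
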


\begin{proof}
The elements 
\[
(\frac{1}{2k'},0,0),(0,\tfrac{2}{3},-\tfrac{1}{3}),(0,0,\tfrac{1}{9})
\]
generate $\text{A}_{T_{v}}\simeq\ZZ/2k'\ZZ\times\ZZ/3\ZZ\times\ZZ/9\ZZ$
and have intersection matrix $\text{Diag}(-\frac{1}{2k'},\frac{2}{3},\frac{2}{9})$.

The lattice $T(X)$ has discriminant group isomorphic to 
\[
\ZZ/6k\ZZ\times\ZZ/3\ZZ=\ZZ/18k'\ZZ\times\ZZ/3\ZZ,
\]
thus if $3|k'$, $T_{v}$ is not isometric to $T(X)$. If $3\not|k'$,
then 
\[
\text{A}_{T(X)}\simeq\ZZ/2k'\ZZ\times\ZZ/3\ZZ\times\ZZ/9\ZZ\simeq\text{A}_{T_{v}}
\]
and we must compare the quadratic forms. The quadratic form on $T(X)$
is $\text{Diag}(-\frac{1}{18k'},\frac{2}{3})$. Since $3\not|k'$,
one has 
\[
(-\frac{1}{18k'})=(-\frac{u}{2k'})+(-\frac{v}{9}),
\]
where $u\in(\ZZ/2k'\ZZ)^{*}$ is such that $9u=1\mod2k'$ and $v\in(\ZZ/9\ZZ)^{*}$
is such that $2kv=1\mod9$. Since $9u=1\mod2k'$, there exists $u'\in(\ZZ/2k'\ZZ)^{*}$
such that $u'^{2}=u$, thus $(-\frac{u}{2k'})$ is isometric to $(-\frac{1}{2k'})$.
Suppose that $k'=1\mod3$, then $-(2k')^{-1}=1,4\text{\,or }7\mod9$,
and since $4=2^{2}\mod9$, $7=5^{2}\mod9$, we obtain that $(-\frac{v}{9})=(\frac{4}{9})$.
Suppose that $k'=2\mod3$, then $-(2k')^{-1}=2,5\text{\,or }8\mod9$,
and since $5=2\cdot5^{2}\mod9$, $8=2\cdot2^{2}\mod9$, we obtain
that $(-\frac{v}{9})=(\frac{2}{9})$. Therefore, the quadratic form
on $\text{A}_{T(X)}$ is isometric to 
\begin{equation}
\begin{array}{c}
(-\frac{1}{2k'})+(\frac{4}{9})+(\frac{2}{3})\text{ if\,}k'=1\mod3\,\\
(-\frac{1}{2k'})+(\frac{2}{9})+(\frac{2}{3})\text{ if\,}k'=2\mod3.
\end{array}\label{eq:no3onATX}
\end{equation}
Thus $T(X)$ is isometric to $T_{v}$ if and only if $k=6\mod9$.
In that case $T(X)$ and $T_{v}$ are in the same genus, but we know
from proposition \ref{prop:T(X)-unique-in-genus} that $T(X)$ is
unique in its genus, therefore $T(X)$ and $T_{v}$ are isometric. 
\end{proof}

\subsection{\label{subsec:Case(1/3,0,1/3)}Case $k=3k'$ and $v=(\frac{1}{3},0,\frac{1}{3})$}

Let $T_{v}$ be the over-lattice associated to $v=(\frac{1}{3},0,\frac{1}{3})$.
There is basis in which the Gram matrix of $T=T_{v}$ is 
\[
\left(\begin{array}{ccc}
-2k'+2 & 0 & 3\\
0 & 6 & 9\\
3 & 9 & 18
\end{array}\right).
\]
The discriminant group $\text{A}_{T}$ of $T$ has order $54k'$ ;
it is generated by the columns $c_{1},c_{2},c_{3}$ of

\[
\left(\begin{array}{ccc}
-\frac{1}{2k'} & 0 & \frac{1}{3k'}\\
-\frac{1}{2k'} & \frac{2}{3} & \frac{1}{3}(\frac{1}{k'}-1)\\
\frac{1}{3k'} & -\frac{1}{3} & \frac{2}{9}(1-\frac{1}{k'})
\end{array}\right).
\]
Making the substitution $c_{2}\to2c_{2}+2k'c_{1}$, (which is possible
since $c_{2}$ has order $3$), gives the generators 
\begin{equation}
\left(\begin{array}{ccc}
-\frac{1}{2k'} & 0 & \frac{1}{3k'}\\
-\frac{1}{2k'} & \frac{1}{3} & \frac{1-k'}{3k'}\\
\frac{1}{3k'} & 0 & \frac{2}{9}(\frac{k'-1}{k'})
\end{array}\right)\label{eq:no1}
\end{equation}
with intersection matrix 
\[
\left(\begin{array}{ccc}
\frac{-1}{2k'} & 0 & \frac{1}{3k'}\\
0 & \frac{2}{3} & 0\\
\frac{1}{3k'} & 0 & \frac{2(k'-1)}{9k'}
\end{array}\right),
\]
thus $T$ is isometric to 
\begin{equation}
\left(\frac{2}{3}\right)+\left(\begin{array}{cc}
\frac{-1}{2k'} & \frac{1}{3k'}\\
\frac{1}{3k'} & \frac{2(k'-1)}{9k'}
\end{array}\right).\label{eq:no4}
\end{equation}
We will also use the transformation $c_{3}\to c_{3}+c_{2}$ in Equation
\ref{eq:no1}; then one gets the generators $e_{1},e_{2},e_{3}$ 
\begin{equation}
\left(\begin{array}{ccc}
-\frac{1}{2k'} & 0 & \frac{1}{3k'}\\
-\frac{1}{2k'} & \frac{1}{3} & \frac{1}{3k'}\\
\frac{1}{3k'} & 0 & \frac{2}{9}(\frac{k'-1}{k'})
\end{array}\right)\label{eq:no2}
\end{equation}
with intersection matrix 
\[
\left(\begin{array}{ccc}
-\frac{1}{2k'} & 0 & \frac{1}{3k'}\\
0 & \frac{2}{3} & \frac{2}{3}\\
\frac{1}{3k'} & \frac{2}{3} & \frac{2(4k'-1)}{9k'}
\end{array}\right).
\]

\subsubsection{Sub-case $v=(\frac{1}{3},0,\frac{1}{3})$ and $k=0\protect\mod9$}

Suppose that $k=0\mod9$. Let us prove 
\begin{lem}
The lattice $T$ associated to $v=(\frac{1}{3},0,\frac{1}{3})$ is
isometric to $T(X)$ 
\end{lem}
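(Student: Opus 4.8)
The plan is to prove that $T$ and $T(X)$ lie in the same genus and then conclude by the uniqueness of $T(X)$ in its genus. First I would record that the signatures agree: $T$ is a finite-index overlattice of $T(A)(3)$ inside $T(A)(3)\otimes\QQ$, so $T\otimes\RR=T(A)(3)\otimes\RR$ has signature $(2,1)$, and $T(X)$ has signature $(2,1)$ as well. By the genus criterion (\cite[Corollary 1.9.4]{Nikulin}) it then suffices to show that the discriminant forms $\text{A}_{T}$ and $\text{A}_{T(X)}$ are isometric torsion quadratic modules; once this is done, Proposition \ref{prop:T(X)-unique-in-genus} yields $T\simeq T(X)$. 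This is the same mechanism as in the proof of Proposition \ref{prop:6mod9}.

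For the comparison I would use the forms already at hand. By \eqref{eq:no4} we have $\text{A}_{T}\simeq(\tfrac{2}{3})\oplus M$ with $M=\left(\begin{smallmatrix}-\frac{1}{2k'} & \frac{1}{3k'}\\ \frac{1}{3k'} & \frac{2(k'-1)}{9k'}\end{smallmatrix}\right)$, while Proposition \ref{prop:T(X)-unique-in-genus} gives $\text{A}_{T(X)}\simeq(-\tfrac{1}{6k})\oplus(\tfrac{2}{3})$. Since $k\equiv 0\mod 9$ I write $k=3k'$ with $k'=3m$, and $m=3^{b}m'$ with $3\nmid m'$. I would then decompose both modules into their $p$-primary parts using the calculus of the symbols $(\frac{u}{v})$ and their splittings from Section 2.3. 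The components at primes $p\neq 3$ agree by a direct localization of $M$ and of $(-\frac{1}{6k})$: away from $3$ both are governed by the rank-one form $(-\frac{1}{2k'})$ (note $6k=9\cdot 2k'$), so all the content sits in the $3$-primary part.

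The main obstacle is the $3$-primary comparison. Because $3\mid k'$, the off-diagonal entry $\frac{1}{3k'}$ of $M$ couples its two generators and raises their orders, so the $3$-part is not visibly a sum of the obvious cyclic pieces; both $3$-groups have order $3^{4+b}$, and I must show the two $3$-adic forms are genuinely isometric and not merely equinumerous. The method is to reduce each cyclic $3$-adic summand to one of the normal forms $(\frac{2}{3^{a}})$, $(\frac{4}{3^{a}})$, which by Section 2.3 exhaust the torsion quadratic modules on $\ZZ/3^{a}\ZZ$, and then to read off the distinguishing square-class (Legendre-type) invariant. On the $T(X)$ side the $3$-primary part is the sum of the $3$-primary component of $(-\frac{1}{6k})$, a cyclic form on $\ZZ/3^{3+b}\ZZ$, with $(\frac{2}{3})$ on $\ZZ/3\ZZ$. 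I would show that the coupled $3$-part of $(\frac{2}{3})\oplus M$ reduces to exactly this pair; concretely, the self-pairing $\frac{2(k'-1)}{9k'}$ of the generator $c_{3}$ of \eqref{eq:no1} carries the cyclic $\ZZ/3^{3+b}\ZZ$ piece, and the crux is to verify that it gives the same square class as the $3$-primary component of $(-\frac{1}{6k})$.

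This last step, computing and matching the square classes, is where the hypothesis enters: they coincide precisely when $k\equiv0\mod 9$, which is exactly what fails for $k\equiv 3,6\mod 9$, consistently with Proposition \ref{prop:6mod9} and the counts in Theorem \ref{thm:NumberOfTX}. Having matched the $3$-primary parts together with the prime-to-$3$ parts, we obtain $\text{A}_{T}\simeq\text{A}_{T(X)}$; hence $T$ and $T(X)$ are in the same genus, and uniqueness of $T(X)$ in its genus (Proposition \ref{prop:T(X)-unique-in-genus}) gives $T\simeq T(X)$, which is the claim.
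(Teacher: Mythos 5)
Your overall strategy is the same as the paper's: show that $T$ and $T(X)$ have the same signature and isometric discriminant forms, hence lie in the same genus, and then invoke the uniqueness of $T(X)$ in its genus (Proposition \ref{prop:T(X)-unique-in-genus}). But your write-up stops exactly where the proof has to begin: every decisive computation is announced (``the method is to reduce\dots'', ``the crux is to verify\dots'') and none is carried out. Concretely, two things are missing. First, you never actually split the coupled form $M$ of \eqref{eq:no4} into its prime-to-$3$ and $3$-primary pieces; this requires explicit base changes (in the paper, by $P=\left(\begin{smallmatrix}1 & \frac{2}{3}\\ 0 & 1\end{smallmatrix}\right)$ on the prime-to-$3$ part and $P'=\left(\begin{smallmatrix}1 & 0\\ \frac{3}{2-2k'} & 1\end{smallmatrix}\right)$ on the $3$-torsion part, using that $2-2k'$ is invertible mod $3$), which identify the $3$-primary part of $\text{A}_{T}$ as $(\frac{2}{3})+\bigl(\frac{2(k'-1)/k''}{3^{\alpha+2}}\bigr)$, where $k'=3^{\alpha}k''$ with $3\nmid k''$. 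Second, and this is the actual heart of the lemma, you never verify the square-class match: one must check that the ratio of $\frac{2(k'-1)/k''}{3^{\alpha+2}}$ to the $3$-primary component $-\frac{1/(2k'')}{3^{\alpha+2}}$ of $(-\frac{1}{6k})$, namely $4(1-3^{\alpha}k'')$, is a square modulo $3^{\alpha+2}$. This is true --- $1-3^{\alpha}k''\equiv 1\mod3$, and a unit is a square mod $3^{n}$ if and only if it is $\equiv 1\mod3$ (the paper exhibits the explicit square root $1-3^{\alpha}k''/2$) --- but asserting that ``the square classes coincide'' without this computation is precisely the gap: it is the one statement in the whole argument that is not a formality.

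Relatedly, your closing claim that the square classes ``coincide precisely when $k\equiv0\mod9$'' misplaces where the hypothesis enters. Once $k\equiv0\mod9$ (i.e.\ $3\mid k'$), the square-class verification succeeds automatically, as just noted; the hypothesis is used earlier, in determining the group structure and the shape of the splitting. Indeed, for $k\equiv3\mod9$ the comparison fails already at the level of abstract groups ($\text{A}_{T}\simeq(\ZZ/3\ZZ)^{3}\times\ZZ/2k'\ZZ$ is not isomorphic to $\ZZ/18k'\ZZ\times\ZZ/3\ZZ$), and only for $k\equiv6\mod9$ is the obstruction a genuine square-class mismatch ($(\frac{4}{9})$ versus $(\frac{2}{9})$, by Equation \eqref{eq:no3onATX}). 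So the trichotomy is not detected by a single Legendre-type invariant as your plan suggests; filling the gap requires the case-specific computations that the paper performs.
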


\begin{proof}
Let be $k'\in\NN$ such that $k=3k'$ and let us define $k''$ by
$k'=3^{\a}k''$ with $\a\geq1$ and $k''$ prime to $3$. Let us study
the torsion quadratic module 
\[
Q=\left(\begin{array}{cc}
\frac{-1}{2k'} & \frac{1}{3k'}\\
\frac{1}{3k'} & \frac{2(k'-1)}{9k'}
\end{array}\right)
\]
in Equation \ref{eq:no4}. Let $\text{A}_{T}^{\perp3}$ be the subgroup
of elements that have prime to $3$ order in the finite abelian group
$\text{A}_{T}$. We can then define $v_{2}'=v_{2}+\frac{2}{3}v_{1}$,
where $v_{1},v_{2}$ are the base vectors. Let $P=\left(\begin{array}{cc}
1 & \text{\ensuremath{\frac{2}{3}}}\\
0 & 1
\end{array}\right)$ be the basis change matrix; one has 
\[
^{t}PQP=\left(\begin{array}{cc}
\frac{-1}{2k'} & 0\\
0 & \frac{2}{9}
\end{array}\right)\simeq(\frac{-1}{2k'})+(\frac{2}{9}),
\]
where since we consider the sub-group $\text{A}_{T}^{\perp3}$, the
form $(\frac{2}{9})$ is the zero form, and the quadratic form on
$\text{A}_{T}^{\perp3}$ is therefore $(-\frac{1}{2k'})$. For the
$3$-torsion sub-group $\text{A}_{T}(3)$, let us consider $v_{1}'=v_{1}+\frac{3}{2-2k'}v_{2}$
(one has $2-2k'=2\mod3$). Under the basis change by $P'=\left(\begin{array}{cc}
1 & 0\\
\frac{3}{2-2k'} & 1
\end{array}\right)$, the quadratic form on $\text{A}_{T}(3)$ is isometric to 
\[
\left(\begin{array}{cc}
\frac{-1}{2(k'-1)} & 0\\
0 & \frac{2}{9}(1-\frac{1}{k'})
\end{array}\right)\simeq\left(\frac{-1}{2(k'-1)}\right)+\left(\frac{2}{9}(1-\frac{1}{k'})\right)
\]
and since we consider here the sub-group $\text{A}_{T}(3)$ and $k'-1=-1\mod3$,
the form $\left(\frac{-1}{2(k'-1)}\right)$ is zero, and the quadratic
form on $\text{A}_{T}(3)$ is isometric to 
\[
\left(\frac{2}{9}(1-\frac{1}{k'})\right)=\left(\frac{2(k'-1)/k''}{3^{\a+2}}\right).
\]
We thus conclude that the quadratic form on $T$ is 
\[
\begin{array}{c}
(-\frac{1}{2k'})\text{ for elements in A}_{T}^{\perp3},\\
\left(\frac{2(k'-1)}{9k'}\right)+(\frac{2}{3})\text{\text{ for elements in A}}_{T}(3).
\end{array}
\]
Now the quadratic form on $T(X)$ is 
\[
(-\frac{1}{6k})+(\frac{2}{3}).
\]
The two quadratic forms are equal if they are equal at any prime.
At prime $3$, they are equal if and only if 
\[
(-\frac{1/(2k'')}{3^{\a+2}})=(\frac{2(k'-1)/k''}{3^{\a+2}}),
\]
this is the case if and only if 
\[
\frac{2(k'-1)/k''}{-1/(2k'')}=4(1-3^{\a}k'')
\]
is a square modulo $3^{\a+2}$, this is equivalent to $1-3^{\a}k''\in3^{\a+2}$
is a square. Let $v=1-3^{\a}k''/2\in\ZZ/3^{\a+2}\ZZ$, then $v^{2}=1-3^{\a}k''+3^{2\a}k''/4$
and therefore, when $\a\geq2$, $v^{2}=1-3^{\a}k''$ is a square.
One can check that this is also true for $\a=1$ by a direct computation.
Thus we obtain that, at prime $3$, the two quadratic forms are equal.

Now for the part coprime to $3$, we must compare $(-\frac{1}{2k'})=(-\frac{1/3^{\a}}{2k''})$
with $(-\frac{1}{6k})=(-\frac{1/3^{\a+2}}{2k''})$: these forms are
isometric since they differ by a square.

We thus proved that the two lattices $T,\,T(X)$ are in the same genus.
Since the genus contains a unique element by Proposition \ref{prop:T(X)-unique-in-genus},
the two lattices are isomorphic. 
\end{proof}

\subsubsection{Sub-case $v=(\frac{1}{3},0,\frac{1}{3})$ and $k=3\protect\mod9$}

Suppose that $k=3k'$ with $k'=1\mod3$, and let us prove 
\begin{lem}
The lattice $T$ associated to $v=(\frac{1}{3},0,\frac{1}{3})$ is
not isometric to $T(X)$ 
\end{lem}

\begin{proof}
Using Equation \ref{eq:no1}, the discriminant group is generated
by the columns of 
\[
\left(\begin{array}{ccc}
-\frac{1}{2k'} & 0 & \frac{1}{3k'}\\
-\frac{1}{2k'} & \frac{1}{3} & -\frac{k''}{k'}\\
\frac{1}{3k'} & 0 & \frac{2k''}{3k'}
\end{array}\right)
\]
(where $k'=3k''+1$) and we remark that 
\[
c_{3}-2k''c_{1}=(\frac{1}{3},0,0)
\]
thus the discriminant group $\text{A}_{T}$ is generated by the columns
of 
\[
\left(\begin{array}{ccc}
-\frac{1}{2k'} & 0 & \frac{1}{3}\\
-\frac{1}{2k'} & \frac{1}{3} & 0\\
\frac{1}{3k'} & 0 & 0
\end{array}\right)
\]
which shows that, when $k'=1\mod3$, the discriminant group of $T$
is 
\[
\text{A}_{T}\simeq(\ZZ/3\ZZ)^{3}\times\ZZ/2k'\ZZ.
\]
It is not isomorphic to $\ZZ/18k'\ZZ\times\ZZ/3\ZZ$, therefore $T$
is not isometric to $T(X)$. 
\end{proof}

\subsubsection{Sub-case $v=(\frac{1}{3},0,\frac{1}{3})$ and $k=6\protect\mod9$}

Suppose that $k=3k'$ with $k'=2\mod3$. Let us prove 
\begin{lem}
The lattice $T$ associated to $v=(\frac{1}{3},0,\frac{1}{3})$ is
not isometric to $T(X)$ 
\end{lem}

\begin{proof}
One can check that, as abstract groups, the discriminant groups $\text{A}_{T(X)}$
and $\text{A}_{T}$ are isomorphic. Let us study the $3$-torsion
part, denoted by $\text{A}_{T}(3)$, of $\text{A}_{T}$ and the quadratic
form restricted to that part. Since $3\not|k'$, from \ref{eq:no2},
the generators of $\text{A}_{T}(3)$ are the columns $c_{1},c_{2},c_{3}$
of the matrix 
\[
\left(\begin{array}{ccc}
0 & 0 & \frac{1}{3}\\
0 & \frac{1}{3} & \frac{1}{3}\\
\frac{4}{3} & 0 & \frac{2}{9}(k'-1)
\end{array}\right).
\]
Since $k'\neq1\mod3$, the element $c_{1}$ is a multiple of $c_{3}$
and the generators of $\text{A}_{T}(3)$ are $c_{2},c_{3}$, with
intersection matrix 
\[
\left(\begin{array}{cc}
\frac{2}{3} & \frac{2k'}{3}\\
\frac{2k'}{3} & \frac{2k'}{9}(4k'-1)
\end{array}\right).
\]
Then $c_{3}'=c_{3}-kc_{2}$ is such that $c_{3}'c_{2}=0$ and $c_{3}'^{2}=\frac{2}{9}k'(k'-1)$.
We obtain that $\text{A}_{T}(3)$ is isometric to $(\frac{2}{3})+(\frac{2k'(k'-1)}{9}).$
Using that $k'=2,5$ or $8\mod9$, it is easy to check that in any
case $(\frac{2k'(k'-1)}{9})\simeq(\frac{4}{9})$. We conclude that
$\text{A}_{T}(3)$ is isometric to $(\frac{2}{3})+(\frac{4}{9}).$
But we know from Equation \ref{eq:no3onATX} that when $k'=2\mod3$,
the $3$-torsion part of $\text{A}_{T(X)}$ is $(\frac{2}{3})+(\frac{2}{9})$.
Therefore $T$ is not isometric to $T(X)$ when $k'=2\mod3$. 
\end{proof}

\subsection{Case $k=3k'$ and $v=(\frac{1}{3},0,\frac{2}{3})$}

Let $v=(\frac{1}{3},0,\frac{2}{3})\in\text{A}_{T(A)(3)}$, $v'=(\frac{1}{3},0,\frac{1}{3})$
and let $T_{v},T_{v'}$ be the associated over-lattices. 
\begin{lem}
The lattices $T_{v}$ and $T_{v'}$ are isomorphic. The lattice $T_{v}$
is isometric to $T(X)$ if and only if $k=0\mod9$. 
\end{lem}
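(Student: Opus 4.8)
The plan is to reduce both assertions to the single isometry of $T(A)(3)$ that negates the first basis vector. For the first assertion, I would invoke Proposition~\ref{prop:().-Nikulin1Over}: it suffices to produce an isometry $g$ of $L:=T(A)(3)$ whose induced map $\bar g$ on $\text{A}_{L}$ carries the isotropic subgroup $H_{v}$ onto $H_{v'}$. Both $v=(\frac13,0,\frac23)$ and $v'=(\frac13,0,\frac13)$ have the same square $-\frac23k$, which is $\equiv0$ in $\QQ/2\ZZ$ since $k=3k'$, and they differ only in the third coordinate, so the natural candidate for $g$ comes from the orthogonal splitting visible in the Gram matrix \eqref{eq:nom1}.

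Concretely, let $e_{1},e_{2},e_{3}$ be the basis of $T(A)(3)$ realizing \eqref{eq:nom1}; then $e_{1}$ spans an orthogonal summand $\langle -6k\rangle$, so the map $g$ fixing $e_{2},e_{3}$ and sending $e_{1}\mapsto -e_{1}$ is an isometry of $L$. In the coordinates of \eqref{eq:liste}, where $(a_{1},a_{2},a_{3})$ denotes the class of $a_{1}e_{1}+a_{2}e_{2}+a_{3}e_{3}\in\check L$, the isometry $\bar g$ negates the first coordinate and fixes the other two. I would then compute that $\bar g(v)=(-\frac13,0,\frac23)=(\frac23,0,\frac23)=2v'$ holds in $\text{A}_{L}$ (using $-\frac13\equiv\frac23$ in the $\ZZ/6k\ZZ$-component), and that $2v'$ generates $H_{v'}$; hence $\bar g(H_{v})=H_{v'}$, and Proposition~\ref{prop:().-Nikulin1Over} gives that $T_{v}$ and $T_{v'}$ are isomorphic over-lattices, in particular isometric lattices.

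For the second assertion I would transport to $T_{v}$ the results already proved for $v'$. The three sub-cases treated above show that $T_{v'}$ is isometric to $T(X)$ precisely when $k=0\mod9$, and is not isometric to $T(X)$ when $k=3$ or $6\mod9$; since these exhaust the possibilities under $k=0\mod3$, one gets $T_{v'}\simeq T(X)$ if and only if $k=0\mod9$. Combining this with the isometry $T_{v}\simeq T_{v'}$ yields the claim for $T_{v}$. The only delicate point is the bookkeeping in the middle step, namely confirming that $\bar g$ acts on $\text{A}_{L}$ by negating exactly the $\ZZ/6k\ZZ$-component; this is immediate from the orthogonality of $e_{1}$ to $e_{2},e_{3}$ in \eqref{eq:nom1}, so beyond that verification the lemma is a formal consequence of Proposition~\ref{prop:().-Nikulin1Over} together with the earlier sub-case lemmas.
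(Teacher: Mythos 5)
Your proof is correct and is essentially the paper's own argument: exhibit an explicit isometry of $T(A)(3)$ whose induced action on $\text{A}_{T(A)(3)}$ carries one of the isotropic subgroups $H_{v},H_{v'}$ onto the other, apply Proposition~\ref{prop:().-Nikulin1Over}, and then transport the three sub-case lemmas already proved for $v'$. The only difference is the choice of isometry: you use $\mathrm{diag}(-1,1,1)$ on the orthogonal splitting $\langle e_{1}\rangle\oplus\langle e_{2},e_{3}\rangle$, sending $v$ to $2v'=-v'$, whereas the paper uses an order-$6$ isometry acting on the $\langle e_{2},e_{3}\rangle$ block sending $v'$ to $v$; both are valid, and yours is marginally simpler to verify.
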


\begin{proof}
The matrix 
\begin{equation}
g=\left(\begin{array}{ccc}
1 & 0 & 0\\
0 & -1 & -3\\
0 & 1 & 2
\end{array}\right)\label{eq:g-Isometrie}
\end{equation}
is an order $6$ isometry of the lattice $T(A)(3)$ with Gram matrix
\[
Q_{T(A)(3)}=\left(\begin{array}{ccc}
-6k & 0 & 0\\
0 & 6 & 9\\
0 & 9 & 18
\end{array}\right)
\]
(one has $^{t}gQ_{T(A)(3)}g=Q_{T(A)(3)}$) and $g(\tfrac{1}{3}(v_{1}+v_{3}))=\tfrac{1}{3}(v_{1}+3v_{2}+2v_{3})$.
Its action on the discriminant group sends the group $H_{v'}$ generated
by the class of $v'=\tfrac{1}{3}(v_{1}+v_{3})$ in $\text{A}_{T(A)(3)}$
to the group $H_{v}$ generated by the class of $v=\tfrac{1}{3}(v_{1}+2v_{3})$
in $\text{A}_{T(A)(3)}$. Therefore, the element $g$ induces an isometry
between the over-lattices $T_{v'},T_{v}$ which are the pull-back
in $T(A)(3)\otimes\QQ$ of $H_{v'}$ and $H_{v}$. The lattice $T_{v}$
has therefore the same properties as the lattice $T_{v'}$ studied
in Section \ref{subsec:Case(1/3,0,1/3)}. 
\end{proof}

\section{Case $k=3k'+1$}

Let us prove Theorem \ref{thm:NumberOfTX} when $k=1\mod3$. When
$k=3k'+1$, the cases different from $(0,0,\frac{1}{3})$ are the
six generators $v$ in the following list 
\[
\begin{array}{c}
(\frac{1}{3},\frac{1}{3},0),(\frac{1}{3},\frac{1}{3},\frac{1}{3}),(\frac{1}{3},\frac{1}{3},\frac{2}{3}),\\
(\frac{1}{3},\frac{2}{3},0),(\frac{1}{3},\frac{2}{3},\frac{1}{3}),(\frac{1}{3},\frac{2}{3},\frac{2}{3}),
\end{array}
\]
of the six order $3$ isotropic groups $H_{v}$. 
\begin{lem}
Let $T$ be the over-lattice corresponding to the isotropic order
$3$ group $H_{v}=\left\langle v\right\rangle $, with $v$ in the
above list. The lattice $T$ is not isometric to $T(X)$. 
\end{lem}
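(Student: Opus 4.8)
The plan is to separate $T=T_{v}$ from $T(X)$ by a single coarse invariant of the discriminant group, its $3$-rank (equivalently the rank over $\FF_{3}$ of a Gram matrix), rather than by comparing the full torsion quadratic forms. The six generators in the list all have the shape $v=(\tfrac13,\tfrac{a}{3},\tfrac{b}{3})$ with $a\in\{1,2\}$ and $b\in\{0,1,2\}$, so the associated isotropic class is represented by $w=\tfrac13(v_{1}+av_{2}+bv_{3})\in\check{L}$, where $v_{1},v_{2},v_{3}$ is the basis of $L=T(A)(3)$ with Gram matrix \eqref{eq:nom1}. Since $v_{1}=3w-av_{2}-bv_{3}$, the triple $(w,v_{2},v_{3})$ is a $\ZZ$-basis of the index-$3$ over-lattice $T_{v}=L+\ZZ w$, and I would use it uniformly for all six cases.

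First I would compute the Gram matrix of $T_{v}$ in this basis. Using $v_{1}v_{2}=v_{1}v_{3}=0$, $v_{2}^{2}=6$, $v_{2}v_{3}=9$, $v_{3}^{2}=18$, one gets $wv_{2}=2a+3b$, $wv_{3}=3a+6b$, and $w^{2}=\tfrac13(-2k+2a^{2}+6ab+6b^{2})$, which is an integer because $a^{2}\equiv k\equiv 1\bmod 3$ (its precise value is irrelevant). Reducing modulo $3$, and using that $2a+3b\equiv 2a$ while $3a+6b$, $6$, $9$, $18$ all vanish, the Gram matrix becomes
\[
\begin{pmatrix}\overline{w^{2}} & 2a & 0\\ 2a & 0 & 0\\ 0 & 0 & 0\end{pmatrix}\in M_{3}(\FF_{3}).
\]
Because $a\in\{1,2\}$ we have $2a\not\equiv 0\bmod 3$, so the upper-left $2\times 2$ minor is invertible over $\FF_{3}$ and the matrix has $\FF_{3}$-rank exactly $2$, independently of $b$ and of $k$.

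From this I read off the discriminant group. For a rank-$3$ lattice the $3$-rank of $\text{A}_{T_{v}}$ equals $3$ minus the $\FF_{3}$-rank of a Gram matrix, hence equals $1$. Since $\lvert\text{A}_{T_{v}}\rvert=18k$ has $3$-part of order $9$ (as $k\equiv 1\bmod 3$), this $3$-part is cyclic, isomorphic to $\ZZ/9\ZZ$. On the other hand, the Gram matrix of $T(X)$ from Proposition \ref{prop:T(X)-unique-in-genus} reduces modulo $3$ to a matrix of rank $1$, so the $3$-part of $\text{A}_{T(X)}$ has $3$-rank $2$, that is, it is isomorphic to $(\ZZ/3\ZZ)^{2}$. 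The two discriminant groups are therefore non-isomorphic as abstract groups, and since isometric lattices have isomorphic discriminant groups, no $T_{v}$ can be isometric to $T(X)$.

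The point requiring care is not any hard estimate but the bookkeeping that makes the argument uniform: verifying that $(w,v_{2},v_{3})$ really is a $\ZZ$-basis of the index-$3$ over-lattice for every admissible $(a,b)$, and that the off-diagonal entries $wv_{2},wv_{3}$ collapse mod $3$ to $2a$ and $0$ regardless of $b$. Once this pattern is established the six cases reduce to a single computation and one never needs to compare quadratic forms; alternatively one could first use the order-$6$ isometry $g$ of \eqref{eq:g-Isometrie} to pair up the six generators and shrink the case list, but the mod-$3$ rank argument already disposes of all six at once.
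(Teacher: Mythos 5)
Your proof is correct, and it hinges on the same discriminating invariant as the paper's --- the isomorphism type of the discriminant group as an abstract group --- but reaches it by a genuinely different and more uniform route. The paper first uses the order-$6$ isometry $g$ of \eqref{eq:g-Isometrie} to check that the six generators form a single orbit on $\text{A}_{T(A)(3)}$, reducing to the single case $v=(\frac{1}{3},\frac{1}{3},0)$; it then inverts the Gram matrix of $T_{v}$ and manipulates the columns $c_{1},c_{2},c_{3}$ of the inverse to conclude that $\text{A}_{T_{v}}$ is cyclic of order $18k$, which rules out an isometry with $T(X)$ since $\text{A}_{T(X)}\simeq\ZZ/6k\ZZ\times\ZZ/3\ZZ$ is not cyclic. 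You instead treat the six cases at once: your verification that $(w,v_{2},v_{3})$ is a $\ZZ$-basis of $T_{v}$ is correct (the change-of-basis matrix from $(v_{1},v_{2},v_{3})$ has determinant $\frac{1}{3}$, matching the index), the products $wv_{2}=2a+3b$, $wv_{3}=3a+6b$ and $w^{2}=\frac{1}{3}(-2k+2a^{2}+6ab+6b^{2})\in\ZZ$ are right, and the reduced Gram matrix has $\FF_{3}$-rank exactly $2$ because its nonzero $2\times2$ minor equals $-4a^{2}\not\equiv0\bmod3$ independently of $\overline{w^{2}}$. The Smith-normal-form fact you invoke (the $3$-rank of the discriminant group is the corank of the Gram matrix over $\FF_{3}$) is standard, and combined with $\lvert\text{A}_{T_{v}}\rvert=18k$ and $3\nmid k$ it gives $3$-part $\ZZ/9\ZZ$ for $T_{v}$ versus $(\ZZ/3\ZZ)^{2}$ for $T(X)$, so no isometry can exist. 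What your argument buys is uniformity (no orbit reduction) and robustness (no inverse Gram matrix, only a rank computation mod $3$); what the paper's buys is the slightly stronger conclusion that $\text{A}_{T_{v}}$ is cyclic as a whole, plus the orbit argument under $g$, an isometry the paper reuses elsewhere (e.g.\ in Proposition \ref{prop:0mod9TvTvpIsomeorphess}).
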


\begin{proof}
The isometry $g$ in Equation \ref{eq:g-Isometrie} acts on the discriminant
group $\text{A}_{T(A)(3)}$ and the above $6$ elements form one orbit
for that action. It is therefore enough to study the over-lattice
$T$ corresponding to one of these elements, say $v=(\frac{1}{3},\frac{1}{3},0)$.
The Gram matrix of the corresponding over-lattice in some basis is
\[
Q_{T}=\left(\begin{array}{ccc}
-2k' & 1 & 0\\
1 & 6 & 9\\
0 & 9 & 18
\end{array}\right)
\]
the discriminant group has order $54k'+18=18k$ (here, $k=3k'+1$);
it is generated by the columns $c_{1},c_{2},c_{3}$ of 
\[
Q_{T}^{-1}=\left(\begin{array}{ccc}
-\frac{3}{2k} & \frac{1}{k} & -\frac{1}{2k}\\
\frac{1}{k} & \frac{2k'}{k} & -\frac{k'}{k}\\
-\frac{1}{2k} & -\frac{k'}{k} & \frac{4k-3}{18k}
\end{array}\right).
\]
In $\text{A}_{T}$, one has $c_{2}=2k'c_{1}$. Moreover, $c_{1}-3c_{3}=(0,0,-\frac{2}{3})$.
We observe that the column $2kc_{3}$ is $(0,0,\frac{4k-3}{9})$.
Since $4k-3$ is coprime to $9$, the group generated by $2kc_{3}$
contains $(0,0,-\frac{2}{3})$. We thus obtain that the discriminant
group is cyclic, generated by $c_{3}$: $T$ cannot be isometric to
$T(X)$. 
\end{proof}

\section{Preservation of $T(A)(3)$ into $T(X)$ under $O(T(X))$, proof of
the main Theorem in case $k\protect\neq0,6\protect\mod9$}

We recall that $T(X)$ is the lattice with Gram matrix 
\[
Q_{1}=\left(\begin{array}{ccc}
-6k & 0 & 0\\
0 & 6 & 3\\
0 & 3 & 2
\end{array}\right)
\]
in basis $\beta_{1}=(e_{1},e_{2},e_{3})$. Let $T_{1}\simeq T(A)(3)$
be the lattice generated by $e_{1},e_{2},3e_{3}$. Let us show that 
\begin{prop}
\label{prop:PreservationOfT(A)(3)}The orthogonal group $O(T(X))$
preserves $T_{1}$. 
\end{prop}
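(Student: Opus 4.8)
The plan is to exhibit $T_{1}$ as an \emph{intrinsic}, isometry-invariant sublattice of $T(X)$, so that its preservation under $O(T(X))$ becomes automatic rather than something that has to be checked element by element. The natural candidate is the sublattice of vectors whose pairing with all of $T(X)$ is divisible by $3$, namely
\[
T_{1}':=\{v\in T(X):\ v\cdot w\in 3\ZZ\ \text{for all}\ w\in T(X)\}=T(X)\cap 3\,\check{T(X)},
\]
where $\check{T(X)}$ is the dual lattice of Section 2. Since any $g\in O(T(X))$ carries both $T(X)$ and its dual $\check{T(X)}$ to themselves, the lattice $T(X)\cap 3\,\check{T(X)}$ is visibly $g$-invariant. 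Thus everything reduces to verifying the equality $T_{1}'=T_{1}$.

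First I would translate the defining condition into the basis $\beta_{1}=(e_{1},e_{2},e_{3})$: a vector $v=ae_{1}+be_{2}+ce_{3}$ lies in $T_{1}'$ if and only if $v\cdot e_{i}\in 3\ZZ$ for $i=1,2,3$, since the $e_{i}$ generate $T(X)$. Reading off the pairings from $Q_{1}$, one has $v\cdot e_{1}=-6ka$ and $v\cdot e_{2}=6b+3c$, both automatically in $3\ZZ$ for every value of $k$, while $v\cdot e_{3}=3b+2c\equiv 2c\ (\mathrm{mod}\ 3)$. Hence the only constraint is $c\equiv 0\ (\mathrm{mod}\ 3)$, which yields exactly $T_{1}'=\langle e_{1},e_{2},3e_{3}\rangle=T_{1}$. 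Equivalently, and perhaps more conceptually, I would reduce $Q_{1}$ modulo $3$ to the diagonal form $\mathrm{diag}(0,0,2)$, observe that the induced $\ZZ/3\ZZ$-valued bilinear form on $T(X)/3T(X)$ has a $2$-dimensional radical, and note that $T_{1}$ is precisely the preimage of this radical under the reduction map $T(X)\to T(X)/3T(X)$; the radical of a bilinear form is preserved by the reduction of any isometry, so this again makes $O(T(X))$-invariance transparent.

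The computation is uniform in $k$, and this uniformity is the one point genuinely worth checking: the constraint singling out $3e_{3}$ arises solely from the entry $e_{3}^{2}=2$ being a unit modulo $3$, whereas every remaining entry of $Q_{1}$ is divisible by $3$ independently of $k$. I do not expect a substantive obstacle; the conceptual crux is recognizing the right invariant characterization, and the only care needed is confirming that the reduced form modulo $3$ has radical of rank exactly $2$, so that $T_{1}'$ has index $3$ and coincides with $T_{1}$ rather than a larger or smaller sublattice. The explicit diagonalization $\mathrm{diag}(0,0,2)$ settles this at once. Once $T_{1}'=T_{1}$ is established, the invariance of $T_{1}$ under $O(T(X))$ follows immediately from the invariance of $T(X)$ and $\check{T(X)}$, completing the proof.
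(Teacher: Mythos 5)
Your proof is correct, and it reaches the conclusion by a somewhat more invariant route than the paper. The paper argues directly on isometry matrices: for $g=(a_{ij})\in O(T(X))$ it observes that $T_{1}$ is preserved precisely when $a_{31}\equiv a_{32}\equiv 0 \pmod 3$, and then reduces the relation $^{t}gQ_{1}g=Q_{1}$ modulo $3$ (where $Q_{1}\equiv\mathrm{diag}(0,0,2)$) to get $2a_{33}^{2}\equiv 2$ and $2a_{3i}a_{33}\equiv 0$ for $i=1,2$, hence $a_{33}\not\equiv 0$ and $a_{31}\equiv a_{32}\equiv 0\pmod 3$. You instead characterize $T_{1}$ intrinsically, as $\{v\in T(X):\ v\cdot w\in3\ZZ\ \forall w\in T(X)\}$, equivalently the preimage of the radical of the mod-$3$ reduction of the form, so that invariance under $O(T(X))$ is automatic; your basis computation ($v\cdot e_{1}=-6ka$, $v\cdot e_{2}=6b+3c$, $v\cdot e_{3}=3b+2c\equiv 2c\pmod 3$) correctly identifies this sublattice with $\langle e_{1},e_{2},3e_{3}\rangle=T_{1}$, and the rank-$2$ radical of $\mathrm{diag}(0,0,2)$ confirms the index is exactly $3$. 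Both proofs ultimately rest on the same fact, namely that $Q_{1}\bmod 3$ has rank one with radical spanned by $\bar{e}_{1},\bar{e}_{2}$, and the uniformity in $k$ that you flag is equally present in the paper's computation; what your packaging buys is generality and economy (no quantification over isometries, no case analysis on matrix entries, and an argument that transfers verbatim to any lattice and any prime), while the paper's computation yields the slightly finer explicit congruence conditions on the entries of an arbitrary isometry matrix, which it does not, however, need beyond the proposition itself.
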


\begin{proof}
Let $g=(a_{ij})_{1\leq i,j\leq3}\in O(T(X))$; one has $^{t}gQ_{1}g=Q_{1}$.
The lattice $T_{1}$ is preserved by $g$ if and only if $ge_{1},ge_{2},3ge_{3}\in T_{1}$.
Since $T_{1}=\left\langle e_{1},e_{2},3e_{3}\right\rangle $, this
is the case if and only if the coefficients $a_{31},a_{32}$ are $0\mod3$.
The relation $^{t}gQ_{1}g=Q_{1}$ implies that mod $3$, one has 
\[
\begin{array}{ccc}
^{t}gQ_{1}g= & \left(\begin{array}{ccc}
* & * & 2a_{31}a_{33}\\
* & * & 2a_{32}a_{33}\\
* & * & 2a_{33}^{2}
\end{array}\right) & =\left(\begin{array}{ccc}
0 & 0 & 0\\
0 & 0 & 0\\
0 & 0 & 2
\end{array}\right)\end{array},
\]
thus $a_{33}=1\text{ or }2\mod3$ and $a_{31}=a_{32}=0\mod3$, which
implies the result. 
\end{proof}
Let $T_{2}$ be a lattice with Gram matrix 
\[
Q_{2}=\left(\begin{array}{ccc}
-6k & 0 & 0\\
0 & 6 & 9\\
0 & 9 & 18
\end{array}\right)
\]
in some basis $v_{1},v_{2},v_{3}$; it is isomorphic to $T_{1}$.
Let $\iota:T_{2}\hookrightarrow T(X)$ be an embedding of lattices.
Let us identify $T_{2}$ with its image in $T(X)$ trough the embedding
$\iota$. 
\begin{prop}
Suppose that $k\neq0\text{ and }6\mod9$. Then $T_{1}=T_{2}$, in
other words: $T(X)$ contains a unique sub-lattice isomorphic to $T(A)(3)$. 
\end{prop}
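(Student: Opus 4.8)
The plan is to obtain $T_1=T_2$ as a short formal consequence of the three facts already available: the count $N^{over}=1$ coming from Theorem~\ref{thm:NumberOfTX}, the uniqueness of embeddings up to isometry from Corollary~\ref{cor:Unique-embedd-up-to-isom}, and the invariance of $T_1$ under the full orthogonal group from Proposition~\ref{prop:PreservationOfT(A)(3)}. First I would record that the hypothesis $k\neq 0,6\mod 9$ is exactly the range in which $N^{over}=1$: according to the table of Theorem~\ref{thm:NumberOfTX} the residues $k=1,2\mod 3$ and $k=3\mod 9$ all give $N^{over}=1$, while the two excluded residues $k=0\mod 9$ and $k=6\mod 9$ give $N^{over}=3$ and $2$. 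Consequently, among the $13$ subgroups of list~\eqref{eq:liste}, the generator $v_0=(0,0,\tfrac13)$ produces the \emph{only} isotropic order $3$ subgroup $H_{v_0}\subset\text{A}_{T(A)(3)}$ whose pull-back over-lattice is isometric to $T(X)$.

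Next I would apply Corollary~\ref{cor:Unique-embedd-up-to-isom} with $L=T(A)(3)$ and $M=T(X)$. The list of isotropic subgroups of $\text{A}_{T(A)(3)}$ whose associated over-lattice is isometric to $M$ now consists of the single group $H_{v_0}$, so the hypothesis of the corollary---that the embeddings $\iota_j\colon L\hookrightarrow L_j$ be mutually isomorphic---holds vacuously, there being only one such embedding. The corollary then gives that, up to isometries of $T(A)(3)$ and of $T(X)$, there is a unique embedding of $T(A)(3)$ into $T(X)$.

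Finally I would compare the given embedding $\iota\colon T_2\hookrightarrow T(X)$ with the standard embedding $\iota_0$ of image $T_1=\langle e_1,e_2,3e_3\rangle$. By the uniqueness just obtained these two embeddings are isomorphic as over-lattices, so the commutative diagram of Proposition~\ref{prop:().-Nikulin1Over} furnishes an isometry $g$ of $T(A)(3)$ and an isometry $\tilde g\in O(T(X))$ with $\tilde g\circ\iota_0=\iota\circ g$; passing to images and using $g(T(A)(3))=T(A)(3)$ yields $\tilde g(T_1)=T_2$. But Proposition~\ref{prop:PreservationOfT(A)(3)} asserts that every element of $O(T(X))$ preserves $T_1$, whence $\tilde g(T_1)=T_1$ and therefore $T_2=T_1$.

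The deduction above is essentially formal, and the genuine content---hence the main obstacle---lies entirely in the equality $N^{over}=1$, which is the output of the case-by-case lattice computations carried out in Sections~4 and~5. This is also why the excluded residues must be set aside: for $k=0,6\mod 9$ there are several over-lattices of $T(A)(3)$ isometric to $T(X)$ that are \emph{not} isomorphic as over-lattices, so Corollary~\ref{cor:Unique-embedd-up-to-isom} fails to apply and $T(X)$ genuinely contains more than one copy of $T(A)(3)$; this is precisely the mechanism behind the failure of (I)$\Rightarrow$(II) recorded in Theorem~\ref{thm:Main1}(b).
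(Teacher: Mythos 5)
Your proof of the proposition itself is correct, and it is in substance the paper's own argument: the paper makes the isometry explicit --- since by Theorem \ref{thm:NumberOfTX} the over-lattice $T(X)$ of $T_{2}$ must be $\left\langle T_{2},\frac{1}{3}v_{3}\right\rangle $, the basis $(v_{1},v_{2},\frac{1}{3}v_{3})$ of $T(X)$ has Gram matrix $Q_{1}$, so the base change defines an element of $O(T(X))$ carrying $T_{1}$ onto $T_{2}$ --- whereas you produce the same isometry abstractly via Corollary \ref{cor:Unique-embedd-up-to-isom}; both arguments then finish with Proposition \ref{prop:PreservationOfT(A)(3)}.

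Your closing paragraph, however, is wrong, and it contradicts results proved later in the paper. For $k=0$ or $6\mod9$, the several over-lattices of $T(A)(3)$ isometric to $T(X)$ \emph{are} isomorphic as over-lattices: this is exactly the content of Propositions \ref{prop:6mod9TvTvpIsomeorphess} and \ref{prop:0mod9TvTvpIsomeorphess}, and it is what makes Corollary \ref{cor:Unique-embedd-up-to-isom} applicable in precisely those cases (Proposition \ref{prop:ConverseImplic}), giving the implication (II) $\Rightarrow$ (I) of Theorem \ref{thm:Main1}. In particular, feeding those propositions into your own third paragraph shows that $T(X)$ contains a \emph{unique} sub-lattice isomorphic to $T(A)(3)$ for every $k$; the hypothesis $k\neq0,6\mod9$ in the present proposition only reflects that, at this point of the paper, Theorem \ref{thm:NumberOfTX} is the sole available input. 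The actual mechanism behind the failure of (I) $\Rightarrow$ (II) is Hodge-theoretic, not lattice-theoretic: the distinct over-lattices $T_{i}\subset T(A)(3)\otimes\QQ$, although isometric to $T(X)$ and isomorphic as over-lattices, carry non-isomorphic Hodge structures when endowed with one and the same general period $\o$ (Proposition \ref{prop:KummerNotIso}), because the isometries identifying them as over-lattices do not preserve $\o$. So the failure recorded in Theorem \ref{thm:Main1}(b) is not caused by several copies of $T(A)(3)$ sitting inside $T(X)$, but by several copies of $T(X)$ sitting over $T(A)(3)$ in $T(A)(3)\otimes\QQ$, which a fixed general period separates into non-isomorphic Hodge structures.
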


\begin{proof}
By Theorem \ref{thm:NumberOfTX}, the hypothesis on $k$ implies that
the over-lattice $T(X)$ of $T_{2}$ is obtained by $\frac{1}{3}v_{3}\in T(X)$.
Then $\beta_{2}=(v_{1},v_{2},\frac{1}{3}v_{3})$ is a basis of $T(X)$.
These vectors have intersection matrix $Q_{2}$ equal to $Q_{1}$.
Let $P$ be the base-change matrix between these two basis, one has
\[
Q_{1}={}^{t}PQ_{2}P
\]
and since $Q_{1}=Q_{2}$, the matrix $P$ sending the base $\beta_{1}$
to the base $\beta_{2}$ defines an element of $O(T(X))$. Since $O(T(X))$
preserves $T_{1}$, the vectors $v_{1},v_{2},v_{3}$ are in $T_{1}$,
thus $T_{1}=T_{2}$. 
\end{proof}
\begin{cor}
When $k\neq0\text{ and }6\,\mod9$, the hypothesis of Theorem \ref{thm:Main3}
are satisfied, and therefore it proves Theorem \ref{thm:Main1} in
these cases. 
\end{cor}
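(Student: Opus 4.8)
The plan is to verify the two hypotheses of Theorem~\ref{thm:Main3} and then to read off the equivalence (I)$\Leftrightarrow$(II) of Theorem~\ref{thm:Main1}~a). Recall that Theorem~\ref{thm:Main3} requires, first, that $T(A)(3)\otimes\QQ$ contain a \emph{unique} over-lattice $T$ of $T(A)(3)$ isometric to $T(X)$, and second, that this $T$ contain a \emph{unique} sub-lattice isometric to $T(A)(3)$.

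First I would dispose of the first hypothesis. By definition $N^{over}$ is exactly the number of over-lattices of $T(A)(3)$ inside $T(A)(3)\otimes\QQ$ that are isometric to $T(X)$, so the first hypothesis amounts to $N^{over}=1$. The condition $k\neq0,6\mod9$ singles out the residues $1,2,3,4,5,7,8\mod9$; among these, the cases $k=1\text{ or }2\mod3$ (covering $1,2,4,5,7,8$) and $k=3\mod9$ are precisely the columns of Theorem~\ref{thm:NumberOfTX} with $N^{over}=1$, the only excluded columns $k=0\mod9$ and $k=6\mod9$ being those with $N^{over}=3$ and $N^{over}=2$. Since the over-lattice attached to $v_{0}=(0,0,\tfrac{1}{3})$ is always isometric to $T(X)$, this lone over-lattice is then the unique one, giving the first hypothesis.

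Next I would invoke the Proposition established just above: combining Proposition~\ref{prop:PreservationOfT(A)(3)}, that every isometry of $T(X)$ preserves $T_{1}\simeq T(A)(3)$, with the base-change argument shows that, exactly when $k\neq0,6\mod9$, the lattice $T(X)$ contains a unique sub-lattice isometric to $T(A)(3)$. This is the second hypothesis of Theorem~\ref{thm:Main3}.

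With both hypotheses in hand, Theorem~\ref{thm:Main3} applies and yields the equivalence between the Hodge isometry $(T(A),\CC\o_{A})\simeq(T(B),\CC\o_{B})$, which is assertion (I), and the Hodge isometry $(T(\Km_{3}(A)),\CC\o_{\Km_{3}(A)})\simeq(T(\Km_{3}(B)),\CC\o_{\Km_{3}(B)})$, which is assertion (II'). Finally, since $X=\Km_{3}(A)$ has Picard number $19>2+\ell$, the result of \cite{HLOY2} recalled in the introduction identifies (II') with (II). Chaining these gives (I)$\Leftrightarrow$(II), which is Theorem~\ref{thm:Main1}~a). I do not expect a genuine obstacle here, since the substance of the argument was already carried out in Theorem~\ref{thm:NumberOfTX} and in the uniqueness of the sub-lattice; the present step is mostly bookkeeping. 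The one point deserving care is the case matching, namely verifying that the excluded residues $0$ and $6\mod9$ are precisely the columns of Theorem~\ref{thm:NumberOfTX} with $N^{over}>1$, and checking that the two ``unique'' clauses in the hypotheses of Theorem~\ref{thm:Main3} correspond respectively to $N^{over}=1$ and to the uniqueness of the sub-lattice isometric to $T(A)(3)$.
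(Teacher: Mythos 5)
Your proposal is correct and follows essentially the same route as the paper: Theorem \ref{thm:NumberOfTX} gives $N^{over}=1$ exactly in the residue classes $k\neq 0,6\mod 9$ (first hypothesis), the proposition immediately preceding the corollary gives the uniqueness of the sub-lattice isometric to $T(A)(3)$ (second hypothesis), and Theorem \ref{thm:Main3} together with the identification of (II) and (II') via \cite{HLOY2} from the introduction yields Theorem \ref{thm:Main1}~a). The only nitpick is your phrase ``exactly when $k\neq0,6\mod9$'' for the sub-lattice uniqueness: the paper's proposition only asserts the implication in that direction, which is all that is needed.
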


\section{Cases $L_{X}^{2}=6k$ with $k=0$ or $6\protect\mod9$}

\subsection{On the implication (II) $\Rightarrow$ (I) in cases $k=0$ or $6\protect\mod9$}

Suppose that $k=0$ or $6\mod9$. Let $v_{1},v_{2},\dots$ be the
generators of the (distinct) isotropic groups $H_{1},H_{2},\dots$
of $\text{A}_{T(A)(3)}$ such that the corresponding over-lattices
$T_{1},T_{2},...$ are isometric to $T(X)$. Let $(B,G_{B})$ be another
abelian surface with an order $3$ symplectic automorphism group. 
\begin{prop}
\label{prop:ConverseImplic}Suppose that $k=0$ or $6\mod9$ and that
the over-lattices $T_{1},T_{2},\dots$ of $T(A)(3)$ are isomorphic.
If there exists an isomorphism of Hodge structures 
\[
\psi:(T(\Km_{3}(A)),\CC\o_{\Km_{3}(A)})\to(T(\Km_{3}(B)),\CC\o_{\Km_{3}(B)})
\]
then there exists an isomorphism of Hodge structures 
\[
(T(A),\CC\o_{A})\simeq(T(B),\CC\o_{B}).
\]
\end{prop}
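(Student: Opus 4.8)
The plan is to transport the whole situation into the single lattice $T(\Km_{3}(B))$ and to compare there the two index-$3$ sublattices coming from $A$ and from $B$. First I would record the Hodge-theoretic input: the pushforwards $\pi_{A*}\colon T(A)(3)\hookrightarrow T(\Km_{3}(A))$ and $\pi_{B*}\colon T(B)(3)\hookrightarrow T(\Km_{3}(B))$ are isometries onto index-$3$ sublattices $S_{A},S_{B}$ and are compatible with the periods, i.e. $\pi_{A*}(\CC\omega_{A})=\CC\omega_{\Km_{3}(A)}$ and similarly for $B$. Composing $\pi_{A*}$ with the given Hodge isometry $\psi$, the image $\psi(S_{A})\subset T(\Km_{3}(B))$ is again an index-$3$ sublattice isometric to $T(A)(3)\simeq T(B)(3)$, and $\psi|_{S_{A}}$ is a Hodge isometry onto it; thus $(\psi(S_{A}),\CC\omega_{\Km_{3}(B)})$ is Hodge-isometric to $(T(A),\CC\omega_{A})$ after rescaling by $\tfrac13$. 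The statement to prove then reduces to: the two index-$3$ sublattices $\psi(S_{A})$ and $S_{B}$ of $T(\Km_{3}(B))$ carry isomorphic Hodge structures for the common period line $\CC\omega_{\Km_{3}(B)}$.

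Next I would use the hypothesis. Because the over-lattices $T_{1},T_{2},\dots$ of $T(A)(3)$ isometric to $T(X)$ are isomorphic, Proposition \ref{prop:().-Nikulin1Over} together with Corollary \ref{cor:Unique-embedd-up-to-isom} gives that, up to isometries of $T(A)(3)$ and of $T(X)\simeq T(\Km_{3}(B))$, there is a unique embedding of $T(A)(3)$ into $T(\Km_{3}(B))$. Dualising, $O(T(\Km_{3}(B)))$ acts transitively on the finite set $\mathcal{S}$ of index-$3$ sublattices isometric to $T(A)(3)$ (whose cardinality is $N^{over}$ by Theorem \ref{thm:NumberOfTX}); in particular there exists $\rho\in O(T(\Km_{3}(B)))$ with $\rho(\psi(S_{A}))=S_{B}$, so that $\rho\circ\psi$ matches the two sublattices as \emph{lattices}.

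The main obstacle is exactly that $\rho$ need not be a Hodge isometry, so $\rho\circ\psi$ need not preserve $\CC\omega_{\Km_{3}(B)}$ and the matching above is a priori only lattice-theoretic. To upgrade it I would work in $T(\Km_{3}(B))\otimes\QQ$, where $\psi(S_{A})$ and $S_{B}$ share the complexification and hence the same rational Hodge structure, and show that a rational Hodge isometry (an element of $O(T(\Km_{3}(B))\otimes\QQ)$ fixing $\CC\omega_{\Km_{3}(B)}$) already carries the lattice $\psi(S_{A})$ onto $S_{B}$; such isometries form a strictly larger group than the integral Hodge isometries, and the precise content of the cases $k=0$ or $6\mod 9$ is that this larger group still acts transitively on the finitely many sublattices of $\mathcal{S}$. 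Concretely I would inspect the explicit isometries of $T(A)(3)$ realising the isomorphisms of over-lattices in the proof of Theorem \ref{thm:NumberOfTX} (for instance the order-$6$ isometry $g$ of Equation \ref{eq:g-Isometrie}, and the further isometry that identifies the $(0,0,\tfrac13)$-sublattice with the remaining ones under the standing hypothesis) and check directly that one carrying $\psi(S_{A})$ to $S_{B}$ can be chosen to preserve the period line. Granting this, $\rho$ may be taken Hodge, so $\rho\circ\psi|_{S_{A}}\colon S_{A}\to S_{B}$ is a Hodge isometry, and rescaling by $\tfrac13$ through $\pi_{A*},\pi_{B*}$ produces the desired Hodge isometry $(T(A),\CC\omega_{A})\simeq(T(B),\CC\omega_{B})$. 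I expect this last step—showing that the integral ambiguity between the $N^{over}$ sublattices is absorbed by a genuine Hodge isometry rather than by an arbitrary element of $O(T(\Km_{3}(B)))$—to be the crux of the argument.
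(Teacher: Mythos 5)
Your reduction---push everything into $T(\Km_{3}(B))$ via $\psi$ and compare the two index-$3$ sublattices $\psi(S_{A})$ and $S_{B}$---is the same as the paper's, and your appeal to Corollary \ref{cor:Unique-embedd-up-to-isom} to get transitivity of $O(T(\Km_{3}(B)))$ on the set $\mathcal{S}$ of sublattices isometric to $T(A)(3)$ is sound. But the step you defer (``granting this'') is a genuine gap, and the repair you sketch cannot work. The missing idea is Proposition \ref{prop:PreservationOfT(A)(3)}: \emph{every} isometry of $T(X)$ preserves the sublattice $T_{1}=\langle e_{1},e_{2},3e_{3}\rangle$. Combined with the transitivity you already have, this forces $\mathcal{S}$ to be the singleton $\{T_{1}\}$: the orbit of $T_{1}$ under $O(T(\Km_{3}(B)))$ is $\{T_{1}\}$ itself, so $T(\Km_{3}(B))$ contains exactly \emph{one} sublattice isometric to $T(A)(3)$, not $N^{over}$ of them. (Your cardinality claim conflates two different counts: $N^{over}$ in Theorem \ref{thm:NumberOfTX} counts over-lattices of a \emph{fixed} copy of $T(A)(3)$ inside $T(A)(3)\otimes\QQ$, not sublattices of $T(X)$; the hypothesis that those over-lattices are isomorphic is exactly what collapses the sublattice count to one.) Consequently $\psi(S_{A})=S_{B}$ on the nose, the correcting isometry $\rho$ is the identity, and the restriction of $\psi$, rescaled by $\tfrac{1}{3}$, is already the desired Hodge isometry $(T(A),\CC\omega_{A})\simeq(T(B),\CC\omega_{B})$. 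This is precisely how the paper argues.

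The repair you propose instead---inspecting the explicit isometries realizing the isomorphisms of over-lattices (the $g$ of Equation \eqref{eq:g-Isometrie}, the $\tau_{a}$, etc.) and checking that one carrying $\psi(S_{A})$ to $S_{B}$ preserves the period line---would come out negative: for a general period $\omega$, the only isometries of the relevant lattices fixing $\CC\omega$ are $\pm\mathrm{id}$. Indeed this rigidity is the very engine of Proposition \ref{prop:KummerNotIso}, which the paper uses to prove that (I) does \emph{not} imply (II) in these cases; a ``rational Hodge isometry'' acting nontrivially on the finitely many candidate sublattices does not exist for general $\omega$. So any argument along your lines must show that no nontrivial $\rho$ is needed at all, i.e. that the sublattice is unique as a subset---which is what Proposition \ref{prop:PreservationOfT(A)(3)} supplies and what your proposal lacks.
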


\begin{proof}
Since the over-lattices $T_{1},T_{2},\dots$ are isomorphic, by Corollary
\ref{cor:Unique-embedd-up-to-isom}, there is a unique embedding of
$T(A)(3)$ in $T(\Km_{3}(A))$ up to isometries. Moreover, by Proposition
\ref{prop:PreservationOfT(A)(3)}, any isometries of $T(\Km_{3}(A))$
preserves the image of $T(A)(3)$, thus we recover uniquely $(T(A),\CC\o_{A})$
from $(T(\Km_{3}(A)),\CC\o_{\Km_{3}(A)})$. The isomorphism $\psi$
must send $T(A)(3)$ to $T(B)(3)$, and therefore it induces an isomorphism
of Hodge structures between $(T(A),\CC\o_{A})$ and $(T(B),\CC\o_{B})$. 
\end{proof}
The next sub-section shows that the over-lattices $T_{1},T_{2},\dots$
of $T(A)(3)$ are indeed isomorphic, so that the hypothesis of Proposition
\ref{prop:ConverseImplic} are satisfied, and that will prove the
implication (II) $\Rightarrow$ (I) of Theorem \ref{thm:Main1}.

\subsection{On the orthogonal group of $T(A)(3)$ and isomorphic over-lattices}

Recall that $T(A)(3)$ has a basis with Gram matrix 
\begin{equation}
\left(\begin{array}{ccc}
-6k & 0 & 0\\
0 & 6 & 9\\
0 & 9 & 18
\end{array}\right).\label{eq:nom1-1}
\end{equation}

\begin{prop}
\label{prop:T(A)(3)UniqueGenusOrth}The lattice $T(A)(3)$ is unique
in its genus and the map 
\[
O(T(A)(3))\to O(\text{A}_{T(A)(3)})
\]
is surjective. 
\end{prop}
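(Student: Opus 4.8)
The plan is to establish both assertions simultaneously through Nikulin's local--global machinery (\cite[Theorem 1.14.2 and Corollary 1.9.4]{Nikulin}), reducing the problem to a prime-by-prime analysis of the discriminant form. The lattice $T(A)(3)$, with Gram matrix \eqref{eq:nom1-1}, has signature $(2,1)$ and is therefore indefinite with $t_{+}=2\geq1$, $t_{-}=1\geq1$; moreover it splits orthogonally as $\langle-6k\rangle\oplus A$, where $A$ is the rank-two summand of Gram matrix $\left(\begin{smallmatrix}6&9\\9&18\end{smallmatrix}\right)$ (isometric to $\left(\begin{smallmatrix}6&3\\3&6\end{smallmatrix}\right)$ in a suitable basis). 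Since the discriminant group decomposes over primes as $\text{A}_{T(A)(3)}=\bigoplus_{p}(\text{A}_{T(A)(3)})_{p}$ and correspondingly $O(\text{A}_{T(A)(3)})=\prod_{p}O((\text{A}_{T(A)(3)})_{p})$, it suffices to treat each prime separately. For every $p\neq3$ the $p$-component is cyclic (it comes only from $\langle-6k\rangle$, the summand $A$ being $3$-primary), so its length is $1\leq\rk T(A)(3)-2$ and Nikulin's inequality criterion applies directly: the local class is determined and every isometry of $(\text{A}_{T(A)(3)})_{p}$ lifts.

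The whole difficulty is concentrated at $p=3$, where we fall outside Nikulin's stable range, and this is the main obstacle. Using the two residue cases $k\equiv0$ and $k\equiv6\mod9$, I would first record the $3$-adic Jordan decomposition explicitly: the summand $A$ contributes two scales ($3$ and $3^{2}$), while $\langle-6k\rangle$ contributes a further copy of $\ZZ/9\ZZ$ when $k\equiv6$, respectively $\ZZ/3^{a}\ZZ$ with $a\geq3$ when $k\equiv0$, so that $(\text{A}_{T(A)(3)})_{3}$ has length $3=\rk T(A)(3)$ and the inequality-based criterion is genuinely unavailable. For uniqueness in the genus I would instead invoke Eichler's theorem for indefinite lattices of rank $\geq3$, by which the number of classes equals the number of proper spinor genera. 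I would then compute the local spinor-norm group $\theta\!\left(O^{+}(T(A)(3)\otimes\ZZ_{3})\right)$ from the two-scale Jordan splitting and check, together with the real place where $\theta$ exhausts $\RR^{*}$ by indefiniteness, that the spinor norms cover the id\`eles; hence there is a single spinor genus and $T(A)(3)$ is unique in its genus.

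For surjectivity of $O(T(A)(3))\to O(\text{A}_{T(A)(3)})$ I would lift a generating set of $O((\text{A}_{T(A)(3)})_{3})$ to honest isometries. Negating each orthogonal summand, and the order-$6$ isometry $g$ of Equation \eqref{eq:g-Isometrie} acting on the $A$-part, realise all automorphisms that respect the Jordan filtration; when the three $3$-adic scales are distinct (the case $k\equiv0\mod9$, with scales $3,9,3^{a}$) these already generate $O((\text{A}_{T(A)(3)})_{3})$, each scale-homogeneous block carrying only the group $\{\pm1\}$. The genuinely new generators occur when $k\equiv6\mod9$, where $\langle-6k\rangle$ and $A$ each contribute a factor of scale $3^{2}$: the \emph{mixing} isometries interchanging these equal-scale factors must be produced by hand. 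The key point making the lift possible is again indefiniteness: the obstruction to lifting an isometry of $q_{3}$ lies in a product of local spinor-type quotients, and the real place contributes the full group $\RR^{*}$, so the global obstruction vanishes once the purely $3$-adic part is matched by the explicit isometries above. Constructing these mixing isometries at $3$ and carrying out the spinor-norm computation in the two residue cases is where essentially all the work lies; everything at the remaining primes and at the real place is formal.
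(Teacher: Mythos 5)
Your plan runs on the right machinery: Eichler's spinor-genus theory plus lifting of finite isometries is exactly what underlies this statement, and it is what the paper's own proof invokes in packaged form. The paper verifies that the discriminant form of $T(A)(3)$ is $2$-regular, $3$-pseudoregular and $p$-regular for $p\geq5$, and then quotes \cite[Ch.~VIII, Theorem 7.5(4)]{MirMor}, which is precisely the one-class-per-(spinor-)genus computation together with the surjectivity statement that you propose to redo by hand. The difficulty is that your write-up postpones exactly the steps that would constitute the proof: the $3$-adic spinor-norm computation and the construction of lattice isometries hitting a generating set of $O((\text{A}_{T(A)(3)})_{3})$ are announced (``where essentially all the work lies'') but never carried out, so as it stands neither assertion of the proposition is established.

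Moreover, the one concrete structural claim you do commit to, in order to dispose of the case $k=0\mod9$, is false. You assert that when the three $3$-adic scales are distinct, the isometries respecting the orthogonal splitting (each homogeneous block carrying only $\{\pm1\}$), together with the image of $g$ from \eqref{eq:g-Isometrie}, generate $O((\text{A}_{T(A)(3)})_{3})$. Finite quadratic forms over $\ZZ_{3}$ with \emph{distinct} scales still admit isometries that do not preserve the splitting: on $(\frac{2}{3})\oplus(\frac{2}{9})$, with generators $e_{1},e_{2}$ of orders $3,9$, the map
\[
e_{1}\mapsto e_{1}+6e_{2},\qquad e_{2}\mapsto e_{1}+4e_{2}
\]
is bijective and preserves the form ($q(e_{1}+6e_{2})=\tfrac{2}{3}+8\equiv\tfrac{2}{3}$, $q(e_{1}+4e_{2})=\tfrac{38}{9}\equiv\tfrac{2}{9}$, and the pairing of the two images is $\tfrac{2}{3}+24\cdot\tfrac{2}{9}=6\equiv0$), yet it is visibly not a product of sign changes. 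Crucially, the same phenomenon occurs \emph{across} your orthogonal decomposition: since $1-3^{a-1}u'$ is $\equiv1\mod3$ and hence a square modulo $3^{a+1}$, one produces in the same way isometries of $(\frac{2}{9})\oplus(\frac{u}{3^{a+1}})$ mixing the scale-$9$ part of the $A$-summand with the summand coming from $\langle-6k\rangle$. Every generator you propose (sign changes, and $g$, which acts trivially on the $\langle-6k\rangle$-part) preserves the decomposition $(\frac{u}{3^{a+1}})\oplus q_{A}$, so the group they generate sits inside $\{\pm1\}\times O(q_{A})$, which is a proper subgroup of $O((\text{A}_{T(A)(3)})_{3})$; your reduction therefore collapses, and in the case $k=6\mod9$ the list of missing ``mixing'' isometries is likewise larger than the single swap of the two scale-$9$ blocks you plan for. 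Closing this gap means constructing and lifting all of these extra isometries, which is the actual content of the proposition --- or arguing as the paper does, by checking Miranda--Morrison's regularity conditions and citing \cite[Ch.~VIII, Theorem 7.5(4)]{MirMor}.
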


\begin{proof}
The discriminant group $\mathrm{A}_{T(A)(3)}$ is (isomorphic to)
$\ZZ/9\ZZ\times\ZZ/3\ZZ\times\ZZ/6k\ZZ$. For a prime $p$, the length
$\ell_{p}$ of the $p$-torsion subgroup is $\ell_{3}=3$, $\ell_{p}=1$
for $p\neq3$ dividing $k$, otherwise $\ell_{p}=0$.

We use the book \cite{MirMor}, and we refer to \cite[Chapter VIII, Definition 7.4]{MirMor}
for the definition of regular or pseudo regular primes of quadratic
forms. By \cite[Chapter VIII, Lemma 7.7(1)]{MirMor}, the quadratic
form $Q$ is $2$-regular, by \cite[Chapter VIII, Lemma 7.6(3)]{MirMor}
$Q$ is $3$-pseudoregular, and by \cite[Chapter VIII, Lemma 7.6(1)]{MirMor}
it is $p$-regular for any prime $p\geq5$. One can therefore apply
\cite[Chapter VIII, Theorem 7.5 (4)]{MirMor} to conclude that the
genus of $T(A)(3)$ is $\{T(A)(3)\}$ and that the natural map $O(T(A)(3))\to O(\mathrm{A}{}_{T(A)(3)})$
is surjective. 
\end{proof}
If $L$ is a torsion quadratic module, there is a decomposition $L=\oplus_{p}L_{p}$
into $p$-torsion elements, and 
\[
O(L)=\prod_{p}O(L_{p}),
\]
where $O$ is the orthogonal group i.e. the group preserving the quadratic
form. The discriminant group $\text{A}_{T(A)(3)}$ has quadratic form
\[
Q=\left(\begin{array}{ccc}
-\tfrac{1}{6k} & 0 & 0\\
0 & \tfrac{2}{3} & -\tfrac{1}{3}\\
0 & -\tfrac{1}{3} & \tfrac{2}{9}
\end{array}\right).
\]
Let us prove the following result: 
\begin{prop}
\label{prop:6mod9TvTvpIsomeorphess}Suppose that $k=6\mod9$ and consider
$v_{0}=(0,0,\frac{1}{3})$ and $v_{1}=(\frac{1}{3},0,0)$. The associated
over-lattices $T_{v_{0}},T_{v_{1}}$ are isomorphic. 
\end{prop}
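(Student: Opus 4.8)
The plan is to show that $v_0=(0,0,\frac{1}{3})$ and $v_1=(\frac{1}{3},0,0)$ generate isotropic subgroups that lie in the same orbit under the action of $O(\text{A}_{T(A)(3)})$ on $\text{A}_{T(A)(3)}$, and then invoke the surjectivity of $O(T(A)(3))\to O(\text{A}_{T(A)(3)})$ together with Proposition \ref{prop:().-Nikulin1Over}. By Proposition \ref{prop:6mod9}, the hypothesis $k=6\mod9$ guarantees that $T_{v_1}$ is indeed isometric to $T(X)$ (and $T_{v_0}$ always is, by the remark preceding Theorem \ref{thm:NumberOfTX}), so both $v_0,v_1$ generate isotropic subgroups whose over-lattices are isometric, which is the prerequisite for them to be comparable. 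The key reduction is that, by Proposition \ref{prop:().-Nikulin1Over}, the over-lattices $T_{v_0}$ and $T_{v_1}$ are isomorphic if and only if there is an isometry $g\in O(T(A)(3))$ with $\bar g(H_{v_0})=H_{v_1}$; and by Proposition \ref{prop:T(A)(3)UniqueGenusOrth} it suffices to produce such an isometry directly at the level of the discriminant group $\text{A}_{T(A)(3)}$.

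Next I would exploit the product decomposition $O(\text{A}_{T(A)(3)})=\prod_p O(\text{A}_p)$ recorded just before the statement. The two elements $v_0,v_1$ have very simple supports: $v_0=(0,0,\frac{1}{3})$ lives in the $3$-torsion part (indeed in the $\ZZ/9\ZZ$ summand), while $v_1=(\frac{1}{3},0,0)$ lives in the $\ZZ/6k\ZZ$ summand, whose $3$-torsion part is $\ZZ/3\ZZ$ since $k=6\mod9$ forces $3\parallel k$ (exactly one factor of $3$). Writing $6k=2\cdot3\cdot k$ with $k=3k''$, $3\nmid k''$, the $3$-part of $\text{A}_{T(A)(3)}$ is $\ZZ/9\ZZ\times\ZZ/3\ZZ\times\ZZ/3\ZZ$, generated by the third, second, and a $3$-torsion generator of the first coordinate. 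Both $H_{v_0}$ and $H_{v_1}$ are order-$3$ subgroups of this $3$-part; the squares recorded in list \eqref{eq:liste} are $v_0^2=0$ and $v_1^2=-\frac{2}{3}k\equiv 0\mod 2\ZZ$ (since $k\equiv0\mod3$), so both are isotropic and, more importantly, both generators have the \emph{same} induced form, namely the zero form on $\ZZ/3\ZZ$. The hard part is therefore to construct an explicit isometry of the $3$-part of $\text{A}_{T(A)(3)}$ carrying the isotropic line $H_{v_0}$ onto $H_{v_1}$ while preserving the quadratic form $Q$ restricted to the $3$-torsion; on the prime-to-$3$ part one takes the identity, which automatically fixes both subgroups.

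Concretely, I would compute the $3$-torsion quadratic form in the generators $f_1$ (the order-$3$ element $\frac{1}{3}(\,\cdot\,)$ in the first coordinate), $f_2=(0,\frac{1}{3},0)$, $f_3=(0,0,\frac{1}{3})$, using the matrix $Q$ above, and then verify that both $f_3$ and $f_1$ are isotropic vectors with the same orthogonal complement structure inside this rank-$3$ quadratic $\ZZ/3\ZZ$-space. Since a nondegenerate or mildly degenerate quadratic form over $\FF_3$ acts transitively on isotropic vectors of a fixed norm outside the radical (a standard Witt-type extension argument), I would exhibit an explicit $3\times3$ matrix over $\ZZ/9\ZZ$ (respectively $\ZZ/3\ZZ$ on the lower-order summands) sending $f_3\mapsto f_1$ and preserving $Q\mod 2\ZZ$; a convenient candidate is a transvection or a permutation-type swap of the isotropic directions. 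The main obstacle I anticipate is bookkeeping: the generator of the $3$-part of the first coordinate has order $3$ while $f_3$ has order $9$, so the sought isometry cannot literally interchange generators of different orders, and one must instead check that $v_1=\frac{1}{3}(\,\cdot\,)$ in the first coordinate is equated with the order-$3$ element $3f_3=(0,0,\frac{3}{9})$ of the $\ZZ/9\ZZ$ summand; establishing that these two order-$3$ isotropic lines are conjugate under $O(\text{A}_3)$—rather than merely checking norms—is the delicate point, and I would resolve it by the transitivity of the orthogonal group on isotropic lines of the $\FF_3$-quadratic space obtained by passing to the $3$-torsion subgroup. Once such a $\bar g$ is produced, Proposition \ref{prop:T(A)(3)UniqueGenusOrth} lifts it to $g\in O(T(A)(3))$, and Proposition \ref{prop:().-Nikulin1Over} yields the desired isomorphism $T_{v_0}\simeq T_{v_1}$.
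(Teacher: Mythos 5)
Your high-level skeleton is the same as the paper's: produce $\bar g\in O(\text{A}_{T(A)(3)})$ with $\bar g(H_{v_0})=H_{v_1}$, lift it through the surjection $O(T(A)(3))\to O(\text{A}_{T(A)(3)})$ of Proposition \ref{prop:T(A)(3)UniqueGenusOrth}, and conclude by Proposition \ref{prop:().-Nikulin1Over}. But the execution has a genuine gap, in two parts. First, your computation of the $3$-part of the discriminant group is wrong: for $k=6\mod9$ one has $k=3k''$ with $3\nmid k''$, hence $6k=18k''$ and the $3$-part of the summand $\ZZ/6k\ZZ$ is $\ZZ/9\ZZ$, not $\ZZ/3\ZZ$ (you forgot the factor $3$ inside $6$); the $3$-part of $\text{A}_{T(A)(3)}$ is $\ZZ/9\ZZ\times\ZZ/3\ZZ\times\ZZ/9\ZZ$. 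This is not harmless bookkeeping: under your claimed structure $\ZZ/9\ZZ\times\ZZ/3\ZZ\times\ZZ/3\ZZ$, the element $v_{0}=3\cdot(0,0,\frac{1}{9})$ is divisible by $3$ while $v_{1}$ would generate a direct $\ZZ/3\ZZ$ summand and hence not be divisible by $3$; since group automorphisms preserve divisibility, no $\bar g$ could exist and the strategy would fail outright. The correct structure is precisely what saves the statement: both $v_{0}$ and $v_{1}$ are $3$ times generators of the two $\ZZ/9\ZZ$ summands.

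Second, and more seriously, the step you call delicate cannot be resolved by ``transitivity of the orthogonal group on isotropic lines of the $\FF_{3}$-quadratic space obtained by passing to the $3$-torsion subgroup.'' The quadratic form restricted to the $3$-torsion subgroup $\text{A}[3]\simeq(\FF_{3})^{3}$ is degenerate: it vanishes identically on the $3$-divisible part, since $q(3y)=9q(y)\equiv0\mod2\ZZ$ for $y$ in a $\ZZ/9\ZZ$ summand, and likewise all pairings against such elements are trivial. In particular the restricted form on $\text{A}[3]$ is literally the same for $k=3\mod9$ and for $k=6\mod9$, and an automorphism of $\text{A}[3]$ preserving it need not extend to an isometry of the full torsion module. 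So your argument, as written, would equally ``prove'' the proposition for $k=3\mod9$, where the conclusion is false: by Theorem \ref{thm:NumberOfTX} (and Proposition \ref{prop:6mod9}), $T_{v_{1}}$ is then not even isometric to $T(X)$. The actual content of the hypothesis $k=6\mod9$ is that $\left(-\tfrac{1}{6k}\right)_{3}=\left(\tfrac{2}{9}\right)$ rather than $\left(\tfrac{4}{9}\right)$, i.e.\ the values of $q$ on elements $f$ with $3f=v_{1}$ match those on elements $f$ with $3f=v_{0}$; your proof never uses the hypothesis at this point (only through Proposition \ref{prop:6mod9} and the erroneous group structure). The paper closes exactly this gap by computing $Q_{3}=\left(\tfrac{2}{9}\right)\oplus\bigl(\begin{smallmatrix}\frac{2}{3} & -\frac{1}{3}\\ -\frac{1}{3} & \frac{2}{9}\end{smallmatrix}\bigr)$ and exhibiting an explicit automorphism $\tau$ of $\ZZ/9\ZZ\times\ZZ/3\ZZ\times\ZZ/9\ZZ$ that swaps $(\frac{1}{9},0,0)$ and $(0,0,\frac{1}{9})$ and satisfies $^{t}\tau Q_{3}\tau\equiv Q_{3}$ (integral off-diagonal, even diagonal differences); no general transitivity theorem for finite quadratic modules is invoked, and to repair your proof you would need either such an explicit $\tau$ or a correctly stated orbit result at the level of the full torsion module, not of $\text{A}[3]$.
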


\begin{proof}
In order to prove that the over-lattices $T_{v_{0}},T_{v_{1}}$ are
isomorphic, one must show that there exists an element $O(T(A)(3))$
that acts on $\text{A}_{T(A)(3)}$ by sending $v_{0}$ to $v_{1}$.
But by Proposition \ref{prop:T(A)(3)UniqueGenusOrth}, the map $O(T(A)(3))\to O(\text{A}_{T(A)(3)})$
is surjective, therefore it is sufficient to prove that there exists
$\tau\in O(\text{A}_{T(A)(3)})$ sending $v_{0}$ to $v_{1}$. Since
we supposed that $k=6\mod9$, one has $\left(-\tfrac{1}{6k}\right)_{3}=\left(\frac{2}{9}\right)$,
thus on the $3$-torsion part, the quadratic form is 
\[
Q_{3}=\left(\begin{array}{ccc}
\frac{2}{9} & 0 & 0\\
0 & \tfrac{2}{3} & -\tfrac{1}{3}\\
0 & -\tfrac{1}{3} & \tfrac{2}{9}
\end{array}\right).
\]
The matrix 
\[
\tau=\left(\begin{array}{ccc}
0 & -6 & 1\\
0 & 1 & 0\\
1 & 6 & 0
\end{array}\right)
\]
defines an automorphism of the ($3$-torsion part of the) group $\text{A}_{T(A)(3)}=\ZZ/9\ZZ\times\ZZ/3\ZZ\times\ZZ/6k\ZZ$
and satisfies $^{t}\tau Q_{3}\tau-Q_{3}=\left(\begin{array}{ccc}
0 & 1 & 0\\
1 & 12 & -1\\
0 & -1 & 0
\end{array}\right)$, thus this is an element of the orthogonal group of $\text{A}_{T(A)(3)}$.
The transformation exchanges $(\frac{1}{9},0,0)$ and $(0,0,\frac{1}{9})$,
thus it also exchanges $v_{1}=(\frac{1}{3},0,0)$ and $v_{0}=(0,0,\frac{1}{3})$. 
\end{proof}
\begin{prop}
\label{prop:0mod9TvTvpIsomeorphess}Suppose that $k=0\mod9$. Let
be $v_{0}=(0,0,\frac{1}{3})$, $w_{1}=(\frac{1}{3},0,\frac{1}{3})$
and $w_{2}=(\frac{1}{3},0,\frac{2}{3})$, the elements of $\text{A}_{T(A)(3)}$.
The associated over-lattices $T_{v_{0}},T_{w_{1}},T_{w_{2}}$ are
isomorphic. 
\end{prop}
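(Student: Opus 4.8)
The plan is to follow the method of Proposition \ref{prop:6mod9TvTvpIsomeorphess}. By Proposition \ref{prop:().-Nikulin1Over}, two index-$3$ over-lattices $T_{v},T_{v'}$ of $T(A)(3)$ are isomorphic as over-lattices as soon as some isometry of $T(A)(3)$ carries the isotropic subgroup $H_{v}$ onto $H_{v'}$; and by Proposition \ref{prop:T(A)(3)UniqueGenusOrth} the map $O(T(A)(3))\to O(\text{A}_{T(A)(3)})$ is surjective, so it suffices to produce the required automorphisms directly on the discriminant group. Moreover, the isomorphism $T_{w_{1}}\simeq T_{w_{2}}$ has already been obtained via the order-$6$ isometry $g$ of Equation \ref{eq:g-Isometrie}, which sends $H_{w_{1}}$ to $H_{w_{2}}$ and is an isometry of $T(A)(3)$ for every $k$. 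Hence it remains only to exhibit one element of $O(\text{A}_{T(A)(3)})$ sending $H_{v_{0}}$ to $H_{w_{1}}$; transitivity then yields the three over-lattices isomorphic.

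First I would pass to the $3$-torsion part, using $O(\text{A}_{T(A)(3)})=\prod_{p}O((\text{A}_{T(A)(3)})_{p})$ and that $v_{0},w_{1}$ are $3$-torsion. Writing $k=3^{\a}k_{0}$ with $\a\geq2$ and $k_{0}$ coprime to $3$, the $3$-torsion part is $\ZZ/3^{\a+1}\ZZ\times\ZZ/3\ZZ\times\ZZ/9\ZZ$ on generators $g_{1},e_{2},e_{3}$ with $q(g_{1})=(\tfrac{-2k_{0}}{3^{\a+1}})$, $q(e_{2})=\tfrac{2}{3}$, $q(e_{3})=\tfrac{2}{9}$ and bilinear pairing $b(e_{2},e_{3})=-\tfrac{1}{3}$, all other pairings zero. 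In these coordinates $v_{0}=6e_{3}$ and $w_{1}=3^{\a}g_{1}+6e_{3}$, and one checks that $3^{\a}g_{1}$, $6e_{3}$ and $w_{1}$ all lie in the totally isotropic plane $U=\langle 3^{\a}g_{1},6e_{3}\rangle\simeq(\ZZ/3\ZZ)^{2}$. On $U$ the map I want is the shear fixing $3^{\a}g_{1}$ and sending $v_{0}=6e_{3}$ to $3^{\a}g_{1}+6e_{3}=w_{1}$; the required $\tau\in O(\text{A}_{T(A)(3)})$ must restrict to this shear.

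The key difference with the case $k=6\mod9$ is that there the first and third cyclic summands were both $\ZZ/9\ZZ$ with identical form $(\tfrac{2}{9})$, so $\tau$ could simply transpose the two slots; here the first summand $\ZZ/3^{\a+1}\ZZ$ is strictly larger than $\ZZ/9\ZZ$, so $\tau$ cannot be a mere swap but must mix the slots while preserving the bilinear form. Concretely I would look for $\tau$ fixing $3^{\a}g_{1}$, of the shape $e_{3}\mapsto e_{3}+s\,g_{1}+r\,e_{2}$ and $g_{1}\mapsto g_{1}+u\,e_{3}+w\,e_{2}$ (with $e_{2}$ possibly also moved), the integer $s$ chosen so that $6s\equiv3^{\a}\pmod{3^{\a+1}}$ forces $\tau(v_{0})=w_{1}$, and the remaining coefficients tuned so that $^{t}\tau Q_{3}\tau\equiv Q_{3}$ (diagonal entries modulo $2\ZZ$, off-diagonal modulo $\ZZ$), exactly as in the verification of Proposition \ref{prop:6mod9TvTvpIsomeorphess}. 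Equivalently, one may phrase $\tau$ as a torsion Eichler transvection attached to the isotropic vector $3^{\a}g_{1}$.

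The main obstacle is precisely this verification. The shearing coefficient $s$ is forced to have $3$-adic valuation $\a-1$, so the correction $s^{2}q(g_{1})$ is trivial modulo $2\ZZ$ only when $\a\geq3$; for $\a=2$ it contributes a residual term in $\tfrac{1}{3}\ZZ/2\ZZ$ that must be absorbed using the order-$3$ generator $e_{2}$ of norm $\tfrac{2}{3}$ together with $b(e_{2},e_{3})=-\tfrac{1}{3}$, and similarly the pairing $b(\tau g_{1},\tau e_{3})$ must be brought back to $0$ modulo $\ZZ$. I therefore expect the computation to split into the subcases $\a\geq3$ and $\a=2$, the latter further according to $k_{0}\bmod3$, the delicate point being that the norm-matching congruences reduce to solvability of $r^{2}-r\equiv k_{0}\pmod3$ and of quadratic residues modulo $9$, where the set of unit squares $\{1,4,7\}$ is exactly what makes the required matches possible. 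Once an explicit $\tau$ preserving $Q_{3}$ and realizing the shear on $U$ is written down, Proposition \ref{prop:T(A)(3)UniqueGenusOrth} lifts it to $O(T(A)(3))$ and Proposition \ref{prop:().-Nikulin1Over} gives $T_{v_{0}}\simeq T_{w_{1}}$, completing the proof together with the already known $T_{w_{1}}\simeq T_{w_{2}}$.
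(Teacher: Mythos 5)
Your reduction steps are exactly the ones the paper uses: the isometry $g$ of Equation \eqref{eq:g-Isometrie} gives $T_{w_{1}}\simeq T_{w_{2}}$, Proposition \ref{prop:T(A)(3)UniqueGenusOrth} (surjectivity of $O(T(A)(3))\to O(\text{A}_{T(A)(3)})$) together with Proposition \ref{prop:().-Nikulin1Over} reduces everything to finding $\tau\in O(\text{A}_{T(A)(3)})$ carrying $H_{v_{0}}$ to $H_{w_{1}}$ or $H_{w_{2}}$, and one restricts to the $3$-torsion part. But that last step is the entire content of the proposition, and your text never carries it out: ``I would look for $\tau$\dots'', ``I therefore expect the computation to split\dots'', ``Once an explicit $\tau$\dots is written down\dots''. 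The paper's proof \emph{is} that computation: four explicit matrices $\tau_{a}$ (one for each isometry class $u=2,4$ of the $3$-adic form $\left(\frac{u}{3^{a+1}}\right)$, i.e.\ each residue $k_{0}\bmod 3$, and separately for $a\geq3$ and $a=2$), each verified to be an automorphism of the group with ${}^{t}\tau_{a}Q_{3}\tau_{a}-Q_{3}$ integral with even diagonal, and each checked to send $\langle v_{0}\rangle$ to $\langle w_{1}\rangle$ or $\langle 2w_{2}\rangle$. A plan for this computation, however plausible, is not a proof of existence. (Two smaller slips: the $3$-part of $q$ on the first generator is $-(2k_{0})^{-1}/3^{\a+1}$, which equals your $-2k_{0}/3^{\a+1}$ only after an isometry, i.e.\ a unit rescaling of the slot; and whether $3^{\a}g_{1}+6e_{3}$ generates $H_{w_{1}}$ or $H_{w_{2}}$ depends on $k_{0}\bmod 3$ --- harmless here only because the $g$-step is already in place.)

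Moreover the gap is not merely one of bookkeeping, because the ansatz you propose fails as literally stated. Take $\tau(e_{3})=e_{3}+s\,g_{1}+r\,e_{2}$ with $s=\s 3^{\a-1}$, $\s$ prime to $3$, and $\tau(g_{1})=g_{1}+m\,e_{3}+n\,e_{2}$ (coefficient exactly $1$ on $g_{1}$, as a transvection fixing $3^{\a}g_{1}$ would have), and write $q(g_{1})=\frac{u}{3^{\a+1}}$ with $u$ prime to $3$, $b$ the associated $\QQ/\ZZ$-valued bilinear form. Preservation of $q(g_{1})$ requires $m^{2}+3n^{2}-3mn\equiv0\bmod 9$, hence $3\mid m$ and $n=0$; but then
\[
b(\tau e_{3},\tau g_{1})\;=\;\frac{\s u+2m}{9}\ \bmod\ \ZZ,
\]
which cannot vanish since $\s u+2m\equiv\s u\not\equiv0\bmod 3$, contradicting $b(e_{3},g_{1})=0$. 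So no isometry of this shape exists, whatever correction you put on $e_{2}$: the $g_{1}$-coefficient of $\tau(g_{1})$ must itself be perturbed away from $1$ (compare the paper's matrices, whose $(1,1)$-entry is $2\cdot3^{a-1}-1$, so that $3^{a}g_{1}\mapsto-3^{a}g_{1}$ is not even fixed). A corrected ansatz can be made to work, but identifying it and verifying the congruences in all subcases is precisely the missing core of the argument.
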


\begin{proof}
The isometry in Equation \eqref{eq:g-Isometrie} of $T(A)(3)$ sends
$w_{1}$ to $w_{2}$. By Proposition \ref{prop:T(A)(3)UniqueGenusOrth},
the map $O(T(A)(3))\to O(\text{A}_{T(A)(3)})$ is onto, therefore,
in order to prove that the over-lattices $T_{v_{0}},T_{w_{1}},T_{w_{2}}$
are isomorphic, it is sufficient to find an element $\tau\in O(\text{A}_{T(A)(3)})$
sending $v_{0}$ to an element in $\{w_{1},2w_{1},w_{2},2w_{2}\}$.
The group $\text{A}_{T(A)(3)}$ is isomorphic to 
\[
\ZZ/6k\ZZ\times\ZZ/3\ZZ\times\ZZ/9\ZZ
\]
through the map $(\frac{a}{6k},\frac{b}{3},\frac{c}{9})\to(a,b,c)$
and we will often identify $(\frac{a}{6k},\frac{b}{3},\frac{c}{9})$
with $(a,b,c)$. The discriminant group $\text{A}_{T(A)(3)}$ has
quadratic form 
\[
Q=\left(\begin{array}{ccc}
-\tfrac{1}{6k} & 0 & 0\\
0 & \tfrac{2}{3} & -\tfrac{1}{3}\\
0 & -\tfrac{1}{3} & \tfrac{2}{9}
\end{array}\right).
\]
At prime $3$, the quadratic form is 
\[
Q_{3}=\left(\begin{array}{ccc}
\tfrac{u}{3^{a+1}} & 0 & 0\\
0 & \tfrac{2}{3} & -\tfrac{1}{3}\\
0 & -\tfrac{1}{3} & \tfrac{2}{9}
\end{array}\right),
\]
where $k=3^{a}t$, for $a\geq2$, $t$ coprime to $3$ and $u$ is
even such that $u(-2t)=1\mod3^{a+1}$. The torsion quadratic module
$\left(\frac{u}{3^{a+1}}\right)$ is isometric to $\left(\frac{2}{3^{a+1}}\right)$
or $\left(\frac{4}{3^{a+1}}\right)$, and the isometry sends the order
$3$ element element $(3^{a},0,0)$ to $(w3^{a},0,0)$ with $w\in\{1,2\}$.
It remains therefore to study the following two possibilities: \\
 $\bullet$ Suppose that $u=2$ and $a\geq3$. The matrix 
\[
\tau_{a}=\left(\begin{array}{ccc}
2\cdot3^{a-1}-1 & -14\cdot3^{a} & 8\cdot3^{a-1}\\
0 & -4 & 1\\
7 & -48 & 11
\end{array}\right).
\]
define an endomorphism of the group 
\[
\text{A}_{T(A)(3)}=\ZZ/3^{a+1}\ZZ\times\ZZ/3\ZZ\times\ZZ/9\ZZ
\]
(for example, the $(3,2)$ entry is the well-defined map $\ZZ/3\ZZ\to\ZZ/9\ZZ$,
$\bar{x}\to\overline{-48x}$). The matrix 
\[
\left(\begin{array}{ccc}
2\cdot3^{a-1}-1 & -14\cdot3^{a} & 7\cdot3^{a-1}\\
3 & -10 & 2\\
20 & -87 & 17
\end{array}\right)
\]
also define an endomorphism and one can check that its product with
$\tau_{a}$ acts by the identity on $\text{A}_{T(A)(3)}$, thus $\tau_{a}$
is an automorphism of $\text{A}_{T(A)(3)}$. One computes that 
\[
^{t}\tau_{a}Q_{3}\tau_{a}-Q_{3}=\left(\begin{array}{ccc}
10+8\cdot3^{a-3} & -56\cdot(3^{a-2}+1) & 32\cdot3^{a-3}+13\\
-56\cdot(3^{a-2}+1) & 392(3^{a-1}+1)+2 & -224\cdot3^{a-2}-89\\
32\cdot3^{a-3}+13 & -224\cdot3^{a-2}-89 & 128\cdot3^{a-3}+20
\end{array}\right)
\]
therefore if $a\geq3$, the entries of the above matrix are integers
and the diagonal entries are even numbers, so that $\tau_{a}$ preserves
the quadratic form of $\text{A}_{T(A)(3)}$: it is an element of $O(\text{A}_{T(A)(3)}).$
One has $\tau_{a}(0,0,3)=(8\cdot3^{a},3,33)=(2\cdot3^{a},0,6)$, therefore
the image by $\tau_{a}$ of the isotropic group generated by $v_{0}=(0,0,3)$
is the isotropic group generated by $w_{1}=(3^{a},0,3)$: that implies
that the two associated over-lattices are isomorphic. \\
 In case $a=2$, it is not difficult to check that the matrix $\tau_{2}=\left(\begin{array}{ccc}
5 & -18 & 3\\
2 & -10 & 2\\
17 & -69 & 14
\end{array}\right)$ gives an isometry of $\text{A}_{T(A)(3)}$ with the same properties
as above when $a\geq3$.\\
 $\bullet$ Suppose that $u=4$ and $a\geq3$. The matrix 
\[
\t_{a}=\left(\begin{array}{ccc}
4\cdot3^{a-1}-1 & -14\cdot3^{a} & 8\cdot3^{a-1}\\
1 & -10 & 2\\
13 & -78 & 16
\end{array}\right).
\]
define an automorphism of the group $\text{A}_{T(A)(3)}$. One has
\[
^{t}\t_{a}Q_{3}\t_{a}-Q_{3}=\left(\begin{array}{ccc}
26+64\cdot3^{a-3} & -224\cdot3^{a-2}-144 & 128\cdot3^{a-3}+30\\
-224\cdot3^{a-2}-144 & 784\cdot3^{a-1}+898 & -448\cdot3^{a-2}-185\\
128\cdot3^{a-3}+30 & -448\cdot3^{a-2}-185 & 38+256\cdot3^{a-3}
\end{array}\right),
\]
so that $\t_{a}$ is an element of $O(\text{A}_{T(A)(3)})$ when $a\geq3$.
Since 
\[
\t_{a}(0,0,3)=(8\cdot3^{a},6,48)=(2\cdot3^{a},0,3),
\]
the image by $\t_{a}$ of the isotropic group generated by $v_{0}=(0,0,3)$
is the isotropic group generated by $2w_{2}=(2\cdot3^{a},0,3)$: that
implies that the two associated over-lattices are isomorphic. \\
 When $a=2$, it is not difficult to check that the matrix $\t_{2}=\left(\begin{array}{ccc}
2 & -126 & 24\\
0 & -4 & 1\\
5 & -66 & 14
\end{array}\right)$ gives an isometry with the same properties as above. 
\end{proof}
\begin{cor}
\label{cor:23}Suppose that $k=3k'$, with $k'=0\text{ or 2}\mod3$.
Suppose that there is an isomorphism of Hodge structures 
\[
(T(\Km_{3}(A)),\CC\o_{\Km_{3}(A)})\simeq(T(\Km_{3}(B)),\CC\o_{\Km_{3}(B)}).
\]
Then there is an isomorphism of Hodge structures 
\[
(T(A),\CC\o_{A})\simeq(T(B),\CC\o_{B}).
\]
\end{cor}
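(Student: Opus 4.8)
The plan is to deduce the corollary directly from Proposition \ref{prop:ConverseImplic}, whose only hypothesis (beyond $k=0$ or $6\mod9$) is that all over-lattices of $T(A)(3)$ isometric to $T(X)$ be mutually isomorphic. The first step is a translation of the arithmetic condition: writing $k=3k'$, the case $k'=0\mod3$ is exactly $k=0\mod9$, and the case $k'=2\mod3$ is exactly $k=6\mod9$. Hence the hypothesis ``$k'=0$ or $2\mod3$'' covers precisely the two residue classes $k=0,6\mod9$ addressed in this section, and I would split the argument accordingly.

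Next I would identify, via Theorem \ref{thm:NumberOfTX}, the relevant over-lattices in each case. For $k=6\mod9$ the generators $w$ whose over-lattices $T_w$ are isometric to $T(X)$ are $(0,0,\tfrac{1}{3})$ and $(\tfrac{1}{3},0,0)$; for $k=0\mod9$ they are $(0,0,\tfrac{1}{3})$, $(\tfrac{1}{3},0,\tfrac{1}{3})$, and $(\tfrac{1}{3},0,\tfrac{2}{3})$. I would then invoke Proposition \ref{prop:6mod9TvTvpIsomeorphess} in the first case and Proposition \ref{prop:0mod9TvTvpIsomeorphess} in the second: these assert exactly that the listed over-lattices are isomorphic to one another (each is shown isomorphic to $T_{v_0}$ by exhibiting an element of $O(\text{A}_{T(A)(3)})$ sending $v_0$ to the appropriate generator, which lifts to $O(T(A)(3))$ by the surjectivity established in Proposition \ref{prop:T(A)(3)UniqueGenusOrth}). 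This verifies precisely the hypothesis needed by Proposition \ref{prop:ConverseImplic}.

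Finally, with that hypothesis in hand, Proposition \ref{prop:ConverseImplic} applies verbatim: from the given Hodge isometry $(T(\Km_3(A)),\CC\o_{\Km_3(A)})\simeq(T(\Km_3(B)),\CC\o_{\Km_3(B)})$ it produces the desired isomorphism of Hodge structures $(T(A),\CC\o_A)\simeq(T(B),\CC\o_B)$. Since the substantive work (constructing the isometries of the discriminant group and checking that they permute the relevant isotropic subgroups) has already been carried out in the two cited propositions, there is no real obstacle remaining; the only point requiring care is the bookkeeping that matches the congruence class of $k'$ to that of $k$ and thereby selects the correct proposition in each of the two cases.
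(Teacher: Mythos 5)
Your proposal is correct and follows exactly the paper's own argument: the paper likewise observes that the hypothesis $k'=0$ or $2\mod 9$ places $k$ in the residue classes $0,6\mod 9$, where Propositions \ref{prop:6mod9TvTvpIsomeorphess} and \ref{prop:0mod9TvTvpIsomeorphess} show all over-lattices of $T(A)(3)$ isometric to $T(X)$ are isomorphic, and then applies Proposition \ref{prop:ConverseImplic}. Your write-up merely spells out the bookkeeping (which generators from Theorem \ref{thm:NumberOfTX} are in play in each case) that the paper leaves implicit.
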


\begin{proof}
In these cases, all the over-lattices that are isometric to $T(X)$
are isomorphic, thus we can apply Proposition \ref{prop:ConverseImplic}. 
\end{proof}

\subsection{About the implication (I) $\Rightarrow$ (II) }

Suppose that $k=0$ or $6\mod9$. Let $v_{1},v_{2},\dots$ be generators
of the isotropic groups $H_{1},H_{2},\dots$ of $\text{A}_{T(A)(3)}$
such that the corresponding over-lattices $T_{1},T_{2},...$ of $T(A)(3)$
are isometric to $T(X)$. We know from Propositions \ref{prop:6mod9TvTvpIsomeorphess}
and \ref{prop:0mod9TvTvpIsomeorphess} that these lattices $T_{1},T_{2},\dots$
are isomorphic. 
\begin{prop}
\label{prop:KummerNotIso}Let be $i\neq j$ and let $\omega$ defining
a Hodge structure on $T_{i}\otimes\QQ=T_{j}\otimes\QQ$. The Hodge
structures $(T_{i},\CC\o)$ and $(T_{j},\CC\o)$ are not isomorphic
for a general period $\o$. 
\end{prop}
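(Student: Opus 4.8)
The plan is to show that an isomorphism of the two Hodge structures would produce a single rational isometry of $T(A)(3)\otimes\QQ$ carrying $T_i$ onto $T_j$ while fixing the period line $\CC\o$, and that such an isometry can exist only for a meager set of periods. First I would suppose given an isomorphism $\phi\colon(T_i,\CC\o)\to(T_j,\CC\o)$ of Hodge structures. Since $T_i$ and $T_j$ are full-rank lattices in the common rational quadratic space $V:=T(A)(3)\otimes\QQ$, the isometry $\phi$ extends $\QQ$-linearly to some $g\in O(V)$ with $g(T_i)=T_j$. As the two Hodge structures share the \emph{same} period, one has $V^{2,0}=\CC\o$ for both, so being a morphism of Hodge structures forces $g(\CC\o)=\CC\o$; because $g$ is defined over $\QQ$ it also fixes $\overline{\CC\o}=V^{0,2}$, hence $g$ is a self-Hodge-isometry of $(V,\CC\o)$. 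Finally, since $H_i\neq H_j$ are distinct isotropic subgroups, the associated over-lattices satisfy $T_i\neq T_j$, so $g(T_i)=T_j\neq T_i$; thus $g$ is not $\pm\mathrm{id}$, and as the only scalar isometries of a quadratic space are $\pm\mathrm{id}$, the isometry $g$ is non-scalar.

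Next I would bound the periods that such a $g$ can fix. The relevant period domain is the open subset $D=\{[\o]\in\PP(V\otimes\CC) : \o^2=0,\ \o\bar{\o}>0\}$ of the conic $\{\o^2=0\}\subset\PP(V\otimes\CC)\cong\PP^2$, a one-dimensional complex manifold. A period $[\o]$ is fixed by $g$ precisely when $\o$ spans an eigenline of $g$ in $V\otimes\CC$. Since $g$ is non-scalar, its fixed locus in $\PP^2$ is a proper closed subset, namely finitely many eigenlines together with at most one $\PP^1$ arising from a two-dimensional eigenspace; as an irreducible conic contains no line, this fixed locus meets the conic in only finitely many points. Hence each non-scalar $g$ fixes at most finitely many periods in $D$.

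I would then collect these loci over all candidate isometries. If nonempty, the set of lattice isometries $T_i\to T_j$ is a torsor under $O(T_i)$, which consists of integral matrices and is therefore countable, so the set of $g\in O(V)$ with $g(T_i)=T_j$ is countable. Consequently the locus of periods admitting a Hodge isomorphism $(T_i,\CC\o)\simeq(T_j,\CC\o)$ is contained in $\bigcup_{g(T_i)=T_j}\{[\o]\in D : g(\CC\o)=\CC\o\}$, a countable union of finite sets and hence a proper (indeed meager, measure-zero) subset of the positive-dimensional manifold $D$. For a general period $\o$ lying outside this countable union no Hodge isomorphism exists, so $(T_i,\CC\o)\not\simeq(T_j,\CC\o)$.

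The hard part will be the middle step: verifying that a non-scalar rational isometry cannot fix a positive-dimensional family of periods. This reduces to the elementary fact that the fixed locus of a non-scalar linear map on $\PP^2$ is a proper subvariety and that an irreducible conic contains no line, so the intersection is finite; once this finiteness is in hand, the countability of the admissible $g$'s makes the ``general period'' conclusion automatic. (Equivalently, one is asserting that for a general $\o$ the only Hodge isometries of $(V,\CC\o)$ are $\pm\mathrm{id}$, which fix each $T_i$ and so cannot realize $T_i\to T_j$.)
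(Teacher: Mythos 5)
Your proof is correct, and it turns on the same two facts as the paper's proof: countability of the group of integral isometries, and finiteness of the fixed locus of a non-scalar isometry acting on the period conic. The organization, however, is genuinely different. The paper argues by contradiction and makes essential use of the isomorphism of over-lattices $h\colon T_{i}\to T_{j}$ supplied by Propositions \ref{prop:6mod9TvTvpIsomeorphess} and \ref{prop:0mod9TvTvpIsomeorphess}: it reduces to comparing $(T_{j},\o)$ with $(T_{j},h_{\CC}(\o))$, and if a general $\o$ admitted an isomorphism then, $O(T_{j})$ being countable, a single $g\in O(T_{j})$ would satisfy $g_{\CC}(\o)=h_{\CC}(\o)$ for infinitely many $\o$, hence for all $\o$ (two homographies of $\PP^{1}$ agreeing at infinitely many points coincide), forcing $h=\pm g$ by faithfulness of the action on the quadric and contradicting $T_{i}\neq T_{j}$. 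You instead bound the exceptional locus directly: every Hodge isometry $(T_{i},\CC\o)\to(T_{j},\CC\o)$ extends to a rational isometry of $T(A)(3)\otimes\QQ$ carrying $T_{i}$ onto $T_{j}$ and fixing the point $[\o]$ of the conic; such isometries form a countable set (a torsor under $O(T_{i})$), and each one, being non-scalar because $T_{i}\neq T_{j}$, fixes only finitely many points of the conic, so the bad periods form a countable set in a one-dimensional period domain. This union bound dispenses with both the pigeonhole step and the over-lattice isomorphism: mere distinctness of $T_{i}$ and $T_{j}$ inside $T(A)(3)\otimes\QQ$ suffices, so your argument is slightly more general (it does not care whether $T_{i}$ and $T_{j}$ are isomorphic as over-lattices, which is the hypothesis the surrounding section works to verify), and it exhibits the exceptional set explicitly as a countable union of finite sets. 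What the paper's route buys in exchange is that it runs through exactly the object ($h$) constructed in the preceding propositions, keeping the section's narrative thread.
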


\begin{proof}
Let us recall some facts about integral Hodge structures of K3 type,
for which a reference is \cite[Section 7.2.3]{Voisin}. A Hodge structure
on a rank $3$ lattice $T$ of signature $(2,1)$ is the data of a
point $\o\in\PP(T\otimes\CC)$ such that $w^{2}=0$ and $w\bar{w}>0$,
for $w\in\o=\CC w$. Then the space $T\otimes\CC$ decomposes as 
\[
T\otimes\CC=\CC w\oplus\CC t\oplus\CC\bar{w},
\]
where $\CC t$ is the orthogonal complement of the space $\CC w\oplus\CC\bar{w}$;
this is a real subspace: $\overline{\CC t}=\CC t$. The set of Hodge
structures is an (euclidian) open subset in the smooth quadric $Q_{T}\simeq\PP^{1}$
defined by $\o^{2}=0$. In fact, this is the complement of the real
axis in $\CC\subset\PP^{1}=Q_{T}$, in particular, it is biholomorphic
to $\mathcal{H}\cup\overline{\mathcal{H}}$, where $\mathcal{H}$
is the complex upper-plane (see \cite[Section 2.3]{Barth}). Two Hodge
structures $\o,\o'$ on $T$ are isomorphic if and only if there exists
an isometry $g\in O(T)$ such that 
\[
g_{\CC}(\o)=\o',
\]
where $g_{\CC}$ is the complexification of $g$ (an isometry of $T$
acts as an homography on $\mathcal{H}\cup\overline{\mathcal{H}}\subset Q_{T}$).

The fixed point set of a projective automorphism $g$ acting on $\PP(T\otimes\CC)$
is a union of linear subspaces, thus the stabilizer group (in the
projective automorphism group) of the smooth quadric $Q_{T}$ acts
faithfully on $Q_{T}$.

Since we suppose that the over-lattices $T_{i}$ and $T_{j}$ are
isomorphic, there exist isometries $h_{0}\in O(T(A)(3))$ and $h:T_{i}\to T_{j}$
such that the following diagram is commutative 
\[
\begin{array}{ccc}
T(A)(3) & \stackrel{}{\hookrightarrow} & T_{i}\\
\downarrow h_{0} &  & \downarrow h\\
T(A)(3) & \stackrel{}{\hookrightarrow} & T_{j}
\end{array}.
\]
We consider $T_{i}$ and $T_{j}$ contained in $T(A)(3)\otimes\CC$.
The complexification $h_{\CC}$ of the lattice isometry $h$ preserves
the quadric $Q=\{\o^{2}=0\}\subset\PP(T(A)(3)\otimes\CC)$ containing
the Hodge structures. That quadric is the same for $T(A)(3),T_{i}$
and $T_{j}$. For $\o\in Q$, the isometry $h$ induces an isomorphism
between the Hodge structures $(T_{i},\o)$ and $(T_{j},h_{\CC}(\o))$.

Suppose that for a general period $\o\in Q$, the Hodge structures
$(T_{i},\o)$ and $(T_{j},\o)$ are isomorphic. Then $(T_{j},\o)$
and $(T_{j},h_{\CC}(\o))$ are isomorphic: there exists an isometry
$g\in O(T_{j})$ such that $g_{\CC}(\o)=h_{\CC}(\o)$. Since $O(T_{j})$
is a countable set, there exists a $g\in O(T_{j})$ such that for
an infinite number of $\o$, one has $g_{\CC}(\o)=h_{\CC}(\o)$, and
therefore, in fact, $g_{\CC}(\o)=h_{\CC}(\o)$ for all $\o$, thus
$\forall\omega\in Q,\,\,h_{\CC}^{-1}g_{\CC}(\o)=\o.$ Since the projective
automorphism group preserving $Q$ acts faithfully on $Q$ , this
implies that $h=\pm g$, and $h$ is an isometry of $T_{j}$. This
is a contradiction, therefore for general $\o\in Q$, the Hodge structures
$(T_{j},\o)$ and $(T_{j},h_{\CC}(\o))$ are not isomorphic, and we
conclude that $(T_{j},\omega)$, $(T_{i},\omega)$ are not isomorphic. 
\end{proof}
Recall that from Propositions \ref{prop:0mod9TvTvpIsomeorphess} and
\ref{prop:6mod9TvTvpIsomeorphess}, for $k=0$ (respectively $6\mod9$),
we denote by $T_{1},T_{2},T_{3}$ (resp. $T_{1},T_{2}$) the over-lattices
of $T(A)(3)$ that are isometric to $T(X)$. Let be $t=3$ if $k=0\mod9$
and $t=2$ if $k=6\mod9$. Let us fix a general period $\o$. The
Hodge structure $(T(A)(3),\o)$ induces Hodge structures 
\[
(T_{j},\o),j=\{1,\dots,t\}
\]
and by Proposition \ref{prop:KummerNotIso}, these Hodge structures
are not isomorphic for general $\o$. By the subjectivity of the period
map, we obtain 
\begin{cor}
Under the above notations, there exist generalized Kummer surfaces
surfaces $X_{1},\dots,X_{t}$ such that $(T(X_{s}),\CC\o_{X_{s}})\simeq(T_{s},\CC\o)$
for $s\in\{1,...,t\}$, with $X_{s}=\Km(A_{s},G_{s})$, such that
$A_{1},...,A_{t}$ are Fourier-Mukai partners but such that $X_{1},\dots,X_{t}$
are not isomorphic. 
\end{cor}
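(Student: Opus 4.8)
The plan is to convert the lattice-theoretic data assembled above into geometry by means of the surjectivity of the period map, and then to read off the three required properties one at a time. First I would fix a period $\o\in Q\subset\PP(T(A)(3)\otimes\CC)$ that is general in the sense of Proposition \ref{prop:KummerNotIso}, so that the induced Hodge structures $(T_{s},\o)$ on the over-lattices $T_{1},T_{2},\dots$ isometric to $T(X)$ are pairwise non-isomorphic. Each pair $(T_{s},\o)$ is a weight-two Hodge structure of K3 type on a rank-$3$ lattice of signature $(2,1)$ isometric to $T(X)$, so by the surjectivity of the period map for K3 surfaces there is a K3 surface $X_{s}$ equipped with a Hodge isometry $(T(X_{s}),\CC\o_{X_{s}})\simeq(T_{s},\o)$.

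Next I would argue that each $X_{s}$ is actually a generalized Kummer surface. Since $T_{s}\simeq T(X)$, the orthogonal complement $\NS(X_{s})=T_{s}^{\perp}$ in the K3 lattice is isometric to $\NS(X)$, which carries the configuration of the $18$ exceptional curves together with $L_{X}$; for general $\o$ the surface $X_{s}$ has Picard number $19$, and the characterization of order-$3$ generalized Kummer surfaces from \cite{RS,Roulleau} then gives $X_{s}\simeq\Km_{3}(A_{s})$ for some abelian surface $A_{s}$ carrying an order-$3$ symplectic automorphism group $G_{s}$.

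The Fourier--Mukai partnership is where the earlier over-lattice computations become essential. Under the Hodge isometry above, the sub-lattice $\pi_{A_{s}*}(T(A_{s}))\simeq T(A_{s})(3)$ of $T(X_{s})$ corresponds to an index-$3$ sub-lattice $D_{s}$ of $T_{s}$, and by Proposition \ref{prop:PreservationOfT(A)(3)} this $D_{s}$ is the canonical copy of $T(A)(3)$ preserved by $O(T_{s})$. Because the over-lattices $T_{1},T_{2},\dots$ are mutually isomorphic (Propositions \ref{prop:6mod9TvTvpIsomeorphess} and \ref{prop:0mod9TvTvpIsomeorphess}, and Corollary \ref{cor:Unique-embedd-up-to-isom}), any isometry realising $T_{1}\simeq T_{s}$ restricts to an isometry of the common sub-lattice $T(A)(3)$, and hence carries the distinguished sub-lattice of $T_{1}$, which is $T(A)(3)$ itself, onto $D_{s}$; therefore $D_{s}$ equals the common $T(A)(3)$ for every $s$, endowed with the single fixed Hodge structure $(T(A)(3),\o)$. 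Consequently $(T(A_{s}),\CC\o_{A_{s}})\simeq(T(A),\CC\o)$ for all $s$, so the $A_{s}$ are Fourier--Mukai partners of one another.

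Finally, non-isomorphism of the $X_{s}$ is immediate from the Torelli theorem: an isomorphism $X_{i}\simeq X_{j}$ would induce a Hodge isometry $(T(X_{i}),\CC\o_{X_{i}})\simeq(T(X_{j}),\CC\o_{X_{j}})$, that is $(T_{i},\o)\simeq(T_{j},\o)$, contradicting Proposition \ref{prop:KummerNotIso} for general $\o$. I expect the main obstacle to lie in the middle steps: guaranteeing that the abstract K3 surface produced by the period map is genuinely a generalized Kummer surface and, above all, that the transcendental lattice of its associated abelian surface is the \emph{common} $T(A)(3)$ rather than some other index-$3$ sub-lattice. This is exactly the point at which the mutual isomorphism of the over-lattices (not merely the isometry of each $T_{s}$ with $T(X)$) is indispensable, and it is what makes the $A_{s}$ Fourier--Mukai partners while the surfaces $X_{s}$ remain distinct.
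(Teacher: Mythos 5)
Your proposal is correct and follows exactly the route the paper intends: the paper states this corollary without a separate proof, as an immediate consequence of the surjectivity of the period map, Proposition \ref{prop:KummerNotIso} (pairwise non-isomorphic Hodge structures $(T_s,\o)$ for general $\o$), and the uniqueness of the copy of $T(A)(3)$ inside each $T_s$ coming from Propositions \ref{prop:PreservationOfT(A)(3)}, \ref{prop:6mod9TvTvpIsomeorphess}, \ref{prop:0mod9TvTvpIsomeorphess} and Corollary \ref{cor:Unique-embedd-up-to-isom}. Your write-up supplies precisely these details (including the step, left implicit in the paper, that the surfaces produced by the period map are generalized Kummer surfaces with the correct N\'eron--Severi lattice), so it matches the paper's argument in substance.
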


\begin{rem}
We remark that one has an isomorphism of $\QQ$-Hodge structures $(T(X_{i})\otimes\QQ,\CC\o)\simeq(T(X_{j})\otimes\QQ,\CC\o)$,
and according to \cite{Nikulin2} it is algebraic, i.e. is induced
by a correspondence between $X_{i}$ and $X_{j}$. 
\end{rem}

\vspace{2mm}

\noindent Xavier Roulleau\\
 Université d'Angers, \\
 CNRS, LAREMA, SFR MATHSTIC, \\
 F-49000 Angers, France

\noindent xavier.roulleau@univ-angers.fr
\begin{verbatim*}
https://math.univ-angers.fr/membre/roulleau-xavier/
\end{verbatim*}
\vspace{2mm}

\noindent Alessandra Sarti\\
 Université de Poitiers\\
 Laboratoire de Mathématiques et Applications,\\
 UMR 7348 du CNRS, \\
 TSA 61125 \\
 11 bd Marie et Pierre Curie, \\
 86073 Poitiers Cedex 9, France

\noindent Alessandra.Sarti@math.univ-poitiers.fr 
\begin{verbatim}
http://www-math.sp2mi.univ-poitiers.fr/~sarti/
\end{verbatim}

\end{document}